\theoremstyle{plain}
\newtheorem{theorem}{Theorem}
\newtheorem{lemma}[theorem]{Lemma}
\newtheorem{proposition}[theorem]{Proposition}
\newtheorem{corollary}[theorem]{Corollary}
\newtheorem{conjecture}[theorem]{Conjecture}
\newtheorem{question}[theorem]{Question}
\newtheorem{problem}[theorem]{Problem}
\theoremstyle{definition}
\newtheorem{definition}[theorem]{Definition}
\numberwithin{equation}{section}
\newcommand{\forces}{\Vdash}
\newcommand{\embed}{{<\!\!\circ\;}}
\newcommand{\re}{{\upharpoonright}}
\newcommand{\A}{{\mathcal A}}
\newcommand{\B}{{\mathcal B}}
\newcommand{\M}{{\mathcal M}}
\newcommand{\N}{{\mathcal N}}
\renewcommand{\P}{{\mathcal P}}
\newcommand{\U}{{\mathcal U}}
\newcommand{\CC}{{\mathbb C}}
\newcommand{\PP}{{\mathbb P}}
\newcommand{\QQ}{{\mathbb Q}}
\newcommand{\RR}{{\mathbb R}}
\newcommand{\MM}{{\mathbb M}}
\newcommand{\bb}{{\mathfrak b}}
\newcommand{\cc}{{\mathfrak c}}
\newcommand{\dd}{{\mathfrak d}}
\newcommand{\jj}{{\mathfrak j}}
\newcommand{\ddd}{{\mathfrak{dd}}}
\newcommand{\rr}{{\mathfrak r}}
\newcommand{\rrr}{{\mathfrak{rr}}}
\newcommand{\sss}{{\mathfrak{s}}}
\newcommand{\pp}{{\mathfrak{p}}}
\newcommand{\cov}{{\mathsf{cov}}}
\newcommand{\non}{{\mathsf{non}}}
\newcommand{\cof}{{\mathsf{cof}}}
\newcommand{\all}{{\mathsf{all}}}
\newcommand{\osc}{{\mathsf{osc}}}
\newcommand{\rel}{{\mathsf{rel}}}
\newcommand{\Sym}{{\mathrm{Sym}}}
\newcommand{\sub}{\subseteq}
\newcommand{\sem}{\setminus}
\newcommand{\twoom}{2^\omega}
\newcommand{\omom}{\omega^\omega}
\newcommand{\omloms}{[\omega]^{<\omega}}
\newcommand{\omoms}{[\omega]^\omega}
\begin{document}

\title{Density cardinals}
\author{Christina Brech, J\"org Brendle, and M\'arcio Telles}
\thanks{The first author was partially supported by FAPESP grants (2016/25574-8 and 2023/12916-1).} 
\address{Departamento de Matemática, Instituto de Matemática e Estatística, Universidade de São Paulo,
Rua do Matão, 1010, 05508-090, São Paulo, Brazil}
\email{brech@ime.usp.br}
\thanks{The second author was partially supported by Grant-in-Aid for Scientific Research (C) 18K03398, Japan Society for the Promotion of Science.} 
\address{Graduate School of System Informatics, Kobe University,
Rokkodai 1-1, Nada, Kobe 657-8501, Japan}
\email{brendle@kobe-u.ac.jp}
\address{Departamento de Matemática, Faculdade de Formação de Professores, Universidade do Estado do Rio de Janeiro, Rua Doutor Francisco Portela, 1470, 24435-005, Rio de Janeiro, Brazil}
\email{marcio.telles@uerj.br}

\begin{abstract}
How many permutations are needed so that every infinite-coinfinite set of natural numbers with asymptotic density can be rearranged to no longer have the same density? We prove that the density number $\ddd$, which answers this question, is equal to the least size of a non-meager set of reals, $\non (\M)$. The same argument shows that a slight modification of the rearrangement number $\rrr$ of~\cite{BBBHHL20} is equal to $\non (\M)$, and similarly for a cardinal invariant related to large-scale topology introduced by Banakh~\cite{Ba23}, thus answering a question of the latter. We then consider variants of $\ddd$ given by restricting the possible densities of the original set and / or of the permuted set, providing lower and upper bounds for these cardinals and proving consistency of strict inequalities. We finally look at cardinals defined in terms of relative density and of asymptotic mean, and relate them to the rearrangement numbers of~\cite{BBBHHL20}.
\end{abstract}

\subjclass{Primary 03E17; Secondary 03E05, 03E35, 40A05}
\keywords{Asymptotic density, Conditionally convergent series, Rearrangement, Cardinal invariant, Lebesgue measure, Baire category, Forcing}
\maketitle



\section{Introduction}


Asymptotic density of sets of natural numbers plays an important role
in many areas of mathematics; it is also closely connected to
Lebesgue measure on the real numbers. For quite some time, density
has been investigated from the point of view of set theory: the
density ideal ${\mathcal Z}$ is one of the classical analytic P-ideals,
which have been characterized by Solecki~\cite{So99} using lower
semicontinuous submeasures, and its cardinal invariants as well
as those of the quotient algebra $\P (\omega) / {\mathcal Z}$
have been extensively studied (see e.g.~\cite{HH07} or~\cite{Ra20}).
More recently, variants of the splitting number and the reaping
number defined in terms of density have been introduced and compared
with other cardinal invariants of the continuum (\cite{BHKLS23},
see also~\cite{FKL} and~\cite{Va}). Here we look at density from
another point of view: our starting point is the simple observation,
inspired by Riemann's Rearrangement Theorem, that any infinite-coinfinite
set of natural numbers can be rearranged into a set without density
or with arbitrary density in the closed interval $[0,1]$.
The question of how many permutations are necessary to be able 
to rearrange every infinite-coinfinite set naturally leads
to cardinal invariants (which we will call {\em density numbers})
analogous to the rearrangement numbers studied in~\cite{BBBHHL20}
(see also~\cite{BBH19} for closely related work). We start with 
basic definitions.

For $A \sub \omega$ let 
\[ d_n (A) = { \frac{| A \cap n |}{n} } \]
and define the {\em lower density} 
\[  \underline{d} (A) = \liminf_{n \to \infty} d_n (A) \]
and the {\em upper density} 
\[ \overline{d} (A)= \limsup_{n \to \infty} d_n (A) \]
of $A$. If $\underline{d} (A) = \overline{d} (A)$ we call the common value $d(A)$ the {\em (asymptotic) density} of $A$.
Otherwise $A$ does not have (asymptotic) density. Notice that finite sets have asymptotic density equal to $0$ and cofinite sets have asymptotic density equal to $1$.
On the other hand, the density of infinite-coinfinite sets can be changed by permutations of $\omega$ as shown by the following simple result:
\begin{proposition}   \label{density-permutation}
    Given an infinite and coinfinite set $A$, we have:
    \begin{enumerate}
        \item For every $r \in [0,1]$, there is $\pi \in \Sym (\omega)$ such that $d(\pi[A]) = r$;
        \item There is $\pi \in \Sym (\omega)$ such that $\pi[A]$ doesn't admit asymptotic density.
    \end{enumerate}
\end{proposition}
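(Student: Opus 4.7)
The plan is to reduce both parts of the proposition to producing an appropriate ``target'' subset of $\omega$ and then invoking a simple gluing observation. Namely, if $A$ and $B$ are both infinite-coinfinite subsets of $\omega$, then $|A| = |B| = |\omega \sem A| = |\omega \sem B| = \aleph_0$, so one may pick any bijection $A \to B$ together with any bijection $\omega \sem A \to \omega \sem B$ and glue them into a permutation $\pi \in \Sym(\omega)$ with $\pi[A] = B$. Hence it suffices, for (1), to exhibit an infinite-coinfinite $B_r \sub \omega$ with $d(B_r) = r$ for every $r \in [0,1]$, and for (2), an infinite-coinfinite $B \sub \omega$ that fails to admit asymptotic density.

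For (1), I would handle the endpoints by taking $B_0 = \{2^n : n \in \omega\}$ and $B_1 = \omega \sem B_0$. For $r \in (0,1)$ I would set $B_r = \{\lceil n/r \rceil : n \geq 1\}$. Since $1/r > 1$, the sequence $(\lceil n/r \rceil)_n$ is strictly increasing; a routine counting argument gives $|B_r \cap N| = \lfloor r(N-1) \rfloor$, from which $d(B_r) = r$ follows. The same inequality $1/r > 1$ forces infinitely many of the gaps $\lceil (n+1)/r \rceil - \lceil n/r \rceil$ to exceed $1$, so $B_r$ is coinfinite.

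For (2), I would take $B = \bigcup_{n \in \omega} [2^{2n}, 2^{2n+1})$. The partial sums $|B \cap 2^{2n+1}| = \sum_{k=0}^{n} 4^k = (4^{n+1}-1)/3$ and $|B \cap 2^{2n+2}| = (4^{n+1}-1)/3$ then yield $d_{2^{2n+1}}(B) \to 2/3$ while $d_{2^{2n+2}}(B) \to 1/3$, so $\underline{d}(B) \ne \overline{d}(B)$; and $B$ is plainly infinite and coinfinite.

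The argument presents no real obstacle: once the gluing observation is in hand, (1) and (2) reduce to exhibiting explicit witnesses, and the only care required is elementary arithmetic to verify the asymptotic behavior of these witnesses. Even the endpoint cases of (1) could in principle be absorbed into a single construction, but splitting them off keeps the formulas clean.
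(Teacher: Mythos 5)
Your proof is correct. The key reduction — that for any two infinite-coinfinite $A,B \sub \omega$ there is a permutation $\pi$ with $\pi[A]=B$, obtained by gluing a bijection $A\to B$ with a bijection $\omega\sem A\to\omega\sem B$ — is exactly what makes the proposition immediate, and the paper itself leaves the proposition unproved as a ``simple result,'' so this is precisely the level of detail one would supply. Your explicit witnesses all check out: the counting $|B_r\cap N|=\lfloor r(N-1)\rfloor$ for the Beatty-type set $B_r=\{\lceil n/r\rceil : n\geq 1\}$ gives $d(B_r)=r$ (and $r<1$ already forces coinfiniteness), the endpoints $B_0=\{2^n\}$ and $B_1=\omega\sem B_0$ are standard, and the block set $B=\bigcup_n[2^{2n},2^{2n+1})$ has $d_{2^{2n+1}}(B)\to 2/3$ while $d_{2^{2n+2}}(B)\to 1/3$, so its density is undefined.
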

This is reminiscent of Riemann's Rearrangement Theorem:
\begin{theorem}[Riemann]
Given a conditionally convergent (c.c. for short) series $\sum_n a_n$, we have:
   \begin{enumerate}
       \item For every $r \in \RR \cup \{ - \infty, + \infty \}$, there is $\pi \in \Sym (\omega)$ such that $ \sum_n a_{\pi (n)} = r$;
       \item There is $\pi \in \Sym (\omega)$ such that $\sum_n a_{\pi(n)} $ diverges by oscillation.
   \end{enumerate}
\end{theorem}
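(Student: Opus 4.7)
The plan is to exploit the characteristic dichotomy of conditional convergence: writing $P = \{n : a_n \geq 0\}$ and $N = \{n : a_n < 0\}$, the subseries $\sum_{n \in P} a_n$ diverges to $+\infty$ while $\sum_{n \in N} a_n$ diverges to $-\infty$. Indeed, if both were finite, $\sum_n a_n$ would be absolutely convergent, and if exactly one were finite the original series could not converge at all. Combined with $a_n \to 0$ (forced by convergence of $\sum_n a_n$), this is all the ammunition that will be needed. Enumerate $P$ and $N$ in their natural increasing orderings as $p_0 < p_1 < \cdots$ and $q_0 < q_1 < \cdots$.

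For part (1) with $r \in \RR$, I would use the standard greedy rearrangement. Build $\pi \in \Sym(\omega)$ in stages: at each stage, if the current partial sum is $\leq r$, append the next unused $p_k$; otherwise append the next unused $q_k$. Because $\sum_k a_{p_k} = +\infty$ and $\sum_k a_{q_k} = -\infty$, the partial sum must cross $r$ infinitely often, so sign changes happen infinitely often and every $p_k$ and every $q_k$ is eventually enumerated, giving a genuine permutation of $\omega$. Between two consecutive crossings the partial sums overshoot or undershoot $r$ by at most the absolute value of the last term added; since $a_n \to 0$, these deviations vanish and the rearranged series converges to $r$.

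For $r \in \{+\infty, -\infty\}$ in part (1), I would replace the fixed target by an increasing sequence of targets. For $r = +\infty$: at stage $k$ append successive unused $p_k$'s until the partial sum exceeds $k$, then append a single unused $q_k$ (to make sure $N$ is exhausted), and iterate. The growth of the targets combined with $a_n \to 0$ forces the partial sums to diverge to $+\infty$, and the single forced negative at each stage guarantees all of $N$ is used. The case $r = -\infty$ is symmetric. For part (2), I would alternate target signs: push the partial sum above $+1$ by appending from $P$, then below $-1$ by appending from $N$, then again above $+1$, and so on. This yields $\limsup \geq 1$ and $\liminf \leq -1$, i.e. divergence by oscillation, while the explicit alternation ensures every index of $\omega$ is used exactly once.

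The main obstacle is essentially bookkeeping rather than analysis: one must verify in each construction (i) that the process never stalls, which is immediate from $\sum_P a_n = +\infty$ and $\sum_N a_n = -\infty$, and (ii) that both $P$ and $N$ are fully enumerated so that $\pi$ is a bijection of $\omega$. In the finite-target case (1) this exhaustion is automatic from the infinitely many forced sign changes; in the infinite-target case and in (2) it is hard-wired into the construction by forcing at least one step on the \emph{opposite} side at each oscillation. Once these are in place, convergence to the prescribed $r$ (or controlled oscillation in (2)) follows cleanly from $a_n \to 0$.
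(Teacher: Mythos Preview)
Your proposal is correct and is the standard classical argument. Note, however, that the paper does not actually prove this theorem: it is stated as Riemann's classical result, without proof, purely to motivate the definition of the rearrangement number $\rrr$ and the analogy with density. So there is no paper proof to compare against; your sketch is exactly the textbook proof one would expect.
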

The latter means that $- \infty \leq \liminf_k \sum_{n=0}^k a_n < \limsup_k \sum_{n=0}^k a_n \leq + \infty$. Riemann's Theorem motivates the
following definition from~\cite{BBBHHL20}.
\begin{definition} 
The {\em rearrangement number} $\rrr$ is the smallest cardinality of a family $\Pi \sub \Sym (\omega)$ such that for every c.c. series $\sum_n a_n$ there is 
$\pi \in \Pi$ such that $\sum_n a_{\pi (n)} \neq \sum_n a_n$ (where we allow the possibility that $\sum_n a_{\pi (n)}$ converges to 
$\pm \infty$ or diverges by oscillation).
\end{definition}
By Proposition~\ref{density-permutation} we may define analogously:
\begin{definition} 
The {\em (asymptotic) density number} $\ddd$ is the smallest cardinality of a family $\Pi \sub \Sym (\omega)$ such that for every infinite-coinfinite set $A \sub \omega$ 
with asymptotic density there is 
$\pi \in \Pi$ such that $d(\pi [A]) \neq d (A)$, i.e., either $\pi [A]$ doesn't admit asymptotic density or $\pi[A]$ has asymptotic density distinct from $d (A)$.
\end{definition}
One may wonder why we require $A$ to have asymptotic density though this is irrelevant in the Proposition. This is because we want to make the definition analogous to $\rrr$.
Alternatively, Riemann's Theorem can be reformulated as follows. Call a series $\sum_n a_n$ {\em potentially conditionally convergent} ({\em p.c.c.} for short)~\cite[Definition 30]{BBBHHL20}
if there is a permutation $\pi \in \Sym (\omega)$ such that $\sum_n a_{\pi (n)}$ is c.c. Notice that $\sum_n a_n$ is p.c.c. iff $a_n \to 0$, $\sum \{ a_n : a_n \geq 0 \}
= + \infty$, and $\sum \{ a_n : a_n \leq 0 \} = - \infty$. The rephrased version of Riemann's Theorem then says that given any p.c.c. $\sum_n a_n$ and any 
$r \in \RR \cup \{ - \infty, + \infty \}$ there is $\pi \in \Sym (\omega)$ such that $\sum_n a_{\pi (n)} = r$ and there is also $\pi \in \Sym (\omega)$ such that
$\sum_n a_{\pi (n)}$ diverges by oscillation.

Let us first see that $\ddd$ is indeed an uncountable cardinal.

\begin{lemma}
Let $(a_n : n \in \omega)$ be a sequence in $\omega$ with the property that for some $n_0 \in \omega$ and every $n \geq n_0$, $a_{n+1} -a_n > 2^n$. 
Then $A=\{a_n: n\in \omega \}$ is an infinite and coinfinite subset of $\omega$ such that $d(A)=0$. 
\end{lemma}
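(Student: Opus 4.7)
The plan is to verify the three claims in turn: that $A$ is infinite, that $A$ is coinfinite, and that $d(A) = 0$. The common engine is that the hypothesis $a_{n+1} - a_n > 2^n$ (for $n \geq n_0$) forces the sequence to be strictly increasing on a tail and, more strongly, to grow at least exponentially, so that $|A \cap m|$ is only logarithmic in $m$.

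Infinitude is immediate: since $a_{n+1} - a_n > 2^n \geq 1$ for $n \geq n_0$, the sequence $(a_n)_{n \geq n_0}$ is strictly increasing, hence injective, hence $A$ is infinite. For coinfinitude, the gap $a_{n+1} - a_n - 1 > 2^n - 1$ tends to infinity, so the complement of $A$ contains arbitrarily long intervals and in particular is infinite.

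For density zero, telescoping gives, for $n > n_0$,
\[
a_n \;>\; a_{n_0} + \sum_{k=n_0}^{n-1} 2^k \;=\; a_{n_0} + 2^n - 2^{n_0} \;\geq\; 2^{n-1}
\]
once $n$ is large enough (say $n \geq n_1$ for some $n_1 \geq n_0$). Given large $m$, let $k = k(m)$ be the largest index with $a_k < m$; since $(a_n)_{n \geq n_0}$ is strictly increasing, one has $|A \cap m| \leq k+1$. From $2^{k-1} < a_k < m$ we get $k < \log_2 (m) + 1$, so
\[
\frac{|A \cap m|}{m} \;\leq\; \frac{k+1}{m} \;<\; \frac{\log_2 (m) + 2}{m} \;\xrightarrow[m \to \infty]{}\; 0,
\]
which shows $d(A) = 0$.

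There is no real obstacle here; the only step that requires any care is the telescoping estimate giving $a_n > 2^{n-1}$ eventually, from which the logarithmic bound on $|A \cap m|$ follows immediately.
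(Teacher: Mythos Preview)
Your proof is correct and follows essentially the same approach as the paper's: both arguments telescope the hypothesis to obtain a lower bound of the form $a_n \gtrsim 2^{n}$ for large $n$, and then deduce that $|A \cap m|$ grows only logarithmically in $m$. The paper phrases the final estimate by covering $k$ with dyadic intervals $[a_{n_0} + 2^{n-1}, a_{n_0} + 2^n]$ and bounding $|A \cap k| \leq n+1$ on each, whereas you invert directly to $k(m) < \log_2(m)+1$; these are cosmetic variants of the same computation.
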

\begin{proof}
The property of the sequence immediately guarantees that $A$ is infinite and coinfinite. As for the asymptotic density, 
first notice that a simple inductive argument shows that for $n \geq n_0$, $a_{n+1} > a_{n_0} + 2^n$. 
Hence, for $a_{n_0} + 2^{n-1} \leq k \leq a_{n_0} + 2^n$, we have that $A \cap k \subseteq \{a_i:  i \leq n\}$, so 
$$\frac{|A \cap k  |}{k} \leq \frac{n + 1}{k} \leq \frac{n + 1}{a_{n_0}+2^{n-1}},$$
which converges to $0$, when $k$ goes to infinity.
\end{proof}

\begin{proposition}
    $\aleph_1 \leq \mathfrak{dd}$. 
\end{proposition}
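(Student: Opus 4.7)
The plan is to show that any countable family of permutations fails to witness the definition of $\ddd$. Given a sequence $(\pi_n : n \in \omega)$ of permutations of $\omega$, I will construct an infinite-coinfinite set $A \sub \omega$ with $d(A) = 0$ such that $d(\pi_n[A]) = 0$ for every $n$, so that none of the $\pi_n$ changes the density of $A$.

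The construction is by recursion. I will pick elements $a_0 < a_1 < a_2 < \cdots$ and set $A = \{a_k : k \in \omega\}$. At stage $k$, having fixed $a_0, \dots, a_{k-1}$, let $B_{n,k} = \max \{\pi_n(a_j) : j < k\}$ for each $n \leq k$ (with $B_{n,0} = 0$). I choose $a_k$ large enough to satisfy simultaneously:
\begin{enumerate}
\item $a_k > a_{k-1} + 2^{k}$, and
\item for every $n \leq k$, $\pi_n(a_k) > B_{n,k} + 2^{k}$.
\end{enumerate}
Such an $a_k$ exists because each $\pi_n$ is a bijection, so for every threshold $T$ only finitely many $m \in \omega$ have $\pi_n(m) \leq T$; hence outside a finite set all the conditions above are met.

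By condition (1) and the preceding lemma, $A$ is infinite, coinfinite, and has $d(A) = 0$. For each fixed $n \in \omega$, condition (2) applied at all stages $k \geq n$ shows that the sequence $(\pi_n(a_k))_{k \geq n}$ is strictly increasing with consecutive gaps larger than $2^k$; thus the tail $\{\pi_n(a_k) : k \geq n\}$ of $\pi_n[A]$ satisfies the hypothesis of the preceding lemma, so it has density $0$. Since $\pi_n[A]$ differs from this tail by at most finitely many points, it also has density $0$. Therefore $d(\pi_n[A]) = 0 = d(A)$ for every $n$, showing that $\{\pi_n : n \in \omega\}$ is not a witness for $\ddd$, and hence $\ddd \geq \aleph_1$.

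The main subtlety is ensuring that condition (2) can be satisfied for all $n \leq k$ at once; this relies crucially on $\pi_n$ being a bijection (so that $\pi_n^{-1}[\{0,1,\ldots,T\}]$ is finite for each $T$), so that only a cofinite set of candidates for $a_k$ is excluded by each finitary requirement. No further ingredients are needed.
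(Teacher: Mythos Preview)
Your proof is correct and follows essentially the same approach as the paper's: both construct $A=\{a_k:k\in\omega\}$ recursively so that for each permutation $\pi_n$ and all sufficiently large $k$ the gaps $\pi_n(a_{k+1})-\pi_n(a_k)$ exceed $2^k$, and then invoke the preceding lemma. The only cosmetic differences are that the paper arranges condition (1) by assuming without loss of generality that $\pi_0$ is the identity (so it is absorbed into condition (2)), and the paper uses $\pi_n(a_{k-1})$ rather than your $B_{n,k}=\max\{\pi_n(a_j):j<k\}$ as the lower bound; your choice of the maximum is slightly cleaner since it makes explicit that the tail of $(\pi_n(a_k))_k$ is increasing.
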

\begin{proof}
Let $\{\pi_i: i \in \omega \}$ be a countable subset of $\Sym (\omega) $ and, without loss of generality, assume that $\pi_0$ is the identity map. 
We construct, inductively, a sequence of integers $(a_n :  n \in \omega)$ such that for every $i \leq n$, $\pi_i(a_{n+1}) - \pi_i(a_n) \geq 2^n$. 

Let $a_0 =1$ and given $a_n$, let $a_{n+1}>a_n$ be such that $\pi_i(a_{n+1}) \geq \pi_i(a_n)+ 2^n$ for all $i \leq n$. 
This can be done since $\lim_{k \rightarrow \infty} \pi_i(k) = \infty$ for every $i \in \omega$.

Let $A=\{a_n: n \in \omega \}$; then it follows from the previous lemma that $A$ is an infinite and coinfinite subset of $\omega$ such that $d(\pi_i[A])= d(A)=0$ for every $i \in \omega$.
\end{proof}

In the next section, we shall obtain a much better result (Theorem~\ref{density-nonmeager}): Tukey connections are used to establish that $\ddd$ equals $\non (\M)$, the smallest cardinality  of a nonmeager set of reals. We use similar methods to prove that a variant of $\rrr$ and a cardinal invariant related to large-scale topology are also equal to $\non (\M)$. In Section 3 we introduce variants of $\ddd$ by imposing restrictions on the original density of a set and/or the density of its image after a permutation. Interesting cases occur when these densities are imposed to be in one of the following sets: $\{0,1\}$, $(0,1)$, $[0,1]$ or $\{\mathrm{osc}\}$, where the last case means that the corresponding density does not exist. We then give ZFC lower and upper bounds for these cardinals in terms of  well-known cardinals: the already mentioned $\mathrm{non}(\mathcal{M})$; the bounding number $\mathfrak{b}$; the reaping number $\mathfrak{r}$; and the covering numbers of the meager and the null ideals, $\mathrm{cov}(\mathcal{M})$ and $\mathrm{cov}(\mathcal{N})$. This allows us to position our cardinals in Cicho\'n's diagram. Next, Section 4 provides some consistency results regarding strict inequalities. Finally, in Section 5, we consider variations of the rearrangement and the density cardinals defined within the framework of Section 3 and try to compare cardinals with analogous definitions. The similarity to rearrangement naturally leads us to consider relative density and the asymptotic mean. We also discuss the limitations of these analogies and pose several open questions.


Our notation is standard. For cardinal invariants not defined here we refer to~\cite{Bl10}. Only Section 4 requires knowledge of the forcing method. For forcing, and its connection to cardinal invariants, see~\cite{BJ95} and~\cite{Ha17}.



\section{The density number and the uniformity of the meager ideal}


Since the definitions of $\rrr$ and $\ddd$ are quite similar we expect that there is a close relationship between the cardinals. We shall see in this section
(Corollary~\ref{density-nonmeager-cor}) that $\rrr \leq \ddd$ while it remains open whether equality holds.

It is convenient to formulate order relations between cardinals in terms of {\em Tukey connections} because this will streamline the arguments and will
automatically give results about dual cardinals. Let us quickly review this language (\cite{V92}, \cite[Section 4]{Bl10}). 
Consider {\em relational systems} $\A = ( A_-, A_+, A)$ such that $A \sub A_- \times A_+$
is a relation, for every $a_- \in A_-$ there is $a_+ \in A_+$ with $a_- A a_+$, and for every $a_+ \in A_+$ there is $a_- \in A_-$ such that $a_- A a_+$ fails.
For such $\A$ there is the {\em dual} relational system $\A^\perp = (A_+ , A_- , \neg \breve A)$ where $\breve A$ is the converse of $A$, that is, $a_+ \neg\breve A a_-$ if
$\neg (a_- A a_+)$. With each relational system $\A = (A_-,A_+,A)$ we associate two cardinals: the {\em dominating number} $\dd (A_-,A_+,A)$ is the smallest
size of a family $C_+ \sub A_+$ such that for all $a_- \in A_-$ there is $a_+ \in C_+$ with $a_- A a_+$; the {\em unbounding number} $\bb (A_-, A_+ ,A)$
is the least cardinality of a family $C_- \sub A_-$ such that for all $a_+ \in A_+$ there is $a_- \in C_-$ such that $a_- A a_+$ fails. Notice that
$\dd (\A^\perp) = \bb (\A)$ and $\bb (\A^\perp) = \dd (\A)$. 

Given two relational systems $\A = (A_-, A_+, A)$ and $\B = (B_- , B_+ , B)$, $\A$ is {\em Tukey reducible} to $\B$ ($\A \leq_T B$ in symbols) if there exist
functions $\varphi_- : A_- \to B_-$ and $\varphi_+ : B_+ \to A_+$ such that $\varphi_- (a_-) B b_+$ implies $a_- A \varphi_+ (b_+)$ for all
$a_- \in A_-$ and $b_+ \in B_+$. $\A$ and $\B$ are {\em Tukey equivalent} ($\A \equiv_T B$ in symbols) if $\A \leq_T \B$ and $\B \leq_T \A$ both hold.
$\A \leq_T \B$ is equivalent to $\B^\perp \leq_T \A^\perp$. Either implies that $\dd (\A) \leq \dd(\B)$ and $\bb (\A) \geq \bb (\B)$.
Conversely, if $\dd (\A) > \dd(\B)$ is consistent then $\A \leq_T \B$ consistently fails. Note, however, that all Tukey connections we shall exhibit are Borel
and thus absolute. Hence, for Borel Tukey reducibilities, the consistency of $\dd (\A) > \dd (\B)$ means that $\A \leq_T \B$ fails in ZFC.

The relational system relevant for the density number is $(D , \Sym (\omega), R)$ where $D$ is the collection of infinite-coinfinite sets with density
and $R$ is given by $x R \pi$ if $d (\pi [x])  \neq d(x)$ for $x \in D$ and $\pi \in \Sym (\omega)$ (where we allow the possibility that $d (\pi [x])$ is undefined).
Clearly $\ddd = \dd (D,\Sym(\omega),R)$. Denote by $\ddd^\perp = \bb (D,\Sym (\omega), R)$ the dual cardinal, that is, the least size of a family $D_0$
of infinite-coinfinite sets with density such that for all $\pi \in \Sym (\omega)$ there is $x \in D_0$ such that $d (\pi [x]) = d(x)$. 

Let $\M$ and $\N$ be the meager and null ideals on the Cantor space $\twoom$, respectively. Consider the relational system $(\M, \twoom, \not\ni)$.
Then $\dd (\M, \twoom, \not\ni) = \non (\M)$ is the uniformity of the meager ideal and $\bb (\M, \twoom, \not\ni) = \cov (\M)$ is the covering number of the meager ideal.

We will also need the following relational system. Let $h \in \omom$ be a function with $|h (n)| \geq 1$ for all $n$. A function $\phi : \omega \to \omloms$
is called an {\em $h$-slalom} if $|\phi (n)| \leq h(n)$ for all $n$. Let $\Phi_h$ be the collection of all $h$-slaloms. Consider the triple
$(\omom, \Phi_h, \in^\infty)$ where $f \in^\infty \phi$ if $f(n) \in \phi(n)$ holds for infinitely many $n$, for $f \in \omom$ and $\phi \in \Phi_h$.
The connection between the invariants of $(\omom, \Phi_h, \in^\infty)$ and the invariants of $\M$ is established by the following classical
result of Bartoszy\'nski:
\begin{theorem}[Bartoszy\'nski {\cite{Ba87}, \cite[Lemmas 2.4.2 and 2.4.8]{BJ95}}]   \label{bartoszynski-characterization}
For any $h \in\omom$ satisfying $|h (n) | \geq 1$ for all $n \in \omega$ we have that
$\non (\M) ={ \dd (\omom, \Phi_h, \in^\infty)}$ and $\cov (\M) = \bb (\omom, \Phi_h, \in^\infty)$.
\end{theorem}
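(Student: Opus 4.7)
The plan is to prove the Tukey equivalence
$(\omom, \Phi_h, \in^\infty) \equiv_T (\M, \twoom, \not\ni)$,
from which both equalities drop out at once by taking the dominating and unbounding numbers of the respective relational systems (the dual pair $\non(\M), \cov(\M)$ is produced automatically from the same Tukey maps). My strategy is to first handle one convenient choice of $h$ via a combinatorial characterization of meager sets, and then to argue that the left-hand Tukey class does not depend on the particular $h$ satisfying $|h(n)| \geq 1$ for all $n$.

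For the core equivalence, I would rely on Bartoszy\'nski's chopped-reals characterization: $F \sub \twoom$ is non-meager iff for every partition $\bar I = (I_n)$ of $\omega$ into consecutive finite intervals and every $y \in \twoom$ there is $x \in F$ with $x \re I_n = y \re I_n$ for infinitely many $n$. This is a standard Baire-category calculation, observing that each nowhere dense set is dominated by the set of reals which avoid some chopped real in the above sense. With this reformulation in hand, I would fix a rapidly growing $h$, say $h(n) = 2^n$, and a partition $\bar I$ of $\omega$ into intervals long enough to code the necessary data. Each $x \in \twoom$ is then coded as an $h$-slalom $\phi_x$ whose $n$-th value enumerates the finitely many binary strings compatible with $x \re I_n$, and each $f \in \omom$ is coded as a chopped real $(y_f, \bar I_f)$. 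The codings are arranged so that $f \in^\infty \phi_x$ is equivalent to $y_f$ matching $x$ on infinitely many intervals of $\bar I_f$; this yields Tukey maps in both directions between the slalom system and the meager system.

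Finally, I would argue that the Tukey class of $(\omom, \Phi_h, \in^\infty)$ does not depend on the choice of $h$. If $h \leq h'$ pointwise then $\Phi_h \sub \Phi_{h'}$ gives one Tukey direction via the identity. For the other direction, given an $h'$-slalom $\phi'$, I would partition $\omega$ into consecutive blocks $J_n$ chosen so that $\prod_{k \in J_n} h(k) \geq h'(n)$ (possible since $|h(n)| \geq 1$), fix a bijective coding of $\phi'(n)$ as a subset of $\prod_{k \in J_n} \{0, \ldots, h(k) - 1\}$, and define an $h$-slalom by coordinate projections; the companion map on $\omom$ sends $f$ to its block-code. The hard part throughout is orchestrating all these codings so that the $\in^\infty$ relations on the two sides align exactly under the Tukey reductions, but once this bookkeeping is in place the equivalence is immediate, and both $\non(\M) = \dd(\omom, \Phi_h, \in^\infty)$ and $\cov(\M) = \bb(\omom, \Phi_h, \in^\infty)$ follow at once.
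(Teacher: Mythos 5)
Your central strategy is to prove the Tukey equivalence $(\omom, \Phi_h, \in^\infty) \equiv_T (\M, \twoom, \not\ni)$ and read off both cardinal equalities. This is a genuine gap: that Tukey equivalence is \emph{false}, and the paper explicitly says so in the paragraph immediately following the theorem statement. The reduction $(\omom,\Phi_h,\in^\infty) \leq_T (\M,\twoom, \not\ni)$ is correct and is indeed the easy direction, but the converse $(\M,\twoom,\not\ni) \leq_T (\omom,\Phi_h,\in^\infty)$ fails. Since both reductions are Borel, the failure is a ZFC fact, and it follows from Zapletal's result~\cite{Za14} that there is a forcing which adds an infinitely-often-equal real (the generic object naturally associated with the slalom system) without adding a Cohen real (the generic for the meager system). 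In Tukey language: a morphism from the meager system to the slalom system would force every half-Cohen real to already be Cohen, which is false.

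The concrete place your argument would break is in the reduction from the meager system to the slalom system. The chopped-real characterization of non-meagerness that you invoke quantifies over \emph{all} interval partitions $\bar I$ and all $y \in \twoom$: a set is non-meager iff it meets the ``matching on infinitely many blocks of $\bar I$'' test for \emph{every} $\bar I$. By contrast, the slalom system $(\omom, \Phi_h, \in^\infty)$ implicitly fixes a single coordinate structure once and for all. When you try to build $\varphi_- : \M \to \omom$ and $\varphi_+ : \Phi_h \to \twoom$, the meager set handed to $\varphi_-$ carries its own interval partition, and there is no uniform way to collapse the two layers of quantification (over partitions and over reals matching within a partition) into a single slalom catching. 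The standard argument circumvents this by Tukey-reducing $(\M,\twoom,\not\ni)$ to a \emph{sequential composition} of two copies of $(\omom,\Phi_h,\in^\infty)$: one copy handles the interval partition, the other handles the matching real. Sequential composition preserves dominating and unbounding numbers (since $\dd$ and $\bb$ of a composition are the same as those of the factors when the factors coincide), which is why the cardinal equalities $\non(\M) = \dd(\omom, \Phi_h, \in^\infty)$ and $\cov(\M) = \bb(\omom, \Phi_h, \in^\infty)$ still hold even though the Tukey equivalence does not. Your block-coding argument for independence of $h$ is sound and is essentially the right way to handle that part; it is only the claimed two-sided Tukey equivalence with the meager ideal that cannot be salvaged as stated.
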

However, and this is the tricky part, $(\omom,\Phi_h,\in^\infty)$ and $(\M,\twoom,\not\ni)$ are not Tukey equivalent. While $(\omom,\Phi_h,\in^\infty) \leq_T
(\M,\twoom, \not\ni)$ is easy to see (see, again, \cite{Ba87} or \cite[Lemmas 2.4.2 and 2.4.8]{BJ95}), the converse fails. (This follows from Zapletal's result~\cite{Za14}
saying that there is a forcing adding a half-Cohen real (i.e., an infinitely often equal real) without adding a Cohen real.) But $(\M,\twoom,\not\ni)$ is Tukey
reducible to a sequential composition of $(\omom,\Phi_h,\in^\infty)$ (\cite{Ba87} or \cite[Lemmas 2.4.2 and 2.4.8]{BJ95}, see also~\cite[Theorem 5.9]{Bl10} for the special
case $h$ is the constant function with value 1).

In~\cite[Theorems 7, 8, and 11]{BBBHHL20}, $\max\{\cov (\N), \bb\} \leq \rrr \leq \non (\M)$ was proved, and the same inequalities hold for $\ddd$ instead of $\rrr$.
There is, however, a better result:

\begin{theorem} $(\omom,\Phi_h,\in^\infty) \leq_T (D, \Sym (\omega), R) \leq_T (\M, \twoom, \not\ni)$ for any $h \in\omom$ growing fast enough. \label{density-nonmeager}
\end{theorem}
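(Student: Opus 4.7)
The proof splits into two Tukey reductions, which I would handle separately. The right-hand one is a variant of the proof of $\rrr \le \non (\M)$ in \cite[Theorem 11]{BBBHHL20}, while the left-hand one is the quantitative heart of the theorem and requires a Bartoszy\'nski-style pseudorandom construction.

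\emph{The reduction $(D, \Sym (\omega), R) \le_T (\M, \twoom, \not\ni)$.} Fix a partition $\omega = \bigsqcup_n I_n$ with $I_n = [M_n, M_{n+1})$ and $M_n/M_{n+1} \to 0$, and define $\varphi_+ : \twoom \to \Sym (\omega)$, $z \mapsto \pi_z$, by letting disjoint finite segments of $z$ independently specify each block-permutation $\pi_z \re I_n$ together with an auxiliary cross-block swap between $I_n$ and $I_{n+1}$; enough bits are assigned so that every permutation and swap pattern can be realised. For $x \in D$, set $\varphi_-(x) := M_x := \{z : d(\pi_z[x]) = d(x)\}$. To show $M_x \in \M$, I would verify that for each $\epsilon > 0$ the set $\{z : \exists k,\ |d_k(\pi_z[x]) - d(x)| > \epsilon\}$ is dense open: given any cylinder $[s]$ and any $n$ large enough that the coding bits for block $I_n$ lie past $|s|$, set those bits to front-pack $x \cap I_n$ into $[M_n, M_n + |x \cap I_n|)$. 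A direct calculation then shows $d_{M_n + |x \cap I_n|}(\pi_z[x]) \to 1 \ne d(x)$ along the packed subsequence when $d(x) \in (0,1)$, and the boundary cases $d(x) \in \{0,1\}$ invoke the cross-block swaps to concentrate the sparser of $x, \omega \sem x$ into an initial segment. Intersecting over $\epsilon \to 0$ places $M_x$ inside a meager set.

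\emph{The reduction $(\omom, \Phi_h, \in^\infty) \le_T (D, \Sym (\omega), R)$.} Fix a partition $\omega = \bigsqcup_n I_n$ with $K_n := |I_n|$ satisfying $K_n^c \le h(n)$ for a suitable absolute constant $c$ (this is what ``fast enough'' means). For each $n$, construct by the probabilistic method a family $\{A_n^j : j \in \omega\}$ of $\lfloor K_n/2 \rfloor$-subsets of $I_n$ such that for every $E \sub I_n$,
\[
\bigl|\bigl\{ j : \bigl||A_n^j \cap E| - |E|/2\bigr| > \epsilon_n \cdot \max(|E|, L_n) \bigr\}\bigr| \le \frac{h(n)}{K_n + 1},
\]
with $\epsilon_n \to 0$ and $L_n$ a sub-$K_n$ threshold. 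Hoeffding's inequality for the hypergeometric distribution controls the bad probability for each fixed $E$, and the union bound over the $\le 2^{K_n}$ queries is afforded by the polynomial gap between $K_n$ and $h(n)$. Set $\varphi_-(f) := x_f := \bigcup_n A_n^{f(n)}$, which is infinite-coinfinite with $d(x_f) = 1/2$, and
\[
\varphi_+(\pi)(n) := \bigl\{ j : A_n^j \text{ is bad for } \pi^{-1}[M_k] \cap I_n \text{ for some } k \in \omega \bigr\}.
\]
Because the sets $E_n^k := \pi^{-1}[M_k] \cap I_n$ are monotone increasing in $k$, they assume at most $K_n + 1$ distinct values; hence $|\varphi_+(\pi)(n)| \le (K_n+1) \cdot h(n)/(K_n+1) = h(n)$ and $\varphi_+(\pi) \in \Phi_h$.

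\emph{Verification and main obstacle.} For $f$ eventually outside $\varphi_+(\pi)$,
\[
|x_f \cap \pi^{-1}[M_k]| = \sum_m |A_m^{f(m)} \cap E_m^k|
\]
differs from $\sum_m |E_m^k|/2 = M_k/2$ by at most $\sum_m \epsilon_m \max(|E_m^k|, L_m)$; this is $o(M_k)$ because $\epsilon_m \to 0$ combined with $\sum_m |E_m^k| = M_k$ yields $\sum_m \epsilon_m |E_m^k| = o(M_k)$ by a standard weighted-average argument, and $L_m$ is chosen so that $\sum_m \epsilon_m L_m$ grows sub-linearly in $M_k$. Hence $d_{M_k}(\pi[x_f]) \to 1/2 = d(x_f)$, and interpolation between consecutive block boundaries using $K_n/M_{n+1} \to 0$ extends convergence to all $k$. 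The principal obstacle is the simultaneous calibration of $K_n, h(n), L_n, \epsilon_n$ so that (i) the pseudorandom family exists via the union bound over $2^{K_n}$ queries, (ii) the per-query bad budget $h(n)/(K_n+1)$ is achievable, and (iii) the aggregated error stays $o(M_k)$ uniformly in $\pi$; this three-way balance is the quantitative content of ``$h$ growing fast enough''.
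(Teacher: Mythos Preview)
Your right-hand reduction is over-engineered and shaky in the boundary cases. The paper simply takes $\varphi_+$ to be the identity (working with $\M$ on $\Sym(\omega)$ instead of $\twoom$) and sets $\varphi_-(x)$ equal to the set of $\pi$ for which $\underline d(\pi[x])>0$ or $\overline d(\pi[x])<1$; a direct argument shows this is meager, and $\pi\notin\varphi_-(x)$ immediately gives $d(\pi[x])=\osc$. Your block-encoding $z\mapsto\pi_z$ is unnecessary, and your treatment of $d(x)\in\{0,1\}$ via ``cross-block swaps between $I_n$ and $I_{n+1}$'' is not convincing: if $x$ has one element per block, any permutation built from within-block moves plus adjacent-block swaps leaves $|\pi_z[x]\cap M_n|$ within a bounded constant of $|x\cap M_n|$, so $\pi_z[x]$ still has density $0$.

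The left-hand reduction has a genuine gap, and the obstacle you flag as ``principal'' is in fact fatal. Take any $\pi$ for which, at infinitely many $m$, a positive proportion of the blocks $n$ with $E_n^m\neq\emptyset$ satisfy $|E_n^m|=1$ (such $\pi$ are easy to build: let $\pi^{-1}$ keep entering fresh blocks faster than it completes old ones). For a singleton $E=\{e\}\sub I_n$ and any half-subset $A_n^j$, one has $\bigl||A_n^j\cap E|-\tfrac12\bigr|=\tfrac12$ identically in $j$. Hence either $\epsilon_nL_n<\tfrac12$, in which case \emph{every} $j$ is bad for this $E$ and $|\varphi_+(\pi)(n)|=\infty$, destroying the slalom bound; or $\epsilon_nL_n\geq\tfrac12$, in which case no $j$ is bad, $\varphi_+(\pi)(n)$ is empty on those blocks, and $f$ may freely choose $f(n)=j$ with $e\in A_n^j$. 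Doing so on all singleton blocks drives $|\pi[x_f]\cap m|/m$ to a value bounded away from $\tfrac12$, so $\varphi_-(f)\,R\,\pi$ holds while $f\in^\infty\varphi_+(\pi)$ fails. No calibration of $K_n,L_n,\epsilon_n,h(n)$ avoids this dichotomy, because the $\tfrac12$ error on singletons is intrinsic, not a Hoeffding artefact.

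The paper sidesteps all of this by working at density $0$ rather than $\tfrac12$: set $\varphi_+(\pi)(n)=2^n\cup\{\pi^{-1}(k):k\leq n\}$ (so $h(n)=2^n+n+1$ suffices) and let $\varphi_-(g)=\{a_n^g\}$ be the sparse set with $a_{n+1}^g=g(a_n^g)$. If $g$ eventually misses $\varphi_+(\pi)$ then $\pi(a_{n+1}^g)>a_n^g$ for large $n$, so $\pi[\varphi_-(g)]$ is still sparse enough to have density $0$. The point is that a density-$0$ set can have its density changed only by a permutation that pushes infinitely many of its elements down below earlier ones, and this is exactly what the slalom detects; no pseudorandomness is needed.
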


\begin{corollary} $\ddd = \non (\M)$ and $\ddd^\perp = \cov (\M)$. A fortiori, $\rrr \leq \ddd$ holds. \label{density-nonmeager-cor}
\end{corollary}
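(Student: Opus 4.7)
The plan is to prove the two Tukey reductions separately; the upper bound is substantially easier.

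\textbf{Upper bound, $(D, \Sym(\omega), R) \leq_T (\M, \twoom, \not\ni)$.} The key claim is that for every infinite-coinfinite $A \subseteq \omega$, the set $E_A = \{\pi \in \Sym(\omega) : d(\pi[A]) = d(A)\}$ is meager in the natural Polish topology on $\Sym(\omega)$. For each rational $\epsilon > 0$ and each $M \in \omega$, the set $U_{M,\epsilon} = \{\pi : |\pi[A] \cap N|/N > d(A) + \epsilon \text{ for some } N \geq M\}$ is open (depending only on a finite initial segment of $\pi$) and dense: given any finite partial permutation $\sigma$, pick $N$ very large and extend $\sigma$ by mapping sufficiently many unused elements of $A$ onto the missing points of $[0, N)$, thereby forcing $\pi^{-1}([0, N)) \subseteq A$ and $|\pi[A] \cap N|/N = 1$. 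Intersecting over $M$ and $\epsilon$ makes the complement of $E_A$ comeager. Transporting via a standard meager-preserving Borel isomorphism $\Psi \colon \twoom \to \Sym(\omega)$ between these Polish spaces, setting $\varphi_-(A) = \Psi^{-1}(E_A) \in \M$ and $\varphi_+(y) = \Psi(y)$ yields the Tukey morphism.

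\textbf{Lower bound, $(\omom, \Phi_h, \in^\infty) \leq_T (D, \Sym(\omega), R)$.} Fix $h$ growing fast enough. I would partition $\omega$ into consecutive intervals $I_n = [L_n, L_{n+1})$ of lengths $l_n$ calibrated to $h$, with $l_n \to \infty$ and $l_n/L_n \to 0$. Inside each $I_n$ I would select $h(n)$ subsets $B_{n,0}, \dots, B_{n,h(n)-1}$ of size $\lfloor l_n/2 \rfloor$ forming a combinatorial design in the sense that every subset $T \subseteq I_n$ intersects all but a small fraction of the $B_{n,i}$'s in close to $|T|/2$ points (with hypergeometric-type error $O(\sqrt{|T|})$ up to logarithmic factors). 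A random construction achieves this via Chernoff concentration, the union bound over the $2^{l_n}$ candidate $T$'s being absorbed by the fast growth of $h$. For $f \in \omom$, set $\varphi_-(f) = A_f := \bigcup_n B_{n, f(n) \bmod h(n)}$; the uniform size $\lfloor l_n/2 \rfloor$ and $l_n/L_n \to 0$ yield $d(A_f) = 1/2$, so $A_f \in D$. For $\pi \in \Sym(\omega)$, define $\varphi_+(\pi) = \phi_\pi$ with $\phi_\pi(m)$ collecting the indices $i < h(m)$ that are bad for any of the $\leq l_m + 1$ distinct test sets $T_{m,n}(\pi) := \pi^{-1}([0, L_n)) \cap I_m$ arising in the monotone chain as $n$ varies; the design property, combined with this chain bound, ensures $|\phi_\pi(m)| \leq h(m)$, so $\phi_\pi \in \Phi_h$.

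\textbf{Tukey verification and main obstacle.} For the verification one shows that if $f(m) \bmod h(m) \notin \phi_\pi(m)$ for all but finitely many $m$, then the per-interval deviations in $|A_f \cap \pi^{-1}([0, L_n))| = \sum_m |B_{m, f(m) \bmod h(m)} \cap T_{m,n}(\pi)|$ aggregate to $o(L_n)$, so that $|\pi[A_f] \cap L_n|/L_n \to 1/2$; the condition $L_{n+1}/L_n \to 1$ then upgrades this to $d(\pi[A_f]) = d(A_f)$. Contrapositively, $d(\pi[A_f]) \neq d(A_f)$ forces $f \in^\infty \phi_\pi$. The main obstacle is precisely this aggregation: a permutation can spread the preimages of $[0, L_n)$ across many intervals $I_m$, so the sum of per-interval errors must stay negligible relative to $L_n$ even in the worst case. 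Controlling this requires a Cauchy--Schwarz-type argument exploiting the $\sqrt{|T_{m,n}|}$ design bound together with $\sum_m |T_{m,n}(\pi)| = L_n$ and $l_m/L_n \to 0$, and balancing these with the growth rate of $h$ needed to obtain the design in the first place. The ``fast enough'' hypothesis on $h$ is exactly what provides the combinatorial room for all these ingredients to be compatible.
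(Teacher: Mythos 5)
Your upper bound reduction is essentially the paper's argument: you show that for a fixed $A$, permutations sending $\pi[A]$ to a set of the same density form a meager set, by exhibiting appropriate open dense sets. (One minor issue: your family $U_{M,\epsilon}$ only forces $\overline{d}(\pi[A]) > d(A)+\epsilon$, which is vacuous when $d(A)$ is close to $1$; you need the symmetric family pushing $\underline{d}$ downward as well, which is what the paper's sets $F_{k,\ell}$ encode in one stroke. This is easily repaired.)

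Your lower bound reduction $(\omom,\Phi_h,\in^\infty)\leq_T(D,\Sym(\omega),R)$ takes a genuinely different and much harder route, and the aggregation step that you flag as ``the main obstacle'' is in fact a fatal gap, not a technicality. Consider a $\pi$ chosen adversarially so that $\pi^{-1}[L_n]$ is a transversal of the intervals $I_0,\dots,I_{L_n-1}$ --- one point $x_m$ from each $I_m$. Then $T_{m,n}(\pi)=\{x_m\}$ is a singleton, the hypergeometric concentration says nothing ($|B_{m,i}\cap T_{m,n}|\in\{0,1\}$, each outcome for roughly half the $i<h(m)$), and every per-interval ``error'' is $\pm\tfrac12$. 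Your Cauchy--Schwarz bound $\sum_m\sqrt{|T_{m,n}|}\leq\sqrt{ML_n}$ gives only $O(L_n)$ when $M\approx L_n$ intervals are hit, which is exactly this case, so it fails to show $o(L_n)$. More fundamentally, the total $\sum_m |B_{m,f(m)\bmod h(m)}\cap T_{m,n}(\pi)|$ being far from $L_n/2$ is a \emph{non-local} pattern over many $m$'s, none of which is individually ``bad''; hence it cannot be captured by a slalom $\phi_\pi$ with $\phi_\pi(m)$ depending only on $m$. So the contrapositive ``$f$ escapes $\phi_\pi$ implies $d(\pi[A_f])=1/2$'' is simply false for such $\pi$. (There is also a smaller mismatch: using $f(m)\bmod h(m)$ is incompatible with the slalom relation $\in^\infty$ unless you restrict the domain to $\prod_n h(n)$, which is fine but should be stated.)

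The paper sidesteps all of this by taking $\varphi_-(g)$ to be a superexponentially sparse set $\{a^g_n\}$ of density $0$, with $a^g_{n+1}=g(a^g_n)$, and $\varphi_+(\pi)(n)=2^n\cup\{\pi^{-1}(k):k\leq n\}$. If $g$ eventually avoids $\varphi_+(\pi)$, then each $a^g_{n+1}=g(a^g_n)$ is neither small nor a $\pi$-preimage of anything $\leq a^g_n$, forcing $\pi(a^g_{n+1})>a^g_n\geq 2^{n-1}$; so $\pi[\varphi_-(g)]$ is still sparse and of density $0$. The adversarial transversal is harmless here precisely because the set has density $0$: no concentration, no design, no aggregation. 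I'd redo the lower bound along those lines. The Corollary itself then follows immediately from the theorem together with Bartoszy\'nski's characterization $\non(\M)=\dd(\omom,\Phi_h,\in^\infty)$, $\cov(\M)=\bb(\omom,\Phi_h,\in^\infty)$, and $\rrr\leq\non(\M)$ gives the final sentence.
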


\begin{proof} This is immediate by Theorems~\ref{density-nonmeager} and~\ref{bartoszynski-characterization}.
\end{proof}

\begin{proof}[Proof of Theorem~\ref{density-nonmeager}]
We first show the easier $(D, \Sym (\omega), R) \leq_T (\M, \twoom, \not\ni)$. Notice that for the meager ideal it does not matter whether we consider $\twoom$
or the Baire space $\omom$; furthermore $\Sym(\omega)$ is homeomorphic to  $\omom$; so we may as well work with the meager ideal on $\Sym (\omega)$
and with the triple $(\M, \Sym(\omega),\not\ni)$ instead of $(\M,\twoom,\not\ni)$. We need to define $\varphi_-: D \to \M$ and $\varphi_+ : \Sym (\omega) \to \Sym (\omega)$
such that for all $x \in D$ and $\pi \in \Sym (\omega)$, if $\pi \notin \varphi_- (x)$ then $x R \varphi_+ (\pi)$.

Let $\varphi_+$ be the identity function, $\varphi_+ (\pi) = \pi$ for $\pi \in \Sym (\omega)$, and, for $x \in D$, let $\varphi_-(x)=\bigcup_{k,\ell\in\omega}F_{k,\ell}$, where

\[ F_{k,\ell} = \left\{ \pi \in \Sym (\omega) : \left(\forall n\geq\ell \; {| \pi (x) \cap n| \over n} \geq {1 \over k} \mbox{ or } 
   \forall n\geq\ell \; {| \pi (x) \cap n| \over n} \leq 1 - {1 \over k} \right) \right\}. \]
To see that $\varphi_- (x)$ is meager in $\Sym (\omega)$, it suffices to show that each
$A_{k,\ell}=\Sym(\omega) \setminus F_{k,\ell}$ is open and dense. But, as

\[ A_{k,\ell} = \left\{ \pi \in \Sym (\omega) : \left(\exists n\geq\ell \; {| \pi (x) \cap n| \over n} \leq {1 \over k} \mbox{ and } 
   \exists n\geq\ell \; {| \pi (x) \cap n| \over n} \geq 1 - {1 \over k} \right) \right\}, \]
if $\pi\in A_{k,\ell}$, it is clear that any $\pi^\prime \in \Sym (\omega)$ that agrees with 
$\pi$ in a sufficiently large initial segment of $\omega$ also belongs to $A_{k,\ell}$.
Bearing in mind the topology of $\Sym (\omega )$, one sees that $A_{k,\ell}$ is open.
The fact that $A_{k,\ell}$ is dense amounts to showing that, for every $\pi \in \Sym (\omega)$
and $m\in\omega$, there is $\pi^\prime \in A_{k,\ell}$ such that $\pi \restriction_m = \pi^\prime \restriction_m$, but that is also clear.

Finally, if $\pi \notin \varphi_- (x)$, then $\underline{d} (\pi [x]) =0$ and $\overline{d} ( \pi [x]) = 1$ so that the density of $\pi[x]$ is not defined.
A fortiori, $x R \varphi_+ (\pi)$ holds.

Next let $h \in \omom$ be such that $h(n) \geq 2^n + n + 1$ for all $n$. To prove that $(\omom,\Phi_h,\in^\infty) \leq_T (D, \Sym (\omega), R) $ we need to define
$\varphi_- : \omom \to D$ and $\varphi_+ : \Sym (\omega) \to \Phi_h$ such that for all $g \in \omom$ and all $\pi \in \Sym (\omega)$, if
$\varphi_- (g) R \pi$ then $g \in^\infty \varphi_+ (\pi)$.

Let $\varphi_+(\pi)(n) = 2^n \cup \{ \pi^{-1} (k) : k \leq n \}$. Clearly $|\varphi_+ (\pi) (n)| \leq 2^n + n + 1 \leq h(n)$. If $g \in \omom$ satisfies $g(n) \geq 2^n$ for almost
all $n$, recursively define $\varphi_- (g) = \{ a_n^g : n \in \omega \}$ as follows: let $n_0$ be minimal such that $g(n) \geq 2^n$ for all $n \geq n_0$; then let
$a_0^g = 0$ and
\[ a_{n+1}^g = \left\{ \begin{array}{ll} 2^{a_n^g} & \mbox{ if } n < n_0 \\
   g( a_n^g ) & \mbox{ if } n \geq n_0 \end{array} \right. \]
Note that $a_{n+1}^g \geq 2^{a_n^g} \geq 2^n$ holds for all $n$, that the $a_n^g$ therefore form a strictly increasing sequence, and that $d(\varphi_- (g)) = 0$ holds.
(If $g$ is not of this form, the definition of $\varphi_- (g)$ is irrelevant.)

Assume that $g \in\omom$ and $\pi \in \Sym (\omega)$ are such that $g \in^\infty \varphi_+ (\pi)$ fails. Then $g(n) \notin \varphi_+ (\pi)(n)$ holds for almost all $n$.
In particular $g(n) \geq 2^n$ for almost all $n$ so that the definition of $\varphi_- (g)$ in the preceding paragraph applies. Let $n_0$ be minimal such that
$g(n) \notin \varphi_+ (\pi)(n)$ for all $n \geq n_0$. Then $a_{n+1}^g = g (a_n^g) \notin \varphi_+ (\pi) (a_n^g)$ holds for all $n \geq n_0$. Hence $a_{n+1}^g \neq \pi^{-1} (k)$
for $k \leq a_n^g$, and $\pi (a_{n+1}^g) > a_n^g$ follows. In particular $\pi (a^g_{n+2}) > a_{n+1}^g \geq 2^n$ for $n \geq n_0$, and $d( \pi [ \varphi_- (g)]) = 0$ is immediate.
Thus $\varphi_- (g) $ and $\pi [ \varphi_- (g)]$ have the same density, and $\varphi_- (g) R \pi$ fails, as required. This completes the proof of the theorem.
\end{proof}

Combinatorial characterizations of $\non (\M)$ like $\dd (\omom, \Phi_h, \in^\infty)$ have been used before to show that cardinals defined in terms of permutations
of $\omega$ are larger than or equal to $\non (\M)$, see~\cite[Theorems 2.2 and 2.4]{BSZ00} of which the present proof is reminiscent.

In view of the analogy between permutations of density and rearrangement of series, it may now be natural to conjecture:
\begin{problem}[{\cite[Question 46]{BBBHHL20}}]
Does $\rrr = \non (\M)$ hold?
\end{problem}
However, it is far from clear that this is true, for while sets of density strictly between $0$ and $1$ seem to correspond to c.c. series, sets of density $0$ or $1$ rather correspond to
p.c.c. series converging to $\pm \infty$: 

\begin{table}[htbp]
    \centering
\begin{tabular}{ |c|c| } 
\hline 
p.c.c. series $\sum_n a_n$ & infinite-coinfinite set $A$ \\ [0.5ex] 
\hline 
c.c. series & $d(A) \in (0,1)$ \\ [0.5ex] 
$\sum_n a_n = \pm \infty$ & $d(A) \in \{ 0,1 \}$ \\ [0.5ex] 
$\sum_n a_n$ diverges by oscillation & $d(A)$ is undefined \\ [0.5ex] 
\hline
\end{tabular} 
\vspace{5pt}
\caption{Analogy between rearrangement and density} \label{table1} 
\end{table}

Thus the following rearrangement analogue of the density number $\ddd$ is natural:

\begin{definition}

$\rrr'$ is the smallest cardinality of a family $\Pi \sub \Sym (\omega)$ such that for every p.c.c. series $\sum_n a_n \in \RR \cup \{ \pm \infty \}$ there is
$\pi \in \Pi$ such that $\sum_n a_{\pi (n)} \neq \sum_n a_n$, where, as usual, we allow the possibility that $\sum_n a_{\pi (n)}$ diverges by oscillation. 
\end{definition}

Clearly $\rrr \leq \rrr' \leq \non(\M)$ (the proof of the second inequality is
exactly like in~\cite[Theorem 8]{BBBHHL20}; this was originally proved by Agnew~\cite{Ag40}). The point is that:
\begin{theorem}
$\rrr' = \non(\M)$.
\end{theorem}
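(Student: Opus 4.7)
Since $\rrr'\le\non(\M)$ is already noted, only the reverse inequality requires work. I would reproduce the Tukey-reduction scheme of Theorem~\ref{density-nonmeager}, this time establishing $(\omom,\Phi_h,\in^\infty)\le_T (P,\Sym(\omega),R')$, where $P$ denotes the set of p.c.c.\ series and $R'$ is the relation ``$\pi$ changes the value of $\sum_n a_n$''. Combined with Theorem~\ref{bartoszynski-characterization}, this gives the target $\non(\M)\le\rrr'$.

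I would reuse, with only cosmetic changes, both the slalom $\varphi_+(\pi)(n)=2^n\cup\{\pi^{-1}(k):k\le n\}$ and the sparse sequence $a_{n+1}^g=g(a_n^g)$ from the proof of Theorem~\ref{density-nonmeager}. On the series side of Table~\ref{table1}, the analogue of a density-$0$ set is a p.c.c.\ series summing to $\pm\infty$. So I would take $\varphi_-(g)=\sum_n b_n^g$ with $\sum_n b_n^g=+\infty$, placing positive mass at the sparse positions $a_n^g$ (values $V_n\to 0$ with $\sum V_n=+\infty$) and distributing negative mass throughout each gap $(a_n^g,a_{n+1}^g)$ so that the total gap mass is strictly less than $V_n$. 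Both positive and negative parts then diverge, individual terms tend to $0$ (the gaps grow super-exponentially in $n$), and the partial sums diverge to $+\infty$, yielding a p.c.c.\ series.

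Mirroring the density argument, the assumption that $g\in^\infty\varphi_+(\pi)$ fails gives $\pi(a_{n+1}^g)>a_n^g$ for all large $n$, so no $a_m^g$ with $m>n$ can be dragged into $[0,a_n^g]$ by $\pi^{-1}$. The goal is then to upgrade this to $\sum_n b_{\pi(n)}^g=+\infty$, so that $\varphi_-(g)R'\pi$ fails. The intuition is that, under the slalom condition, the rearranged partial sums must ingest most of the positive spikes $V_m$ ($m\le n$) before they can collect the entire compensating negative mass from the gaps below level $a_n^g$.

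The main obstacle is making this last step rigorous. Unlike density, which is controlled by sparsity of the image alone, the value of a rearranged p.c.c.\ sum depends finely on the ordering of positive and negative terms, and the slalom of the density proof does not by itself pin down the locations of negative terms in the rearranged series. The technical crux is therefore to tune the weights $V_n$ and the shape of the negative blocks - and, if needed, to enrich $\varphi_+(\pi)$ with additional local information about $\pi$ (for instance, tracking $\pi^{-1}(a_m^g+1)$ alongside $\pi^{-1}(a_m^g)$) - so that the slalom miss forces the rearranged partial sums to accumulate enough positive mass at every stage, ruling out trajectories that drop to $-\infty$ or oscillate.
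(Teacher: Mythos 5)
The sign of the spikes is backwards, and this is where your approach diverges from the paper in a way that matters. The paper also sets $A=\{a_n^g\}$ and forms a series from $A$ and its complement $B=\{b_n^g\}$, but it places the \emph{negative} mass at the sparse set $A$ and the \emph{positive} mass at the dense complement: $c_k=-\tfrac1n$ if $k=a_n^g$ and $c_k=\tfrac1n$ if $k=b_n^g$. You instead put positive spikes $V_n$ at $a_n^g$ and spread the compensating negative mass through the gaps. This inversion is precisely what creates the obstacle you then (correctly) run into.

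Here is why the orientation matters. What the slalom miss actually controls is the set $A$ itself: it forces the (pre)images of the $a_m^g$ under $\pi$ to remain ``pushed out'' (you need $m\lesssim |A\cap N|$ to have $a_m^g$ contribute among the first $N$ rearranged terms). If the sparse set carries the \emph{negative} mass, this is exactly what you want: after rearranging, only $O(|A\cap N|)$ of the negative terms can show up among the first $N$ terms, and since each is $-\tfrac1m$ with $m$ bounded by that same quantity, the negative contribution grows pathetically slowly. Meanwhile the positive terms, being the dense bulk with $\sum_m \tfrac1m=+\infty$, eventually overwhelm this, and the rearranged sum still diverges to $+\infty$. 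If, as in your construction, the sparse set carries the \emph{positive} spikes $V_m$, the slalom pushes exactly those spikes out to late positions, which is the \emph{wrong} direction --- now the scrambled negative gap-mass (over which the slalom gives essentially no control, since it only constrains $A$) can accumulate before the delayed spikes arrive, and the partial sums need not stay bounded below. So the difficulty you identify (``the slalom of the density proof does not by itself pin down the locations of negative terms in the rearranged series'') is not an unavoidable feature of the problem; it is an artifact of having placed the ``delicate'' terms on the set the slalom delays. Flipping the signs, as the paper does, makes the slalom's delaying effect an asset rather than a liability, and the ``last step'' you worry about becomes essentially the same computation as in the density proof. Your suggestion to enrich $\varphi_+$ with more local data about $\pi$ is not unreasonable, but it is unnecessary once the construction is inverted; the Bartoszy\'nski slalom of the density argument already suffices.

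Everything else in your proposal matches the paper: the overall Tukey scheme, the target reduction $(\omom,\Phi_h,\in^\infty)\le_T(P,\Sym(\omega),R')$ together with Theorem~\ref{bartoszynski-characterization}, the reuse of the slalom $\varphi_+(\pi)(n)=2^n\cup\{\pi^{-1}(k):k\le n\}$, and the sparse sequence $a_{n+1}^g=g(a_n^g)$. The sole substantive change to make is to swap the roles of positive and negative mass so that $A$ carries the negative spikes and $B$ the positive dense tail.
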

\begin{proof}[Proof Sketch]
The proof of $\non (\M) \leq \rrr'$ is analogous to the second part of the proof of Theorem~\ref{density-nonmeager}, and we therefore confine ourselves to
pointing out the necessary changes. We show $\dd (\omom, \Phi_h , \in^\infty) \leq \rrr'$ where $h$ is as in the latter proof. Given $\Pi \sub \Sym (\omega)$
with $|\Pi| < \dd (\omom, \Phi_h, \in^\infty)$, find $g \in\omom$ such that $g(n) \notin \varphi_+ (\pi)(n)$ for almost all $n$, for all $\pi \in \Pi$. Let $A: = \varphi_- (g) =
\{ a_n^g : n \in \omega \}$. Let $\{ b_n^g : n \in \omega \}$ be the increasing enumeration of the complement $B : = \omega \sem A$. Let
\[ c_k = \left\{ \begin{array}{ll} {1 \over n} & \mbox{ if } k = b_n^g \\
   - {1 \over n} & \mbox{ if } k = a_n^g \end{array} \right. \]
Clearly $\sum_k c_k$ is p.c.c. with $\sum_k c_k = \infty$. The argument of the proof of Theorem~\ref{density-nonmeager} now shows that $\sum_k c_{\pi(k)} = \infty$
still holds for all $\pi \in \Pi$. Thus $\Pi$ is not a witness for $\rrr '$, and $\non (\M) \leq \rrr'$ follows.
\end{proof}

For simplicity, we gave a direct, Tukey-free, proof of this result. For a discussion of the rearrangement number and its relatives in the framework of
Tukey connections, see van der Vlugt's master thesis~\cite{vdV19}.

We finally show that a similar method can be used to prove that a cardinal invariant related to large-scale topology is equal to $\non (\M)$.
Consider the triple $(\omoms, \Sym (\omega), S)$ where $x S \pi$ if the set $\{ n \in x : n \neq \pi (n) \in x \}$ is infinite. Following Banakh~\cite{Ba23} let
$\Delta = \dd ( \omoms, \Sym (\omega), S)$ and $\hat \Delta = \bb ( \omoms , \Sym (\omega) , S)$. That is, $\Delta$ is the least cardinality of
a $\Pi \sub \Sym (\omega)$ such that for all $x \in \omoms$ there is $\pi \in \Pi$ such that the set $\{ n \in x : n \neq \pi (n) \in x \}$ is infinite
while $\hat \Delta$ is the least cardinality of an $X \sub \omoms$ such that for all $\pi \in \Sym (\omega)$ there is $x \in X$ such that
the set $\{ n \in x : n \neq \pi (n) \in x \}$ is finite. Banakh~\cite[see Theorems 3.2 and 7.1]{Ba23} proved $(\omoms, \Sym (\omega), S) \leq_T (\M , \twoom , \not\ni)$ and established a connection
of $\Delta $ to large-scale topology. We prove:

\begin{theorem} \label{delta-nonmeager}
$(\omom, \Phi_h , \in^\infty) \leq_T ( \omoms , \Sym (\omega), S)$ for any $h \in \omom$ growing fast enough.
\end{theorem}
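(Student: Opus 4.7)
The plan is to imitate the $(\omom, \Phi_h, \in^\infty) \leq_T (D, \Sym(\omega), R)$ portion of Theorem~\ref{density-nonmeager}, taking $\varphi_-(g) \in \omoms$ to be the same sparse iteration of $g$ used there but enlarging the slalom $\varphi_+(\pi)$ so as to record both the $\pi$-preimages \emph{and} the $\pi$-images of small integers. The main obstacle, distinguishing this from the density argument, is the two-sided nature of the relation $S$: to certify $\varphi_-(g) \not S \pi$ one must exclude collisions $\pi(a_m^g) = a_j^g$ for both $j < m$ and $j > m$, and the future case cannot be addressed by the slalom condition at step $m$ since $a_j^g$ is not yet defined. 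Recording $\{\pi(k) : k \leq n\}$ in $\varphi_+(\pi)(n)$ will let the same avoidance condition, applied at step $j$ instead of step $m$, prevent $a_m^g$ from lying in $\pi^{-1}(a_j^g)$.

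Concretely, fix $h$ with $h(n) \geq 2^n + 2(n+1)$ and set
\[
\varphi_+(\pi)(n) = 2^n \cup \{\pi^{-1}(k) : k \leq n\} \cup \{\pi(k) : k \leq n\},
\]
which is an $h$-slalom. Define $\varphi_-(g) = \{a_n^g : n \in \omega\}$ exactly as in Theorem~\ref{density-nonmeager}: if $g(n) \geq 2^n$ for all sufficiently large $n$, let $n_0$ be the least such witness, put $a_0^g = 0$, $a_{n+1}^g = 2^{a_n^g}$ for $n < n_0$, and $a_{n+1}^g = g(a_n^g)$ for $n \geq n_0$; otherwise let $\varphi_-(g)$ be any fixed element of $\omoms$. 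The sequence $(a_n^g)$ is strictly increasing, so $\varphi_-(g) \in \omoms$.

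For the verification, suppose $g(n) \notin \varphi_+(\pi)(n)$ for every $n \geq n_1$ (for some $n_1 \geq n_0$). Fix $m \geq n_1 + 1$ and let $n = m-1$. From $a_m^g = g(a_n^g) \notin \varphi_+(\pi)(a_n^g)$ and the clause $\{\pi^{-1}(k) : k \leq a_n^g\} \subseteq \varphi_+(\pi)(a_n^g)$ we get $\pi(a_m^g) > a_n^g$, which rules out $\pi(a_m^g) = a_j^g$ for $j \leq n$. For $j > m$, the avoidance condition at step $j$ gives $a_j^g \notin \{\pi(k) : k \leq a_{j-1}^g\}$, hence $\pi^{-1}(a_j^g) > a_{j-1}^g \geq a_m^g$, which forbids $\pi(a_m^g) = a_j^g$. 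Thus $\pi(a_m^g) \in \varphi_-(g)$ forces $\pi(a_m^g) = a_m^g$, and the set $\{a \in \varphi_-(g) : a \neq \pi(a) \in \varphi_-(g)\}$ is contained in the finite set $\{a_k^g : k \leq n_1\}$, so $\varphi_-(g) \not S \pi$. This is the required Tukey implication.
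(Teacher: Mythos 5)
Your proof is correct and takes essentially the same approach as the paper's: record both $\pi$-preimages and $\pi$-images of small integers in the slalom, build the sparse sequence $(a_n^g)$ by iterating $g$, and split the collision analysis for $\pi(a_m^g)$ into the past case $j<m$ (handled by the $\pi^{-1}$-clause at step $m$) and the future case $j>m$ (handled by the $\pi$-clause at step $j$). The only cosmetic difference is that you reuse the $2^n$-sparse construction from Theorem~\ref{density-nonmeager}, so you need $h(n)\geq 2^n+2(n+1)$, whereas the paper only needs $n+1$-sparseness and hence the tighter bound $h(n)\geq 3(n+1)$; both are fine since the statement only asks for $h$ growing fast enough and Bartoszy\'nski's characterization is insensitive to the choice of $h$.
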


\begin{corollary}  \label{delta-nonmeager-cor}
$\Delta = \non (\M)$ and $\hat \Delta = \cov (\M)$.
\end{corollary}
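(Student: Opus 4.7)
The plan is to adapt the second half of the proof of Theorem~\ref{density-nonmeager}, replacing the $R$-relation on $D \times \Sym(\omega)$ with the $S$-relation on $\omoms \times \Sym(\omega)$. Fix $h \in \omom$ with $h(n) \geq 2^n + 2n + 2$, and define $\varphi_+ : \Sym(\omega) \to \Phi_h$ and $\varphi_- : \omom \to \omoms$ by
\[ \varphi_+(\pi)(n) = 2^n \cup \{\pi(k) : k \leq n\} \cup \{\pi^{-1}(k) : k \leq n\}, \]
which is visibly an $h$-slalom, and $\varphi_-(g) = \{a_n^g : n \in \omega\}$ by the very same recursion as in Theorem~\ref{density-nonmeager}: $a_0^g = 0$, $a_{n+1}^g = 2^{a_n^g}$ for $n < n_0$, and $a_{n+1}^g = g(a_n^g)$ for $n \geq n_0$, where $n_0$ is minimal with $g(k) \geq 2^k$ for all $k \geq n_0$ (and $\varphi_-(g)$ is defined arbitrarily otherwise).

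To verify the Tukey property, I would assume $g \in^\infty \varphi_+(\pi)$ fails, say $g(n) \notin \varphi_+(\pi)(n)$ for all $n \geq n_1$, and show that $\{a \in \varphi_-(g) : a \neq \pi(a) \in \varphi_-(g)\}$ is finite, i.e., $\varphi_-(g) S \pi$ fails. For every $n$ large enough that $n - 1 \geq n_0$ and $a_{n-1}^g \geq n_1$, instantiating the escape condition at $k = a_{n-1}^g$ gives $a_n^g = g(a_{n-1}^g) \notin \varphi_+(\pi)(a_{n-1}^g)$, whence both $\pi(a_n^g) > a_{n-1}^g$ and $\pi^{-1}(a_n^g) > a_{n-1}^g$. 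Now suppose $\pi(a_n^g) = a_j^g$ for some $j$. If $j < n$ then $a_j^g \leq a_{n-1}^g < \pi(a_n^g)$, a contradiction. If $j > n$, applying the escape condition at $k = a_{j-1}^g$ (which still satisfies $a_{j-1}^g \geq a_n^g \geq n_1$ and $j-1 \geq n_0$) yields $a_n^g = \pi^{-1}(a_j^g) > a_{j-1}^g \geq a_n^g$, again a contradiction. Hence $j = n$, so $\pi(a_n^g) = a_n^g$ for all large $n$, as required.

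The main obstacle compared to Theorem~\ref{density-nonmeager} is that here one must simultaneously rule out $\pi$ sending $a_n^g$ to an \emph{earlier} element of the thin set (handled by including $\pi$-images in the slalom) and to a \emph{later} one (handled by the $\pi^{-1}$-images); this is why $\varphi_+(\pi)(n)$ must enumerate both $\pi$- and $\pi^{-1}$-images of $\{0,\ldots,n\}$ rather than only one direction as in the density proof. Corollary~\ref{delta-nonmeager-cor} then follows by combining this with Banakh's Tukey reduction $(\omoms, \Sym(\omega), S) \leq_T (\M, \twoom, \not\ni)$ and Theorem~\ref{bartoszynski-characterization}.
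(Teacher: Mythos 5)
Your proposal is correct and follows essentially the same approach as the paper: you build the same slalom (collecting both $\pi$- and $\pi^{-1}$-images of $\{0,\ldots,n\}$), construct the same thin set via $a_{n+1}^g = g(a_n^g)$, and derive the two-sided escape conditions exactly as the paper does to conclude that $\pi(a_n^g) \notin \varphi_-(g)$ unless $\pi(a_n^g) = a_n^g$. The only differences are cosmetic (e.g.\ using $2^n$ rather than $n+1$ in the initial segment of the slalom, which is immaterial since Theorem~\ref{bartoszynski-characterization} is $h$-independent).
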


This answers a question of Banakh~\cite[Problem 3.8]{Ba23}.

\begin{proof}[Proof of Corollary~\ref{delta-nonmeager-cor}]
This is immediate by Theorems~\ref{delta-nonmeager} and~\ref{bartoszynski-characterization}, and by Banakh's Theorems 3.2 and 7.1~\cite{Ba23}. 
\end{proof}

\begin{proof}[Proof of Theorem~\ref{delta-nonmeager}]
The proof is very similar to   the second part of the proof of Theorem~\ref{density-nonmeager}.
Let $h \in \omom$ be such that $h(n) \geq 3 (n+1)$ for all $n$.
We need to define $\varphi_- : \omom \to \omoms$ and $\varphi_+ : \Sym (\omega) \to \Phi_h$ such that for all $g \in \omom$ and $\pi \in \Sym (\omega)$, 
if $\varphi_- (g) S \pi$ then $g \in^\infty \varphi_+ (\pi)$.

Let $\varphi_+ (\pi)(n) = (n+1)  \cup \{ \pi^{-1} (k) , \pi (k) : k \leq n \}$. Clearly $|\varphi_+ (\pi) (n)| \leq 3 (n+1)  \leq h(n)$. If $g \in \omom$ satisfies $g(n) \geq n+1$ for almost
all $n$, recursively define $\varphi_- (g) = \{ a_n^g : n \in \omega \}$ as follows: let $n_0$ be minimal such that $g(n) \geq n+1$ for all $n \geq n_0$; then let
$a_0^g = 0$ and
\[ a_{n+1}^g = \left\{ \begin{array}{ll} {a_n^g} + 1 & \mbox{ if } n < n_0 \\
   g( a_n^g ) & \mbox{ if } n \geq n_0 \end{array} \right. \]
Note that $a_{n+1}^g \geq {a_n^g} + 1 \geq n + 1$ holds for all $n$, that the $a_n^g$ thus form a strictly increasing sequence, and that $\varphi_- (g)$ is infinite.
(If $g$ is not of this form, the definition of $\varphi_- (g)$ is irrelevant.)

Assume that $g \in\omom$ and $\pi \in \Sym (\omega)$ are such that $g \in^\infty \varphi_+ (\pi)$ fails. Then $g(n) \notin \varphi_+ (\pi)(n)$ holds for almost all $n$.
In particular $g(n) \geq n+1$ for almost all $n$ so that the definition of $\varphi_- (g)$ in the preceding paragraph applies. Let $n_0$ be minimal such that
$g(n) \notin \varphi_+ (\pi)(n)$ for all $n \geq n_0$. Fix such $n$ with $\pi (a_{n+1}^g) \neq a_{n+1}^g$. 
Then $a_{n+1}^g = g (a_n^g) \notin \varphi_+ (\pi) (a_n^g)$ holds. Hence $a_{n+1}^g \neq \pi^{-1} (k)$
for $k \leq a_n^g$, and $\pi (a_{n+1}^g) > a_n^g$ follows. Now assume $m > n+1$. Then $a_{m}^g = g (a_{m-1}^g) \notin \varphi_+ (\pi) (a_{m-1}^g)$ holds. 
Hence $a_{m}^g \neq \pi (k)$ for $k \leq a_{m-1}^g$, and $\pi (a_{n+1}^g) \neq a_m^g$ follows. Therefore $\pi (a_{n+1}^g)  \notin \varphi_- (g)$. Unfixing $n$
we see that the set $\{ n : a_n^g \neq \pi (a_n^g) \in \varphi_- (g) \}$ is finite, and $\varphi_- (g) S \pi$ fails, as required. 
This completes the proof of the theorem.
\end{proof}



\section{Variations on the density number: ZFC results}


Inspired by the variants of the rearrangement number considered in~\cite{BBBHHL20} -- and also by $\rrr '$ of the previous section -- 
we now look at natural variants of the density number. A further motivation for studying these variants comes from an analysis of the proof
of Theorem~\ref{density-nonmeager}; in fact, we shall use the framework we will develop to see what this proof really shows (Theorem~\ref{density-nonmeager2}).

Let $\osc$ be a symbol denoting ``oscillation". For a (necessarily infinite-coinfinite) set $A \subset \omega$, say that $d (A) = \osc$ if $\underline{d} (A) < 
\overline{d} (A)$. Let $\all = [0,1] \cup \{ \osc \}$. This is the set of all possible densities of (infinite-coinfinite) subsets $A$ of $\omega$, where we
include the possibility that the density of $A$ is undefined. We are ready for the main definition of this section.

\begin{definition}   \label{general-density-number}
Assume $X , Y \sub \all$ are such that $X \neq \emptyset$ and for all $x \in X$ there is $y \in Y$ with $y \neq x$ (equivalently either $|Y| \geq 2$ or $Y \sem X
\neq \emptyset$). The {\em $(X,Y)$-density number} $\ddd_{X,Y}$ is the smallest cardinality of a family $\Pi \sub \Sym (\omega)$ such that for every 
infinite-coinfinite set $A \sub \omega$ with $d(A) \in X$ there is $\pi \in \Pi$ such that $d (\pi [A]) \in Y$ and $d (\pi [A]) \neq d(A)$.
\end{definition}

Note in particular that if $d(A) = \osc$ we require that $d(\pi [A]) $ has density. Clearly, if $X' \sub X$ and $Y' \supseteq Y$ then $\ddd_{X' , Y'} \leq \ddd _{X,Y}$.
Also $\ddd = \ddd_{[0,1], \all}$. 

Let us first provide the Tukey framework for $\ddd_{X,Y}$: for $X,Y$ as in Definition~\ref{general-density-number}, consider triples $(D_X ,\Sym (\omega), R_Y)$
where $D_X$ is the collection of infinite-coinfinite sets $A$ with $d(A) \in X$, the relation $R_Y$ is given by $A R_Y \pi$ if $d (\pi [A]) \in Y$
and $d(\pi [A] ) \neq d(A)$, for $A \in D_X$ and $\pi \in \Sym (\omega)$. Then $\ddd_{X,Y} = \dd (D_X, \Sym (\omega), R_Y)$. Let $\ddd_{X,Y}^\perp : =
\bb (D_X, \Sym (\omega), R_Y)$. We are ready to rephrase the main result of the last section in this framework:

\begin{theorem} \label{density-nonmeager2} 
\begin{enumerate}
\item If $\osc \notin X$ and $\osc \in Y$, then $(D_X, \Sym (\omega), R_Y) \leq_T {(\M, \twoom, \not\ni )}$.
\item If $0 \in X $ or $1 \in X$, then $(\omom, \Phi_h, \in^\infty) \leq_T (D_X , \Sym (\omega) , R_Y)$ for $h \in\omom$ with $| h(n) | \geq 2^n +n+1$ for all $n$.
\end{enumerate}
\end{theorem}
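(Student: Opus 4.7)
The plan is to observe that both Tukey reductions already constructed in the proof of Theorem~\ref{density-nonmeager} in fact yield slightly more than was claimed, so that the present theorem just extracts the optimal hypotheses on $X$ and $Y$ under which each half of the original argument goes through without modification. In particular, no genuinely new construction is needed; what must be verified is that the densities produced by the auxiliary sets and permutations from that proof are compatible with the restrictions defining $D_X$ and $R_Y$.

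For part (1), the plan is to take $\varphi_+ : \Sym(\omega) \to \Sym(\omega)$ to be the identity and, for $x \in D_X$, to reuse the meager set $\varphi_-(x) = \bigcup_{k, \ell \in \omega} F_{k,\ell}$ defined in the first half of the proof of Theorem~\ref{density-nonmeager}. That argument showed that any $\pi \notin \varphi_-(x)$ satisfies $\underline{d}(\pi[x]) = 0$ and $\overline{d}(\pi[x]) = 1$, so that $d(\pi[x]) = \osc$. Under the hypotheses $\osc \in Y$ and $\osc \notin X$, this common output density lies in $Y$ and differs from $d(x) \in X$, so $x R_Y \pi$ holds, establishing the Tukey connection.

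For part (2), the plan is to keep $\varphi_+(\pi)(n) = 2^n \cup \{\pi^{-1}(k) : k \leq n\}$ and to split according to whether $0 \in X$ or only $1 \in X$. In the first case, set $\varphi_-(g) = \{a_n^g : n \in \omega\}$ exactly as in the second half of the proof of Theorem~\ref{density-nonmeager}; then $d(\varphi_-(g)) = 0 \in X$, and the argument there shows that whenever $g \in^\infty \varphi_+(\pi)$ fails one also has $d(\pi[\varphi_-(g)]) = 0$, so $d(\pi[\varphi_-(g)]) = d(\varphi_-(g))$ and $\varphi_-(g) R_Y \pi$ fails regardless of $Y$. In the second case, replace $\varphi_-(g)$ by its complement $\omega \setminus \{a_n^g : n \in \omega\}$; since $d(\omega \setminus B) = 1 - d(B)$ whenever the right-hand side is defined and $\pi[\omega \setminus B] = \omega \setminus \pi[B]$, both $\varphi_-(g)$ and its $\pi$-image then have density $1$, and again $\varphi_-(g) R_Y \pi$ fails.

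The main point, rather than an obstacle, is the bookkeeping: one has to notice that the densities produced by the two constructions in the proof of Theorem~\ref{density-nonmeager} are always $\osc$ (in the first half) and $0$ (in the second half), so that the reduction survives precisely under the hypotheses $\osc \in Y$ together with $\osc \notin X$ in part (1), and $\{0,1\} \cap X \neq \emptyset$ in part (2); the complement trick is the only small new ingredient and is what allows the case $1 \in X$, $0 \notin X$ to be handled on the same footing as $0 \in X$.
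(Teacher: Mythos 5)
Your proposal is correct and follows exactly what the paper intends by ``This is immediate from the proof of Theorem~\ref{density-nonmeager}'': part (1) reuses the same meager sets $F_{k,\ell}$ and only needs the observation that the output density is always $\osc$, which lies in $Y$ and differs from $d(x) \in X$ under the stated hypotheses, while part (2) reuses $\varphi_+$ and $\varphi_-(g) = \{a_n^g\}$ when $0 \in X$ and passes to complements when $1 \in X$. The complement observation, which you spell out, is the only supplementary point the paper leaves implicit (it makes the same remark explicitly later, in the proof of Theorem~\ref{density-continuum}), so your argument matches the paper's.
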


\begin{proof}
This is immediate from the proof of Theorem~\ref{density-nonmeager}.
\end{proof}

We shall see later that the assumptions in both parts of this theorem are necessary. For (1), this follows from Theorem~\ref{density-covmeager} and the consistency
of $\non (\M) < \cov (\M)$ (true in the Cohen model), and for (2), from Theorem~\ref{density-reaping} and the consistency of $\rr < \non (\M)$ (which holds e.g.
in the Blass-Shelah model~\cite{BS87}, \cite[pp. 370]{BJ95}). 

\begin{corollary}    \label{density-nonmeager-cor2}
$\ddd = \ddd_{ \{ 0,1 \}, \all} = \ddd_{ [0,1], \{ \osc \} } = \ddd_{ \{ 0,1 \} , \{ \osc \} } = \non (\M)$, with the dual result holding for the dual cardinals.
\end{corollary}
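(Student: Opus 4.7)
The plan is to observe that each of the four pairs $(X,Y)$ in the statement meets both hypotheses of Theorem~\ref{density-nonmeager2}, so that the corollary reduces to a direct bookkeeping step once the Tukey reductions and Bartoszy\'nski's Theorem~\ref{bartoszynski-characterization} are in hand.

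First, for each of
\[ (X,Y) \in \big\{ ([0,1],\all),\; (\{0,1\},\all),\; ([0,1],\{\osc\}),\; (\{0,1\},\{\osc\}) \big\}, \]
I would verify the two hypothesis checks. On the one hand, $\osc \notin X$ because $X \sub [0,1]$ in all four cases, and $\osc \in Y$ is immediate (both possible $Y$'s contain $\osc$), so Theorem~\ref{density-nonmeager2}(1) yields $(D_X, \Sym(\omega), R_Y) \leq_T (\M, \twoom, \not\ni)$. On the other hand, $0 \in X$ in all four cases, so Theorem~\ref{density-nonmeager2}(2) yields $(\omom, \Phi_h, \in^\infty) \leq_T (D_X, \Sym(\omega), R_Y)$ for any $h$ with $h(n) \geq 2^n + n + 1$.

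Next, using the standard consequences of Tukey reducibility ($\A \leq_T \B$ implies $\dd(\A) \leq \dd(\B)$ and $\bb(\A) \geq \bb(\B)$), the chain of the two reductions gives
\[ \dd(\omom, \Phi_h, \in^\infty) \;\leq\; \ddd_{X,Y} \;\leq\; \dd(\M, \twoom, \not\ni) \]
and dually
\[ \bb(\omom, \Phi_h, \in^\infty) \;\geq\; \ddd_{X,Y}^\perp \;\geq\; \bb(\M, \twoom, \not\ni). \]
By Theorem~\ref{bartoszynski-characterization} the leftmost terms are $\non(\M)$ and $\cov(\M)$, and by definition the rightmost terms are also $\non(\M)$ and $\cov(\M)$. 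Hence all four $\ddd_{X,Y}$ collapse to $\non(\M)$ and all four dual cardinals to $\cov(\M)$. Since $\ddd = \ddd_{[0,1],\all}$ by definition, this yields the full chain of equalities together with the dual result.

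There is no real obstacle: the entire content lies in Theorem~\ref{density-nonmeager2}, and the corollary merely records what that theorem says at the four extremal pairs. The only thing one has to be a bit careful with is to note that both hypotheses $(\osc \notin X,\, \osc \in Y)$ and $(0 \in X \text{ or } 1 \in X)$ do in fact hold simultaneously in each of the four cases, which is exactly why these are the ``tight'' pairs; the remaining $(X,Y)$-pairs, for which one or the other hypothesis fails, are precisely those treated in the later sections where strict inequality becomes possible.
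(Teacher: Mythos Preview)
Your proposal is correct and matches the paper's intended argument: the corollary is stated without explicit proof, as it follows immediately from Theorem~\ref{density-nonmeager2} together with Bartoszy\'nski's Theorem~\ref{bartoszynski-characterization}, and you have spelled out exactly this derivation. The hypothesis checks and the sandwich via Tukey reductions are precisely what the paper has in mind.
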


It is natural to look for other lower and upper bounds of $\ddd_{X,Y}$ for various $X$ and $Y$; in particular we may ask:
\begin{itemize}
\item what if $0,1 \notin X$?
\item what if $\osc \in X$ or $\osc \notin Y$?
\end{itemize}
We will mostly obtain lower bounds, but there is one more upper bound too, Theorem~\ref{density-reaping}. First consider the relational system $(\twoom , \N , \in)$. 
Then $\dd (\twoom , \N , \in) = \cov (\N)$ and $\bb (\twoom , \N , \in) = \non (\N)$.

\begin{theorem}   \label{density-covnull}
If $X \cap [0,1] \neq \emptyset$, then $(\twoom , \N , \in) \leq_T (D_X , \Sym (\omega) , R_Y)$.
\end{theorem}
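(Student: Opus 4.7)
The plan is to exhibit Borel Tukey maps $\varphi_-\colon\twoom\to D_X$ and $\varphi_+\colon\Sym(\omega)\to\N$. I fix some $r\in X\cap[0,1]$. The case $r\in\{0,1\}$ follows by combining Theorem~\ref{density-nonmeager2}(2) with the classical Bartoszy\'nski Tukey reduction $(\twoom,\N,\in)\leq_T(\omom,\Phi_h,\in^\infty)$, so I concentrate on the main case $r\in(0,1)$.

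The approach is a ``random block'' construction. Partition $\omega$ into consecutive intervals $I_n$ with $|I_n|=k_n$, where $k_n\to\infty$ slowly (e.g., $k_n=n+1$); in particular $k_n = o(k_0+\cdots+k_{n-1})$ and $n=o(k_0+\cdots+k_{n-1})$. For each $n$ let $\mathcal{S}_n$ be the family of subsets of $I_n$ of size $\lfloor r k_n\rfloor$, and fix a measure-preserving identification of $\twoom$ (with Lebesgue measure) with $\prod_n\mathcal{S}_n$ under the product of uniform measures. Given $x\in\twoom$, let $S_n(x)\in\mathcal{S}_n$ be its $n$-th coordinate and set $A_x=\bigcup_n S_n(x)$; the slow growth of $k_n$ forces $d(A_x)=r$ for every $x$, so $A_x\in D_X$. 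Set $\varphi_-(x)=A_x$ and, for $\pi\in\Sym(\omega)$, $\varphi_+(\pi)=\{x\in\twoom:d(\pi[A_x])\neq r\}$. If $x\notin\varphi_+(\pi)$ then $d(\pi[A_x])=r=d(A_x)$, so $\varphi_-(x)\mathrel{R_Y}\pi$ fails. The whole argument thus reduces to showing $\varphi_+(\pi)$ is null for every $\pi$.

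For this, fix $\pi$ and let $B_N=\pi^{-1}([0,N))$, so that $|\pi[A_x]\cap N|=|A_x\cap B_N|=\sum_n|A_x\cap B_N\cap I_n|$. Each summand is a hypergeometric random variable (with respect to the uniform measure on $\mathcal{S}_n$) with mean approximately $r|B_N\cap I_n|$ and variance at most $r(1-r)|B_N\cap I_n|$, and these summands are independent across $n$. Hence $|A_x\cap B_N|$ has mean $rN+o(N)$ (the correction coming from the at most $O(\sqrt{N})$ blocks intersecting $B_N$) and variance at most $O(N)$, so Chebyshev gives a bound of order $O(1/(\varepsilon^2 N))$ on the measure of the bad set $\{x:||A_x\cap B_N|-rN|>\varepsilon N\}$. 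Along the sparse subsequence $N_j=j^2$ this is summable, so Borel--Cantelli plus the observation that $|A_x\cap B_N|$ differs from $|A_x\cap B_{N_j}|$ by at most $N_{j+1}-N_j=O(j)=o(N_j)$ for $N\in[N_j,N_{j+1})$ yields $|A_x\cap B_N|/N\to r$ for a.e.\ $x$; intersecting over rational $\varepsilon$ gives $\varphi_+(\pi)\in\N$.

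The main obstacle is calibrating $k_n$: the block sizes must grow slowly enough (relative to their partial sums) that $d(A_x)=r$ is guaranteed for every $x$ regardless of how the current block $I_n$ is being filled in at the cutoff, yet the concentration estimate for $|A_x\cap B_N|$ must be valid uniformly over every $\pi\in\Sym(\omega)$, including the extreme case where $B_N$ sits entirely inside a single block $I_n$. It is this latter requirement that forces the use of independence across blocks together with the universal bound $\mathrm{Var}(|A_x\cap B_N|)\leq O(N)$, which holds no matter how $B_N$ distributes across the blocks.
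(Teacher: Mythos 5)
Your proposal is correct but takes a genuinely different route from the paper in the main case $r\in(0,1)$. The paper uses the Bernoulli product measure $\mu$ on $\twoom$ with $\mu(\{x:x(n)=1\})=r$: the strong law of large numbers gives $\mu(\{x:d(x)=r\})=1$, and --- this is the key shortcut --- $\mu$ is invariant under the coordinate action of $\Sym(\omega)$, so $\mu(\{x:d(\pi[x])=r\})=\mu(\pi^{-1}\{x:d(x)=r\})=1$ comes for free. They then take $\varphi_-(x)=x$ on $D_{\{r\}}$ (arbitrary off it) and $\varphi_+(\pi)=\{x:d(x)\neq r\text{ or }d(\pi[x])\neq r\}$, with no concentration computation at all. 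Your ``random block'' measure has the advantage that $\varphi_-$ always lands in $D_{\{r\}}$ (no irrelevant clause), but it is not $\Sym(\omega)$-invariant, which is why you are forced to prove nullity of $\varphi_+(\pi)$ by hand (hypergeometric variance, Chebyshev, Borel--Cantelli along $N_j=j^2$); both proofs are valid, and the paper's invariance trick is shorter. One small inaccuracy in your write-up: for an arbitrary $\pi$ the set $B_N=\pi^{-1}[0,N)$ can meet up to $N$ blocks (one element in each of the first $N$ blocks, say), so the parenthetical ``at most $O(\sqrt N)$ blocks intersecting $B_N$'' is false as stated; the mean is nevertheless $rN+O(\sqrt N)$ because the per-block rounding bias is at most $m_n/k_n$ with $m_n=|B_N\cap I_n|$, and $\sum_n m_n/k_n$, subject to $\sum m_n=N$ and $m_n\le k_n=n+1$, is maximized by completely filling the first $O(\sqrt N)$ blocks. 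Your treatment of $r\in\{0,1\}$ matches the paper's.
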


We do not know whether this is also true if $X = \{ \osc \}$ (Question~\ref{osc-covnull}).

\begin{corollary}   \label{density-covnull-cor}
If $X \cap [0,1] \neq \emptyset$, then $\cov (\N) \leq \ddd_{X,Y}$ and $\ddd_{X,Y}^\perp \leq \non (\N)$.
\end{corollary}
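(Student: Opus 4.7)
The plan is to construct a Borel Tukey reduction: for some fixed $r \in X \cap [0,1]$ I would code each $x \in \twoom$ into an infinite-coinfinite set $A_x \subseteq \omega$ with $d(A_x) = r$ in such a way that, for any $\pi \in \Sym(\omega)$, the set $\{x : d(\pi[A_x]) \neq r\}$ is null. I would work on $\twoom$ with the product measure $\nu_r := \bigotimes_k \mathrm{Ber}(p_k)$, where $(p_k)$ is a sequence in $[0,1]$ with $\tfrac{1}{n}\sum_{k<n}p_k \to r$ and $\sum p_k = \sum(1-p_k) = \infty$. The canonical choice is $p_k \equiv r$ for $r \in (0,1)$; in the boundary cases take $p_k = 1/(k+2)$ if $r=0$ and $p_k = 1 - 1/(k+2)$ if $r=1$. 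Setting $A_x := \{k : x(k) = 1\}$, Borel--Cantelli (using the divergent-sum conditions) ensures $A_x$ is infinite-coinfinite $\nu_r$-a.s., and Kolmogorov's SLLN (variances are at most $1/4$, so Kolmogorov's condition $\sum \mathrm{Var}(Y_k)/k^2 < \infty$ is trivial) gives $d(A_x) = r$ $\nu_r$-a.s.

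The key step is that, for each fixed $\pi \in \Sym(\omega)$, $d(\pi[A_x]) = r$ for $\nu_r$-a.e. $x$. Writing $T_\pi x := x \circ \pi^{-1}$ so that $\pi[A_x] = \{m : (T_\pi x)(m) = 1\}$, the coordinates of $T_\pi x$ remain independent under $\nu_r$ with marginals $\mathrm{Ber}(p_{\pi^{-1}(m)})$, so by SLLN one has $\tfrac{1}{n}|\pi[A_x] \cap n| - \tfrac{1}{n}\sum_{m<n} p_{\pi^{-1}(m)} \to 0$ $\nu_r$-a.s. It therefore suffices to verify $\tfrac{1}{n}\sum_{m<n} p_{\pi^{-1}(m)} \to r$. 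This is trivial for $p_k \equiv r$, and for $r=0$ it follows from $\sum_{m<n} p_{\pi^{-1}(m)} \leq \sum_{k<n} 1/(k+2) = O(\log n)$ (the bound holds because $\{\pi^{-1}(m) : m<n\}$ is an $n$-element subset of $\omega$ and $(p_k)$ is decreasing); the case $r=1$ is dual.

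To conclude, $(\twoom, \nu_r)$ is an atomless standard Borel probability space, so admits a Borel measure-preserving isomorphism $\psi : (\twoom, \mathrm{Leb}) \to (\twoom, \nu_r)$ modulo a Lebesgue null set $N$. Set $\varphi_-(x) := A_{\psi(x)}$ for $x \notin N$ and $\varphi_-(x) := A^*$ for $x \in N$, where $A^*$ is any fixed infinite-coinfinite set of density $r$; put $\varphi_+(\pi) := N \cup \{x : d(\pi[\varphi_-(x)]) \neq r\}$. By the previous paragraph and measure-preservation of $\psi$, $\varphi_+(\pi)$ is Borel and Lebesgue null. If $x \notin \varphi_+(\pi)$ then $d(\pi[\varphi_-(x)]) = r = d(\varphi_-(x))$, so $\varphi_-(x) R_Y \pi$ fails, which is exactly the Tukey condition. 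The main subtlety is the boundary case $r \in \{0,1\}$: since $T_\pi$ does not preserve the inhomogeneous $\nu_r$, one cannot simply invoke measure-preservation and must instead apply a SLLN for independent but non-identically distributed Bernoullis together with the explicit control of the Cesàro averages of the permuted parameter sequence sketched above.
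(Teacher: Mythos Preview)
Your argument is correct. For $r \in (0,1)$ it is essentially the paper's proof: both use the i.i.d.\ Bernoulli$(r)$ product measure on $\twoom$, invoke the SLLN to get $d(A_x)=r$ a.s., and for each fixed $\pi$ use that the coordinate-permutation map preserves this measure to conclude $d(\pi[A_x])=r$ a.s. You are a bit more careful than the paper in explicitly passing through a Borel measure isomorphism $(\twoom,\mathrm{Leb})\to(\twoom,\nu_r)$ to land in the standard null ideal; the paper leaves this implicit.

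The genuine difference is in the boundary cases $r\in\{0,1\}$. The paper does not argue these probabilistically at all: it simply observes that $(\twoom,\N,\in)\leq_T(\omom,\Phi_h,\in^\infty)$ and then quotes Theorem~\ref{density-nonmeager2}(2), which gives $(\omom,\Phi_h,\in^\infty)\leq_T(D_X,\Sym(\omega),R_Y)$ whenever $0\in X$ or $1\in X$ via the slalom construction. Your route instead keeps the probabilistic picture by taking an \emph{inhomogeneous} Bernoulli product with $p_k=1/(k+2)$ (or its complement), uses Kolmogorov's SLLN for independent non-identically-distributed variables, and handles the loss of permutation-invariance by the rearrangement inequality $\sum_{m<n}p_{\pi^{-1}(m)}\le\sum_{k<n}p_k=O(\log n)$. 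This is a nice self-contained alternative: it yields a single uniform proof for all $r\in[0,1]$ and avoids any appeal to the slalom machinery, at the cost of needing the slightly more delicate version of the SLLN and the monotonicity observation. The paper's route, on the other hand, recycles the stronger reduction already proved and thereby gets the $\{0,1\}$ case for free.
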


\begin{proof}[Proof of Theorem~\ref{density-covnull}]

If $0 \in X$ or $1 \in X$ this follows from Theorem~\ref{density-nonmeager2} because it is well-known and easy to see 
that ${(\twoom , \N , \in)} \leq_T (\omom, \Phi_h, \in^\infty)$ (where $h$ is as in Theorem~\ref{density-nonmeager2}). Indeed, let us show that
${(\omom , \N , \in)} \leq_T (\omom, \Phi_h, \in^\infty)$, where the measure on $\omom$ is the product $\prod_n\mu_n$, and $\mu_n$ is defined as follows: For each $n\in\omega$, fix a series
$\sum_k a^n_k=1$ such that, for all $k\in\omega$, $0<a^n_k<\frac{1}{2^{n+1}h(n)}$, and put
$\mu_n(A)=\sum_{k\in A}a^n_k$, for $A\subseteq\omega$.
 Let $\varphi_-:\omom\to\omom$ be the identity
and $\varphi_+(\phi)=\{x\in\omom\mid x\in^\infty\phi\}$, for $\phi\in\Phi_h$. (Note that $\varphi_+(\phi)$ is a null set) Now, the implication
$x\in^\infty\phi\implies x\in\varphi_+(\phi)$ is trivial, and shows that ${(\omom , \N , \in)} \leq_T (\omom, \Phi_h, \in^\infty)$, as desired.

So assume $r \in (0,1) \cap X$. We need to define functions $\varphi_- : 2^\omega \to D_X$ and $\varphi_+ : \Sym(\omega) \to \mathcal{N}$ such that 
for all $x \in 2^\omega$ and $\pi \in \Pi_d$, if $\varphi_- (x) R_r \pi$ then $x \in \varphi_+ (\pi)$. Consider the product measure $\mu$ on $2^\omega$ of the measure $m$ with $m ( \{ 1 \} ) = r$ and $m ( \{ 0 \} ) = 1 -r$. Let $X_n: 2^\omega \to 2$ be the random variable with $X_n (x) = x (n)$. 
Clearly the expected value of all $X_n$ is $r$. By the strong law of large numbers we see 
$$\mu ( \{ x \in 2^\omega : d(x) = r \} ) = \mu \left( \lim_{n \to \infty} {1 \over n} ( X_0 + ... + X_{n-1}) = r \right) = 1.$$
Also, if $\pi$ is a permutation, then $$\mu ( \{ x \in 2^\omega : d (\pi (x)) = r \} ) = \mu ( \pi^{-1} \{ x \in 2^\omega : d(x) = r \} ) = 1.$$

Now, let $$\varphi_- (x) = x \mbox{ if } x \in D_{ \{r\} }$$
($\varphi_-(x)$ is irrelevant if $x \notin D_{r}$) and 
$$\varphi_+ (\pi) = \{ y \in 2^\omega : d(y) \neq r \mbox{ or } d (\pi(y)) \neq r \}.$$ By the above discussion we see that $\mu (\varphi_+ (\pi)) = 0$ as required.

Let $x \in 2^\omega$ and $\pi \in \Sym (\omega)$, and assume $d(\pi(\varphi_- (x))) \neq r$. Then either $x \notin D_{ \{r\} }$ and $d(x) \neq r$ or $\varphi_- (x) = x$ and $d(\pi(x)) \neq r$. 
So $x \in \varphi_+ (\pi)$. This completes the proof of the theorem.
\end{proof}

Next recall that for functions $f,g \in\omom$, $f \leq^* g$ ($f$ is {\em eventually dominated by} $g$) if $f (n) \leq g(n)$ holds for all but finitely many $n
\in \omega$, and consider the relational system $(\omom, \omom, \not\geq^*)$. Then $\dd (\omom, \omom, \not\geq^*)$ is the unbounding number $\bb$ and
$\bb (\omom, \omom, \not\geq^*)$ is the dominating number $\dd$. We show:

\begin{theorem}   \label{density-unbounding}
If $\osc \in X$ or $\osc \notin Y$, then $(\omom,\omom, \not\geq^*) \leq_T (D_X , \Sym (\omega) , R_Y)$.
\end{theorem}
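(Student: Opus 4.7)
The plan is to mirror the strategy from the second half of the proof of Theorem~\ref{density-nonmeager}, but now designing, for each $g \in \omom$, a set $A_g$ whose oscillating density is \emph{preserved} under every permutation that is slow relative to $g$. Define $\varphi_+(\pi)(n) := 1 + \max(\pi[[0,n)] \cup \pi^{-1}[[0,n)])$, so that $g \geq^* \varphi_+(\pi)$ forces both $\pi[[0,n)] \subseteq [0, g(n))$ and $\pi^{-1}[[0,n)] \subseteq [0, g(n))$ for all sufficiently large $n$. Given $g$ (WLOG strictly increasing with $g(n) \geq n$), recursively choose $m_1 < m'_1 < m_2 < m'_2 < \cdots$ satisfying $m'_k \geq k \cdot g(m_k)$ and $m_{k+1} \geq (k+1) \cdot g(m'_k)$; setting $F_k := g(m_k)$, $F'_k := g(m'_k)$ and $F'_0 := 0$, put $A_g := \bigcup_{j \geq 0}[F'_j, F_{j+1})$.

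Two claims are then to be verified. First, $d(A_g) = \osc$: the growth conditions give $F'_{k-1}/F_k \leq 1/k$ and $F_k/F'_k \leq 1/k$, from which a direct computation of $|A_g \cap [0, F_k)|$ and $|A_g \cap [0, F'_k)|$ yields $d_{F_k}(A_g) \to 1$ and $d_{F'_k}(A_g) \to 0$. Second, if $g \geq^* \varphi_+(\pi)$, then $d(\pi[A_g]) = \osc$ as well. The key identity is $|\pi[A_g] \cap m| = |A_g \cap \pi^{-1}[[0, m))|$ combined with $\pi^{-1}[[0, m)] \subseteq [0, g(m))$: at $m = m_k$ the $m_k$-element subset $\pi^{-1}[[0, m_k)]$ of $[0, F_k)$ can omit at most $F_k - m_k$ elements, so $|\pi[A_g] \cap m_k| \geq m_k - |[0, F_k) \setminus A_g| \geq m_k - O(F'_{k-1})$, and $d_{m_k}(\pi[A_g]) \to 1$ by the choice $m_k \geq k\, F'_{k-1}$; at $m = m'_k$, one has $|\pi[A_g] \cap m'_k| \leq |A_g \cap [0, F'_k)| \leq F_k$, and the bound $m'_k \geq k\, F_k$ gives $d_{m'_k}(\pi[A_g]) \leq 1/k \to 0$.

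With these estimates in hand, setting $\varphi_-(g) := A_g$ gives the desired Tukey reduction in the case $\osc \in X$: $A_g \in D_X$ and $d(\pi[A_g]) = \osc = d(A_g)$ for any slow $\pi$ forces the failure of $R_Y$ regardless of $Y$. In the case $\osc \notin Y$ with $\osc \notin X$, the set $A_g$ itself lies outside $D_X$, so the construction must be adjusted to land in $D_X$ while preserving the property that slow $\pi$ gives $d(\pi[A_g]) \in \{d(A_g), \osc\}$; for instance, if $0 \in X$ one replaces $A_g$ by the very sparse $\{a_k\}$ with $a_{k+1} > g(2^{k+1})$, for which the argument analogous to Theorem~\ref{density-nonmeager}(2) gives $d(\pi[A_g]) = 0 = d(A_g)$ for every slow $\pi$, and dually if $1 \in X$. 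The main obstacle is the quantitative balance in the block construction: the factor-of-$k$ slack in the conditions $m'_k \geq k \cdot g(m_k)$ and $m_{k+1} \geq (k+1)\cdot g(m'_k)$ is exactly what forces the ``side terms'' $O(F'_{k-1})$ and $O(F_{k-1})$ in the density estimates to become negligible at the test scales $m_k$ and $m'_k$.
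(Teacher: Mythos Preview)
Your treatment of the case $\osc \in X$ is correct and follows essentially the same strategy as the paper: build a block-interval set whose density oscillates between $0$ and $1$, with block endpoints defined via iterates of $g$, and show that any permutation $\pi$ with $g \geq^* \varphi_+(\pi)$ is too slow to destroy this oscillation. The paper uses $\varphi_-(g) = \bigcup_n [i^g_{4n}, i^g_{4n+2})$ with $i^g_{n+1} = g(i^g_n)$, while you use a somewhat finer choice of endpoints; the density estimates are the same in spirit.

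The genuine gap is in the case $\osc \notin Y$. You correctly observe that one needs $\varphi_-(g) \in D_X$ with $d(\pi[\varphi_-(g)]) \in \{d(\varphi_-(g)), \osc\}$ for slow $\pi$, and you dispose of $0 \in X$ or $1 \in X$ (which, as the paper also notes, follows from Theorem~\ref{density-nonmeager2} via $(\omom,\omom,\not\geq^*) \leq_T (\omom,\Phi_h,\in^\infty)$). But you do not address the remaining subcase $X \subseteq (0,1)$ at all, and this is by far the hardest part of the theorem. Here one must produce, for each $g$, a set of density exactly some fixed $r \in X \cap (0,1)$ such that every slow permutation either preserves the density $r$ or sends it to $\osc$. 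The paper's construction for this case is substantial: for $r = \tfrac12$ it alternates between even and odd numbers on $g$-defined blocks, and then performs a delicate case analysis according to whether $\pi$ is ``big'' or ``small'' on the evens (meaning $|\pi[E]\cap m|/m$ infinitely often exceeds $\tfrac12 + \tfrac1k$ or falls below $\tfrac12 - \tfrac1k$); in the big/small cases one shows $d(\pi[\varphi_-(g)]) = \osc$, and otherwise $d(\pi[\varphi_-(g)]) = \tfrac12$. For general $r$ the argument is considerably more elaborate, using a family of sets $E^c_v$ indexed by $\ell_c$-element subsets of $2^c$. None of this machinery is hinted at in your proposal, and the simple block constructions you use in the other cases do not adapt to produce a set of density $r \in (0,1)$ with the required robustness.
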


We note that since $(\omom, \omom, \not\geq^* ) \leq_T (\omom, \Phi_h , \in^\infty)$, this is also true if $0\in X$ or $1 \in X$, by Theorem~\ref{density-nonmeager2}.
However, it will follow from Theorem~\ref{density-lessthanb} that this (consistently) fails if $X \sub (0,1)$ and $\osc \in Y$. 

\begin{corollary}   \label{density-unbounding-cor}
If $\osc \in X$ or $\osc \not\in Y$, then $\bb \leq \ddd_{X,Y}$ and $\ddd_{X,Y}^\perp \leq \dd$.
\end{corollary}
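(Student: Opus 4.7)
The plan is to exhibit Tukey morphisms $\varphi_- \colon \omom \to D_X$ and $\varphi_+ \colon \Sym(\omega) \to \omom$ such that $f \geq^* \varphi_+(\pi)$ forces $\varphi_-(f)\,R_Y\,\pi$ to fail. I would set $\varphi_+(\pi)(n) := \max\{\pi(k), \pi^{-1}(k) : k \leq n\}$, so that $f \geq^* \varphi_+(\pi)$ gives, for almost every $N$, both $\pi^{-1}([0,N)) \subseteq [0, f(N))$ and $[0, f^{-1}(N)) \subseteq \pi^{-1}([0,N))$; writing $\pi^{-1}([0,N)) = [0, f^{-1}(N)) \sqcup S_N$, the ``adversarial'' part $S_N$ is an unknown $(N - f^{-1}(N))$-element subset of the transit zone $[f^{-1}(N), f(N))$.

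Given $f$ (without loss of generality strictly increasing), I would recursively build $0 < c_0 < d_0 < c_1 < d_1 < \cdots$ with $d_n \geq f^2(c_n)$ and $c_{n+1} \geq f^2(d_n)$, so that both the ``block'' ranges $J_n^{\mathrm{blk}} := [f(c_n), f^{-1}(d_n)]$ and the ``gap'' ranges $J_n^{\mathrm{gap}} := [f(d_{n-1}), f^{-1}(c_n)]$ are nonempty and tend to infinity. The key feature is that for $N \in J_n^{\mathrm{blk}}$ the transit zone lies inside the block $[c_n, d_n)$, and for $N \in J_n^{\mathrm{gap}}$ it lies inside the gap $[d_{n-1}, c_n)$; in either case $|\pi[\varphi_-(f)] \cap N|$ becomes independent of $S_N$.

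In Case 1 ($\osc \in X$) I would take $\varphi_-(f) := \bigcup_n [c_n, d_n)$. The rapid growth gives $d(\varphi_-(f)) = \osc$ (witnessed by $\underline d = 0$ at $N = c_n$ and $\overline d = 1$ at $N = d_n$), so $\varphi_-(f) \in D_X$. For $f \geq^* \varphi_+(\pi)$, on $N \in J_n^{\mathrm{blk}}$ we will have $S_N \subseteq \varphi_-(f)$ and hence $|\pi[\varphi_-(f)] \cap N|/N \to 1$ along $N = f(c_n)$; on $N \in J_n^{\mathrm{gap}}$ we will have $S_N \cap \varphi_-(f) = \emptyset$ and hence $|\pi[\varphi_-(f)] \cap N|/N \to 0$ along $N = f(d_{n-1})$. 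Therefore $d(\pi[\varphi_-(f)]) = \osc = d(\varphi_-(f))$, and $\varphi_-(f)\,R_Y\,\pi$ fails via the ``$\neq d(A)$'' clause of $R_Y$.

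In Case 2 ($X \subseteq (0,1)$ with $\osc \notin Y$) I would pick $r \in X$ and modify $\varphi_-(f)$ to have density exactly $r$ while retaining enough block/gap structure to produce two distinct subsequential limits of $|\pi[\varphi_-(f)] \cap N|/N$; concretely I would insert a density-$r$ background into the gaps and thin the blocks so that the total density settles at $r$, while checking that the block/gap dichotomy still forces $|\pi[\varphi_-(f)] \cap N|/N$ to take values at $N \in J_n^{\mathrm{blk}}$ and at $N \in J_n^{\mathrm{gap}}$ that cannot converge to a common limit. Then $d(\pi[\varphi_-(f)]) = \osc \notin Y$ and $R_Y$ fails again. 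The technical heart of the argument is this Case 2: the naive ``block of density $1$'' recipe forces $d(\varphi_-(f))$ to oscillate between $0$ and $1$, and engineering a well-defined density $r \in (0,1)$ for $\varphi_-(f)$ without sacrificing the rigid block/gap contribution to $\pi[\varphi_-(f)]$ demands a careful calibration of the inserted densities in blocks versus gaps.
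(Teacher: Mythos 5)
Your Case~1 (the $\osc\in X$ branch) is essentially the paper's construction: you take $\varphi_-(f)$ to be a union of ``fast'' intervals tied to the iterates of $f$, and exploit that the transit zone $[f^{-1}(N),f(N))$ is swallowed by a single block or a single gap to make $|\pi[\varphi_-(f)]\cap N|$ predictable. That part is fine.

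Case~2 ($X\subseteq(0,1)$, $\osc\notin Y$) has a genuine gap, and it is not merely a calibration issue. You aim to \emph{force} $d(\pi[\varphi_-(f)])=\osc$ for every $\pi$ with $f\geq^*\varphi_+(\pi)$, but that is impossible. Take $\pi=\mathrm{id}$: then $\varphi_+(\mathrm{id})(n)=n$, so any sufficiently large $f$ satisfies $f\geq^*\varphi_+(\mathrm{id})$, yet $\pi[\varphi_-(f)]=\varphi_-(f)$, which by your own construction has density $r\in(0,1)$, not $\osc$. More generally, for any permutation that does not disturb densities (for instance one satisfying the paper's condition~B from Section~5), $d(\pi[\varphi_-(f)])=r$. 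So the implication ``$f\geq^*\varphi_+(\pi)\Rightarrow d(\pi[\varphi_-(f)])=\osc$'' is simply false, and the proof of the Tukey reduction cannot go through on that claim. The correct conclusion one should be aiming for is the \emph{disjunction}: $d(\pi[\varphi_-(f)])\in\{r,\osc\}$. In the first case $R_Y$ fails because $d(\pi[\varphi_-(f)])=d(\varphi_-(f))$ violates the ``$\neq$'' clause, and in the second it fails because $\osc\notin Y$. Establishing this disjunction requires a nontrivial case split on $\pi$ --- the paper distinguishes ``big'', ``small'', and ``neither big nor small'' permutations (see the proof of Theorem~\ref{density-unbounding}), using $\varphi_+(\pi)$ to encode witnessing scales $m^\pi_n$ depending on which alternative $\pi$ falls into; in the ``neither'' case the permuted density stays near $r$, and only in the ``big''/``small'' cases does one obtain oscillation. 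Your outline never introduces such a classification, so the argument cannot close.

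A second, more local problem with your Case~2 sketch: the very feature you rely on, namely that $|\pi[\varphi_-(f)]\cap N|$ is independent of the adversarial set $S_N$, is lost the moment $\varphi_-(f)$ has density strictly between $0$ and $1$ on the transit zone. In Case~1 it works because $S_N$ lies entirely in a block that is completely inside $\varphi_-(f)$, or in a gap completely disjoint from $\varphi_-(f)$. If instead you put ``a density-$r$ background'' in the gaps and ``thin'' the blocks, then $|\varphi_-(f)\cap S_N|$ can range over a nontrivial interval depending on which $(N-f^{-1}(N))$-element subset of the transit zone $S_N$ happens to be, so the cleanly separated block/gap limits you invoke do not exist. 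This is precisely why the paper's Case~2 replaces the all-or-nothing blocks by a more elaborate combinatorial pattern (the sets $E^c_v$ built from finely partitioned intervals $I^a_b$), engineered so that the permuted density can be controlled up to a small error regardless of which subset of each interval $\pi$ pulls across, while still leaving room for oscillation exactly when $\pi$ is ``big'' or ``small''.
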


\begin{proof}[Proof of Theorem~\ref{density-unbounding}]
We need to define $\varphi_- : \omom \to D_X$ and $\varphi_+ : \Sym (\omega) \to \omom$ such that for all $g \in \omom$ and $\pi \in \Sym (\omega)$, if $\varphi_- (g) R_Y \pi$
then $g \not\geq^* \varphi_+ (\pi)$. First assume $\osc \in X$.

Let $\varphi_+ (\pi) (n) =  \max \{ \pi (k), \pi^{-1} (k) : k \leq n \} + 2^n$ for $\pi \in \Sym (\omega)$.
Assume $g(n)\geq 2^n$ for almost all $n$. Define $\{ i^g_n : n \in \omega \}$ as follows: let $n_0$ be minimal such that $g(n) \geq 2^n$ for all $n \geq n_0$.
Let $i^g_0 = 0$ and
\[ i_{n+1}^g = \left\{ \begin{array}{ll} 2^{i^g_n} & \mbox{ if } n < n_0 \\
   g(i^g_n) & \mbox {if } n \geq n_0 \end{array} \right. \]
Finally let $\varphi_- (g) = \bigcup_{n \in\omega} [ i^g_{4n} , i^g_{4n + 2} )$. It is easy to see that $d (\varphi_- (g)) = \osc$; in fact, $\underline{d} (\varphi_- (g)) = 0$
and $\overline{d} (\varphi_- (g)) =1$. (For other $g$ the definition is irrelevant.)

Assume $g \in \omom$ and $\pi \in \Sym (\omega)$ are such that $g \geq^* \varphi_+ (\pi)$. Then $g (n) \geq 2^n$ for almost all $n$ so that the above definition
of $\varphi_- (g)$ applies. Let $n_0$ be such that $g(n) \geq \varphi_+ (\pi ) (n)$ for all $n \geq n_0$. Fix such $n$. Note that if $k < i_{4n}^g$ then $\pi (k)
< \varphi_+ (\pi) (k) < \varphi_+ (\pi) (i^g_{4n}) \leq g (i^g_{4n}) = i_{4n +1}^g$. Hence $\pi (k) \in i_{4n+1}^g$. The same argument shows that if $k < i^g_{4n + 1}$ then
$\pi^{-1} (k) \in i_{4n + 2}^g$, that is, if $k \geq i_{4n + 2}$ then $\pi (k) \geq i^g_{4n+1}$. Therefore it follows that $| \pi [ \varphi_- (g) ] \cap i_{4n + 1}^g |
= | \varphi_- (g) \cap i^g_{4n + 1} |$, and this value converges to $1$. Similarly, $| \pi [ \varphi_- (g) ] \cap i_{4n + 3}^g |
= | \varphi_- (g) \cap i^g_{4n + 3} |$, which converges to $0$. Thus we still have $\underline{d} ( \pi [ \varphi_- (g) ] ) = 0$ and $\overline{d}
(\pi [ \varphi_- (g) ] ) = 1$, that is, $d (\pi [ \varphi_- (g)]) = \osc$, and $\varphi_- (g) R_Y \pi$ fails, as required.

Next assume $\osc \notin Y$. Note that by the first half of the proof  and the comment after the statement of the theorem, there is nothing to show if $\osc \in X$
or $0 \in X$ or $1 \in X$. Hence assume $X \sub ( 0,1 )$. To illustrate the main idea, first consider the case ${1 \over 2} \in X$. 

\underline{Case 1. $r = {1 \over 2}$.} Let $E$ be the even numbers, and $O$, the odd numbers. Say $\pi \in\Sym (\omega)$ is {\em big on $E$} if there is
$k = k_\pi$ such that there are infinitely many $m$ such that 
\begin{equation} { | \pi [ E ] \cap m | \over m } > {1\over 2 } + {1 \over k} \label{eqn1}   \end{equation}
Similarly, $\pi \in\Sym (\omega)$ is {\em small on $E$} if there is
$k = k_\pi$ such that there are infinitely many $m$ such that 
\begin{equation} { | \pi [ E ] \cap m | \over m } < {1\over 2 } - {1 \over k} \label{eqn2}   \end{equation}
If $\pi$ is big on $E$ (small on $E$, respectively), define 
\[ m^\pi_n = \min \left\{ m \geq 2^n : m \mbox{ satisfies (\ref{eqn1}) } (m \mbox{ satisfies (\ref{eqn2}), respectively}) \right\} \]
If $\pi$ is neither big nor small on $E$, define 
\[ m^\pi_n =  \min \left\{ m \geq 2^n :  {1\over 2} - {1\over 2^n}  \leq  { | \pi [ E ] \cap m | \over m } \leq {1\over 2} + {1 \over 2^n} \right\} \]
Finally, for any $\pi \in \Sym (\omega)$, let
\[ \varphi_+ (\pi)(n) = \min \left\{ u > m^\pi_n : \pi^{-1} [ m^\pi_n] \sub u \right\} \]

Next assume $g(n) \geq 2^n$ for almost all $n$. Define $\{ i^g_n : n \in \omega \}$ as follows: let $n_0$ be minimal such that $g(n) \geq 2^n$ for all
$n \geq n_0$. Let $i^g_0 = 0$ and
\[ i^g_{n+1} = \left\{ \begin{array}{ll} 2^{i^g_n} & \mbox{ if } n < n_0 \\
   g( i^g_n) & \mbox{ if } n \geq n_0 \end{array} \right. \]
Define 
\[ \varphi_- (g) = \bigcup_{n\in\omega} \left( \left[ i^g_{2n}, i^g_{2n+1} \right) \cap E \right) \cup \bigcup_{n\in\omega} \left( \left[ i^g_{2+1} , i^g_{2n+2} \right) \cap O \right) \]
Clearly $d (\varphi_- (g) ) = {1 \over 2}$.

Assume $g \in\omom$ and $\pi \in \Sym (\omega)$ are such that $g \geq^* \varphi_+ (\pi)$. We need to show that either $d (\pi [ \varphi_- (g)]) = {1 \over 2}$ or
$d (\pi [ \varphi_- (g)]) = \osc \notin Y$: the point is that $\varphi_- (g) R_Y \pi$ fails in both cases. Clearly $g (n) \geq 2^n$ for almost all $n$, and the above definition
of $\varphi_- (g)$ applies. Let $n_0$ be such that $g(n) \geq \varphi_+  (\pi) (n)$ for all $n \geq n_0$.
 
First assume $\pi$ is big on $E$. Fix $2n \geq n_0$. Then 
\[ 2^{i^g_{2n}} \stackrel{(1)}{ \leq} m_{i^g_{2n}}^\pi  < \varphi_+ (\pi) (i^g_{2n}) \stackrel{(2)}{\leq} g(i^g_{2n}) = i^g_{2n+1} \]
and therefore, by (2) and~(\ref{eqn1}),
\[ {\left | \pi [E \cap i^g_{2n+1} ]\cap m^\pi_{i^g_{2n}} \right| \over m^\pi_{i^g_{2n}} } =  { \left| \pi [E  ]\cap m^\pi_{i^g_{2n}} \right| \over m^\pi_{i^g_{2n}} } > {1 \over 2} + {1 \over k_\pi} \]
By (1) and definition of $\varphi_- (g)$, we see that for large enough $n$,
\[  { \left| \pi [  \varphi_- (g) ]\cap m^\pi_{i^g_{2n}} \right| \over m^\pi_{i^g_{2n}} } > {1 \over 2} + {1 \over 2 k_\pi} \]
For the same reason, for large enough $n$,
\[  { \left| \pi [  \varphi_- (g) ]\cap m^\pi_{i^g_{2n + 1}} \right| \over m^\pi_{i^g_{2n + 1}} } < {1 \over 2} - {1 \over 2 k_\pi} \]
Thus $d (\pi [ \varphi_- (g)]) = \osc$, as required. The proof in case $\pi$ is small on $E$ is analogous.

Finally assume $\pi$ is neither big nor small on $E$. In this case it is easy to see that the numbers
\[ { \left| \pi [E \cap i^g_{n+1} ]\cap m^\pi_{i^g_{n}} \right| \over m^\pi_{i^g_{n}} } =  { \left| \pi [E  ]\cap m^\pi_{i^g_{n}} \right| \over m^\pi_{i^g_{n}} } \]
converge to ${1 \over 2}$, and therefore the same is true for the numbers
\[  { \left| \pi [  \varphi_- (g) ]\cap m^\pi_{i^g_{n}} \right| \over m^\pi_{i^g_{n}} } \]
Hence either $d (\pi [ \varphi_- (g)]) = {1 \over 2}$ or $d (\pi [ \varphi_- (g)]) = \osc$, as required, and we are again done.

\underline{Case 2. Arbitrary $r \in (0,1)$.}   
Let $(\ell_b : b \in \omega)$ be a sequence such that ${\ell_b \over 2^b}$ converges to $r$. We may assume that $| r - {\ell_b \over 2^b}| <
{1 \over 2^b}$. Recursively define $(j_b : b \in \omega)$ such that
$j_0 = 0$ and $j_{b+1} = j_b + 2^{2b}$. Partition the interval $J_b = [j_b, j_{b+1})$ into $2^b$ many intervals $I_b^a$, $a \in 2^b$, of length $2^b$.
For $c \in \omega$, define $w_c := \left( \begin{array}{c} 2^c \\ \ell_c \end{array}  \right)$. 
Note that $w_c$ is the number of $\ell_c$-element subsets of $2^c$. Let $e_c : w_c \to \P (2^c)$ be a bijection from $w_c$ to the $\ell_c$-element subsets
of $2^c$ such that $e_c (0) = \ell_c$, that is, $e_c (0)$ consists of the first $\ell_c$ elements of $2^c$. Next, for $b \geq c$ and $a \in 2^b$, partition $I^a_b$ into $2^c$ many intervals 
$(L^{a,c}_{b,d} : d < 2^c  )$ of length $2^{b -c}$. For $v \in w_c$, define $E^c_v \sub \omega$ such that $E^c_v \cap j_c = \emptyset$ and 
\[   E^c_v \cap I^a_b = \bigcup \{ L^{a,c}_{b,d} : d \in e_c (v) \} \]
for $b \geq c$ and $a \in 2^b$. Note that $d (E^c_v) = {\ell_c \over 2^c}$. 
Similar to Case 1,  say $\pi \in \Sym (\omega)$ is {\em big} ({\em small}, respectively) if there is $k = k_\pi$ such that for every $c_0$ there is $c \geq c_0$ such
that 
\begin{equation} \exists^\infty m \;\;\; { | \pi [E^c_0] \cap m | \over m } > {\ell_c \over 2^c}  + {1\over k} \left( < {\ell_c \over 2^c} - {1 \over k} \mbox{ respectively}\right) 
\label{eqn3}  \end{equation}
Note that this means (in the big case) that for such $c$ we can find $v(c) \in w_c$ such that for infinitely many $m$,
\begin{equation} { | \pi [E^c_0] \cap m | \over m } > {\ell_c \over 2^c}  + {1\over k} \mbox{ and } { \left| \pi [E^c_{v(c)} ] \cap m \right| \over m } < {\ell_c \over 2^c} 
\label{eqn4} \end{equation}
and similarly for the small case. If $\pi$ is big (or small) and $c < n$ satisfies~(\ref{eqn3}), define 
\[ m^\pi_{n,c} = \min \{ m \geq 2^n : m \mbox{ satisfies (\ref{eqn4})} \} \]
and let
\[  m^\pi_n = \max \{ m^\pi_{n,c} : c < n \mbox{ and } c \mbox{ satisfies~(\ref{eqn3})} \} \]
If $\pi$ is neither big nor small, there is a sequence $(k_c : c \in \omega)$ going to $\infty$ such that~(\ref{eqn3}) fails for $c$ with $k = k_c$. Then define
\begin{equation} m^\pi_n =  \min \left\{ m \geq 2^n :  {\ell_c \over 2^c} - {1\over k_c}  \leq  { | \pi [ E^c_0 ] \cap m | \over m } \leq {\ell_c \over 2^c} + {1 \over k_c} \right\} 
\label{eqn5}  \end{equation}
Finally, for $\pi \in \Sym (\omega)$ and $n \in \omega$,  $\varphi_+ (\pi) (n) := \min \left\{ u > m^\pi_n : \pi^{-1} [ m^\pi_n] \sub u \right\}$ is defined exactly as in the special case.


Next assume that $g(n) \geq 2^n$ for almost all $n$. Define $\{ i^g_n : n \in \omega \}$ as follows: let $n_0$ be minimal such that $g(n) \geq 2^n$ for all
$n \geq n_0$. Let $i^g_0 = 0$ and
\[ i^g_{n+1} = \left\{ \begin{array}{ll} 2^{i^g_n} & \mbox{ if } n < n_0 \\
   g( i^g_n) & \mbox{ if } n \geq n_0 \end{array} \right. \]
Let $( K_c : c \in \omega )$ be the interval partition of $\omega$ into intervals of length $w_c$. If $n$ is the $v$-th element of $K_c$
($v < w_c$), $i^g_n \leq b < i^g_{n + 1}$, and $a \in 2^b$, 
define \[ \varphi_- (g) \cap I^a_b = \bigcup \{ L^{a,c}_{b,d} : d \in e_c (v) \} \]
Clearly $d (\varphi_- (g)) = r$.  Also note that $\varphi_- (g) \cap [i^g_n , i^g_{n+1} ) = E^c_v \cap [i^g_n , i^g_{n+1} ) $ $(\star)$.


Given $g \in\omom$ and $\pi \in \Sym (\omega)$ with $g \geq^* \varphi_+ (\pi)$, we need to show again that either $d (\pi [ \varphi_- (g)]) = r$ or
$d (\pi [ \varphi_- (g)]) = \osc \notin Y$. Let $n_0$ be such that $g(n) \geq \varphi_+  (\pi) (n)$ for all $n \geq n_0$. Let $n(c)$ be the $0$-th element of
$K_c$ for $c \in \omega$. Assume $n(c) \geq n_0$. In case $\pi$ is neither
big nor small, we see by~(\ref{eqn5}) that the numbers
\[ { \left| \pi [E^c_0 \cap i^g_{n(c) +1} ]\cap m^\pi_{i^g_{n(c)}} \right| \over m^\pi_{i^g_{n(c)}} } =  { \left| \pi [E^c_0  ]\cap m^\pi_{i^g_{n(c)}} \right| \over m^\pi_{i^g_{n(c)}} } \]
converge to $r$ as $c$ goes to infinity, and therefore, by $(\star)$, so do the numbers
\[ \left| \pi [ \varphi_- (g) ] \cap m^\pi_{i^g_{n(c)}} \right| \over m^\pi_{i^g_{n(c)}} \]
Thus either $d (\pi [ \varphi_- (g)]) = r$ or $d (\pi [ \varphi_- (g)]) = \osc $ as required. So assume without loss
of generality that $\pi$ is big. (The proof in case $\pi$ is small is analogous.) 
For $n(c) \geq n_0$, we see by~(\ref{eqn4}) that 
\[ { \left| \pi [E^c_0 \cap i^g_{n (c) +1} ]\cap m^\pi_{i^g_{n (c)}} \right| \over m^\pi_{i^g_{n (c)}} }  =  { \left| \pi [E^c_0  ]\cap m^\pi_{i^g_{n(c)}} \right| \over m^\pi_{i^g_{n(c)}} } 
> {\ell_c \over 2^c}   + {1 \over k_\pi} \]
and, using additionally $(\star)$, we infer that $\overline{d} (\pi [ \varphi_- (g)]) \geq r + {1 \over k_\pi}$. On the other hand, 
letting $n'(c)$ be the $v(c)$-th element of $K_c$ for $c \in \omega$, (\ref{eqn4}) also implies that 
\[ { \left| \pi [E^c_{v(c)} \cap i^g_{n' (c) +1} ]\cap m^\pi_{i^g_{n' (c)}} \right| \over m^\pi_{i^g_{n '(c)}} }  =  { \left| \pi [E^c_{v(c)}  ]\cap m^\pi_{i^g_{n' (c)}} \right| \over m^\pi_{i^g_{n' (c)}} } 
< {\ell_c \over 2^c}   \]
so that, by $(\star)$,  $\underline{d} (\pi [ \varphi_- (g)]) \leq r $.
Thus $d (\pi [ \varphi_- (g)]) = \osc $, and the proof of the theorem is complete.
\end{proof}

Next consider the relational system $(\twoom , \M , \in)$. Note that this is the dual of the system considered in Section 2, $(\twoom, \M ,\in) = (\M ,\twoom ,\not\ni)^\perp$.
In particular, $\dd (\twoom , \M ,\in) = \cov (\M)$ and $\bb (\twoom , \M,  \in) = \non (\M)$.

\begin{theorem}    \label{density-covmeager}
If $\osc \in X$ or $\osc \notin Y$, then $(\twoom, \M , \in) \leq_T (D_X , \Sym (\omega), R_Y)$.
\end{theorem}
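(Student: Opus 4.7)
The goal is to construct Borel maps $\varphi_- : \twoom \to D_X$ and $\varphi_+ : \Sym(\omega) \to \M$ witnessing the Tukey reduction, so that $\varphi_-(x)\,R_Y\,\pi$ implies $x \in \varphi_+(\pi)$. In all cases the plan is to ensure that, for a comeager set of $x$, $\pi$ forces $d(\pi[\varphi_-(x)])$ to be either $\osc$ or equal to $d(\varphi_-(x))$; under the hypothesis $\osc \in X$ or $\osc \notin Y$, both outcomes make $\varphi_-(x)\,R_Y\,\pi$ fail. The meagerness of $\varphi_+(\pi)$ will come from the key observation that $|\pi^{-1}[m]| = m$ is finite, so $\pi^{-1}[m]$ meets only finitely many blocks of any fixed interval partition, allowing us to extend any finite initial segment $s \in \twolom$ to an $s'$ that forces the density of $\pi[\varphi_-(x)]$ at $m$ to be very close to $0$ or $1$.

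\emph{Case $\osc \in X$.} Fix $(k_n)$ with $k_{n+1}/k_n \to \infty$ (say $k_{n+1} = 2^{k_n}$) and put $I_n = [k_n, k_{n+1})$. For $x$ with infinitely many zeros and ones define $\varphi_-(x) = \bigcup\{I_n : x(n) = 1\}$; fast growth gives $\underline d(\varphi_-(x)) = 0$ and $\overline d(\varphi_-(x)) = 1$, so $d(\varphi_-(x)) = \osc \in X$. Use a fixed $A^* \in D_X$ for the countably many exceptional $x$'s. Take $\varphi_+(\pi)$ to be the exceptional set together with $\{x : d(\pi[\varphi_-(x)]) \in [0,1]\}$. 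For meagerness, given $s \in \twolom$, $M \in \omega$, and rational $\epsilon > 0$, pick $m > M$ with $k_{|s|}/m < \epsilon$ and set $N_m = \{n \geq |s| : I_n \cap \pi^{-1}[m] \neq \emptyset\}$, which is finite. Extending $s$ to $s'$ with $s'(n) = 0$ on $N_m$ forces $|\pi[\varphi_-(x)] \cap m| \leq k_{|s|}$, hence density $< \epsilon$, for every $x \in [s']$; the symmetric choice $s'(n) = 1$ on $N_m$ forces density $> 1 - \epsilon$. Intersecting these open dense sets over $\epsilon$ and $M$ yields a comeager set on which $\underline d(\pi[\varphi_-(x)]) = 0$ and $\overline d(\pi[\varphi_-(x)]) = 1$. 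Then for $x \notin \varphi_+(\pi)$ we have $d(\pi[\varphi_-(x)]) = \osc = d(\varphi_-(x))$, so $\varphi_-(x)\,R_Y\,\pi$ fails regardless of $Y$.

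\emph{Case $\osc \notin Y$ with $\osc \notin X$.} Now $X \subseteq [0,1]$ and $\varphi_-(x)$ must carry a genuine density. If $\{0, 1\} \cap X \neq \emptyset$, say $0 \in X$, set $\varphi_-(x) = \{k_n : x(n) = 1\}$ for rapidly growing $k_n$ (using $A^* \in D_X$ for eventually constant $x$); then $d(\varphi_-(x)) = 0 \in X$, and the same finiteness-of-$\pi^{-1}[m]$ argument shows that for comeager $x$, $\pi[\varphi_-(x)]$ has density either $0 = d(\varphi_-(x))$ or $\osc \notin Y$, so $\varphi_-(x)\,R_Y\,\pi$ fails. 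If $X \subseteq (0,1)$, fix $r \in X$ and use the blocks $E^c_v$ of Case~2 of the proof of Theorem~\ref{density-unbounding} with $\ell_c/2^c \to r$; now let $x$ dictate the choice of the subset index $v \in w_c$ at each scale $c$, and choose the growth of scales slowly enough that $d(\varphi_-(x)) = r$ holds exactly for every $x$. The big/small/neither dichotomy from that proof passes to a Baire-generic form: if $\pi$ is neither big nor small the density is forced to $r = d(\varphi_-(x))$; if $\pi$ is big or small, Baire-generic toggling between $v = 0$ and $v = v(c)$ at infinitely many scales forces $\overline d(\pi[\varphi_-(x)]) > r$ and $\underline d(\pi[\varphi_-(x)]) < r$, so $d(\pi[\varphi_-(x)]) = \osc \notin Y$.

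\emph{Main obstacle.} The most delicate subcase is $X \subseteq (0,1)$ with $\osc \notin Y$: we need $d(\varphi_-(x)) = r$ to hold for \emph{every} $x$ (ruling out the extravagantly fast-growing blocks of the first case) while retaining enough combinatorial room in the choice of the $v$'s to force $\pi[\varphi_-(x)]$ to oscillate when $\pi$ is big or small. This means translating the eventual-domination arguments of the proof of Theorem~\ref{density-unbounding} Case~2 into Baire-generic ones that rest only on finiteness of $\pi^{-1}[m]$ and on the freedom of extending finite initial segments of $x$ on the relevant finitely many block indices.
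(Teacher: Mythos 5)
Your first two subcases are fine, though not quite what the paper does.  For $\osc \in X$ the paper simply takes $\varphi_-(x)=x$ on the comeager set $D_{\{\osc\}}$ and $\varphi_+(\pi)=\{x : d(x)\neq\osc\text{ or }d(\pi[x])\neq\osc\}$; your construction with fast-growing blocks $I_n$ is a correct but more laborious route to the same thing.  For $\{0,1\}\cap X\neq\emptyset$ the paper just reuses Theorem~\ref{density-continuum} via the trivial reduction $(\twoom,\M,\in)\leq_T(\cc,[\cc]^{\aleph_0},\in)$; your direct Baire-category argument with $\varphi_-(x)=\{k_n:x(n)=1\}$ is a valid alternative, and the finiteness-of-$\pi^{-1}[m]$ observation does indeed give $\underline d(\pi[\varphi_-(x)])=0$ comeagerly, which suffices.

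The gap is exactly where you flag it: the subcase $X\subseteq(0,1)$, $\osc\notin Y$.  Two things go wrong with your sketch.  First, ``let $x$ dictate the choice of the subset index $v\in w_c$ at each scale $c$'' gives far too little freedom: in the paper's proof, $x$ independently selects an $\ell_b$-element pattern on \emph{each} block $I^a_b$, $a\in 2^b$ (formally $x\in\prod f$ with $f$ constant $w_b$ on $[i_b,i_{b+1})$, and $\varphi_-(x)\cap I^a_b = g^a_b[e_b(x(i_b+a))]$).  This per-block freedom is essential because the greedy extension must respond to the set $z^a_b=\pi^{-1}[j_{b_1}]\cap I^a_b$ separately for each $a$ --- blocks at the same scale are hit by $\pi^{-1}[j_{b_1}]$ in completely different ways.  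If $x$ fixes one $v$ per scale, the dense-open argument collapses.  Second, the paper does \emph{not} transplant the big/small/neither trichotomy from Theorem~\ref{density-unbounding} --- and you shouldn't either.  In the Baire setting that dichotomy is unnecessary and, in the form you state it, unsupported: the correct move is a direct greedy construction.  Given $s$, $b_0$ and $\epsilon>0$, one picks $b_1$ with $\pi[j_{b_0}]\subseteq j_{b_1}$, then $b_2$ with $\pi^{-1}[j_{b_1}]\subseteq j_{b_2}$, and extends $s$ block by block, at each $I^a_b$ choosing the $\ell_b$-element set $y^a_b$ to be contained in $z^a_b$ (if $|z^a_b|\geq\ell_b$) or to contain $z^a_b$ (otherwise), maintaining the inductive invariant $c_k/d_k\geq r-\epsilon$ for the running counts of hits versus visits to $\pi^{-1}[j_{b_1}]$.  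This produces, for every $s$, a $t\supseteq s$ with $|\pi[\varphi_-(t)]\cap j_{b_1}|/j_{b_1}\geq r-\epsilon$, and symmetrically $\leq r+\epsilon$, yielding comeagerly $\overline d(\pi[\varphi_-(x)])\geq r$ and $\underline d(\pi[\varphi_-(x)])\leq r$.  No classification of $\pi$ enters at all.  Your proposal names the obstacle correctly but does not supply the construction that overcomes it.
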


Note that -- as for Theorem~\ref{density-unbounding} -- the assumption is optimal because if $\osc \notin X$ and $\osc \in Y$ we are in the situation of the first half of 
Theorem~\ref{density-nonmeager2} and $(D_X, \Sym(\omega),R_Y) \leq_T (\M ,\twoom ,\not\ni)$ holds.

\begin{corollary}   \label{density-covmeager-cor}
If $\osc \in X$ or $\osc \notin Y$, then $\cov (\M) \leq \ddd_{X,Y}$ and $\ddd_{X,Y}^\perp \leq \non (\M)$.
\end{corollary}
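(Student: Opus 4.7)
The plan is to exhibit the Tukey reduction directly by choosing $\varphi_-:\twoom\to D_X$ and setting $\varphi_+(\pi):=\{x\in\twoom : \varphi_-(x) R_Y \pi\}$, then verifying that $\varphi_+(\pi)$ is meager for every $\pi$. I will split on the hypothesis.

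\textbf{Case 1} ($\osc\in X$). Let $(J_n)$ be an interval partition of $\omega$ with $j_n:=\min J_n$ satisfying $j_{n+1}\geq 2^{j_n}$, and put $\varphi_-(x):=\bigcup\{J_n : x(n)=1\}$ whenever this has density $\osc$ (defaulting to a fixed $A_0\in D_X$ of density $\osc$ otherwise). The bulk of the work is to show that for every $\pi$, $N\in\omega$, and $\epsilon>0$, the open sets $O^+_{N,\epsilon}:=\{x:\exists M\geq N,\ |\pi[\varphi_-(x)]\cap M|/M>1-\epsilon\}$ and $O^-_{N,\epsilon}:=\{x:\exists M\geq N,\ |\pi[\varphi_-(x)]\cap M|/M<\epsilon\}$ are dense in $\twoom$. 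Given $s\in 2^{<\omega}$, pick $K>|s|$ with $j_K\geq N$ and $j_{|s|}/j_K<\epsilon$, and let $K':=\min\{\ell:\pi^{-1}[j_K]\subseteq[0,j_\ell)\}$ (well-defined since $\pi^{-1}[j_K]$ is finite). Extending $s$ by $1$'s up through position $K'$ places $x$ in $O^+_{N,\epsilon}$, since $\varphi_-(x)\supseteq[j_{|s|},j_{K'})$ forces $|\varphi_-(x)\cap\pi^{-1}[j_K]|\geq j_K-j_{|s|}$; extending by $0$'s similarly places $x$ in $O^-_{N,\epsilon}$, since $\varphi_-(x)\cap\pi^{-1}[j_K]$ has at most $j_{|s|}$ elements. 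Comeagerly then $d(\pi[\varphi_-(x)])=\osc=d(\varphi_-(x))$, so $\varphi_-(x) R_Y \pi$ fails.

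\textbf{Case 2} ($\osc\notin X$ and $\osc\notin Y$). Pick $r\in X\subseteq[0,1]$. If $r=0$, I would set $\varphi_-(x):=\{c_k:x(k)=1\}$ with $c_k:=2^{2^k}$; the sparsity of $\{c_k\}$ gives $d(\varphi_-(x))=0$ comeagerly, while a symmetric dense-open argument (extending $s$ by $0$'s past $\max\{k:\pi(c_k)<M\}$ for a suitable large $M$) yields $\underline{d}(\pi[\varphi_-(x)])=0$ comeagerly, forcing $d(\pi[\varphi_-(x)])\in\{0,\osc\}$; both make $R_Y$ fail (one matches $d(\varphi_-(x))=0$, the other is outside $Y$). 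The case $r=1$ is dual, using cosparse sets. For $r\in(0,1)$, I would adapt Case~2 of Theorem~\ref{density-unbounding}, keeping the subdivisions $J_b$, $I^a_b$, $L^{a,c}_{b,d}$ and the density-$\ell_c/2^c$ sets $E^c_v$, but replacing the $g$-dominated cycling through $v$-values within super-blocks $K_c$ by a direct $x$-selection (using $x$'s bits to pick a value $v^x(n)<w_c$ at each block). The big/small/balanced trichotomy on $\pi$ yields, for each $\pi$, a specific ``bad'' $v(c)$ per super-block $K_c$; comeager $x$ has $v^x(n)=v(c)$ at infinitely many $n\in K_c$ (the Baire-generic ``infinitely often equal'' property), and these agreements should force $d(\pi[\varphi_-(x)])\in\{r,\osc\}$, again making $R_Y$ fail.

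\textbf{Main obstacle.} The technical heart is the subcase $r\in(0,1)$. In Theorem~\ref{density-unbounding} the dominating hypothesis $g\geq^*\varphi_+(\pi)$ controls $\pi$ at \emph{every} sufficiently large scale, which is what the multi-scale big/small/balanced argument needs; here Baire-genericity only supplies control through an ``infinitely often equal'' relation. The challenge is to design the $v$-selection within each super-block $K_c$ robustly enough that isolated agreements already witness the appropriate density or oscillation at a fresh scale, rather than requiring the simultaneous control across all large scales that the dominating argument provides.
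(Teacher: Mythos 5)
Your Case 1 ($\osc\in X$) is correct but takes a more laborious route than the paper.  The paper exploits the fact that $D_{\{\osc\}}$ is comeager in $\twoom$ (and hence so is $\pi^{-1}[D_{\{\osc\}}]$), so it can simply take $\varphi_-(x)=x$ on the comeager part and set $\varphi_+(\pi)=\{x : d(x)\neq\osc\mbox{ or }d(\pi[x])\neq\osc\}$, which is automatically meager.  Your block construction $\varphi_-(x)=\bigcup\{J_n : x(n)=1\}$ and the accompanying $O^\pm_{N,\epsilon}$ density check do the job, but you are re-proving (in a specific guise) the comeagerness of the oscillating sets that the paper simply cites.  Your $r=0$ and $r=1$ subcases are also plausible, though the paper instead routes the $0\in X$ or $1\in X$ case through Theorem~\ref{density-continuum} (via the trivial $(\twoom,\M,\in)\leq_T(\cc,[\cc]^{\aleph_0},\in)$), which yields the stronger conclusion $\ddd_{X,Y}=\cc$.

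The genuine gap is the subcase $r\in(0,1)$, which is the technical heart of Theorem~\ref{density-covmeager} (the corollary is an immediate consequence of that theorem).  Your plan to transplant the big/small/balanced trichotomy from Theorem~\ref{density-unbounding} and close it off with a Baire-generic ``infinitely often equal'' claim does not obviously go through, and you correctly flag this yourself: domination controls $\pi$ at every sufficiently large scale, which is exactly what the trichotomy argument consumes (it has to read off the density at the distinguished markers $m^\pi_n$ for a tail of $n$'s), whereas Baire genericity only gives you hits at an infinite but otherwise uncontrolled set of blocks.  It is also unclear how ``$v^x(n)=v(c)$ for some $n$ in the finite block $K_c$, infinitely often in $c$'' would force the density of $\pi[\varphi_-(x)]$ to lie in $\{r,\osc\}$, since isolated agreements at stray scales need not pin down the $\liminf$ and $\limsup$ along the relevant subsequence.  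The paper's actual argument avoids the trichotomy entirely: it shows directly that the set $\{x : \overline{d}(\pi[\varphi_-(x)])\geq r\mbox{ and }\underline{d}(\pi[\varphi_-(x)])\leq r\}$ is comeager by a greedy recursive extension.  Given any finite $s$, any $b_0$ and $\epsilon>0$, one chooses $b_1\geq b_0$ with $\pi[j_{b_0}]\subseteq j_{b_1}$ and then $b_2$ with $\pi^{-1}[j_{b_1}]\subseteq j_{b_2}$, and extends $s$ to $t$ of length $i_{b_2}$ one coordinate $k=i_b+a$ at a time, always selecting the $\ell_b$-element subset $y^a_b\subseteq I^a_b$ that contains as much of $z^a_b=\pi^{-1}[j_{b_1}]\cap I^a_b$ as possible; a running quotient $c_k/d_k$ is then shown by induction to stay $\geq r-\epsilon$ (and dually $\leq r+\epsilon$ for the other extension).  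This is a per-scale argument that only needs to succeed once per $(s,b_0,\epsilon)$, which is exactly what dense-open sets deliver, rather than a tail-of-scales argument of the dominating type.  To complete your proof you would need to replace your proposed trichotomy step with a recursive construction of this kind (or something equivalent).
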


\begin{proof}[Proof of Theorem~\ref{density-covmeager}] We need to define $\varphi_- : \twoom \to D_X$ and $\varphi_+ : \Sym (\omega) \to \M$ such that
for all $x \in \twoom$ and $\pi \in \Sym (\omega)$, if $\varphi_- (x) R_Y \pi$ then $x \in \varphi_+ (\pi)$.

First consider the case $\osc \in  X$. Fix arbitrary $y \in D_{ \{ \osc \} }$. Let
\[ \varphi_- (x) = \left\{ \begin{array}{ll} x & \mbox{ if } x \in D_{ \{ \osc \} } \\ 
   y & \mbox{ if } x \notin D_{ \{ \osc \} }  \end{array} \right. \]
for $x \in \twoom$ and
\[ \varphi_+ (\pi) = \{ x \in \twoom : d(x) \neq \osc \mbox{ or } d ( \pi [x]) \neq \osc \} \]
for $\pi \in \Sym (\omega)$. Then $\varphi_+ (\pi) \in \M$   because $D_{ \{ \osc \} }$ is comeager in $\twoom$ and so is $\pi^{-1} [ D_{ \{ \osc \} } ]$.

Let $x \in \twoom$ and $\pi \in \Sym (\omega)$. Then either $d(x) \neq \osc$ and $x \in \varphi_+ (\pi)$ follows, or $d(x) = \osc$,
$\varphi_- (x) = x$, and if $x R_Y \pi$ then $d ( \pi [x] ) \neq \osc$ and $x \in \varphi_+ (\pi)$ follows again.

We now come to the case $\osc \notin Y$. If $\osc \in X$, we are done by the above argument, and if $0 \in X$ or $1 \in X$,
apply Theorem~\ref{density-continuum} below (using the trivial $(\twoom , \M, \in) \leq_T (\cc, [\cc]^{\aleph_0} , \in)$).
Hence it suffices to consider the case $(0,1) \cap X \neq \emptyset$. To illustrate the basic idea, we first deal with the special
case ${ 1 \over 2} \in X$.

\underline{Case 1. \textit{$r = {1 \over 2}$}}. (Special case) Define $\varphi_-$ such that for $x \in 2^\omega$, 
$$\varphi_- (x) (2n) = \left\{ \begin{array}{ll} 0 & \mbox{ if } x(n) = 0 \\ 1 & \mbox{ if } x(n) = 1 \end{array} \right. \;\;\;\mbox{ and } \;\;\; \varphi_- (x) (2n+1) = \left\{ \begin{array}{ll} 1 & \mbox{ if } x(n) = 0 \\ 0 & \mbox{ if } x(n) = 1 \end{array} \right.$$ 
It is clear that $d (\varphi_-(x) ) = {1\over 2}$. Next let 
$$\varphi_+ (\pi) = \left\{ x : d (\pi[\varphi_- (x)]) \mbox{ is distinct from }  {1\over 2} \mbox{ and } \osc \right\}$$ 
for $\pi \in \Sym (\omega)$. The conclusion then holds obviously, and we need to show that $\varphi_+ (\pi) \in \mathcal{M}$. 
Clearly it suffices to prove that the set $$\left\{ x : \overline{d} (\pi[\varphi_- (x)]) \geq {1 \over 2} \mbox{ and } \underline{d} (\pi [\varphi_- (x)]) \leq {1 \over 2} \right\}$$ is comeager.

Given $s \in 2^{< \omega}$ we define $\varphi_- (s)$ as above with $|\varphi_- (s)| = 2 |s|$. 
(So this means that for any $x \in 2^\omega$ with $s \subseteq x$, $\varphi_- (s) = \varphi_- (x) \restriction 2 |s|$.) 
We claim that given $s \in 2^{< \omega}$ and $n_0 \in \omega$ there are $n \geq n_0$ and $t \supseteq s$ with $\pi^{-1} [2n] \subseteq 2 |t|$ and such that 
$${| \pi[\varphi_- (t)] \cap 2 n | \over 2n} \geq {1 \over 2}$$ 
and similarly with $\geq$ replaced by $\leq$. This claim clearly finishes the proof.

To see the claim (for $\geq$), assume without loss of generality that $|s| = n_0$. Choose $n \geq n_0$ such that $\pi [2 n_0] \subseteq 2n$, 
and then choose $m \geq n$ such that $\pi^{-1} [2n] \subseteq 2m$. Extend $s$ to $t$ with $|t| = m$ as follows. 
Recursively construct $s_j$ for $j$ with $n_0 \leq j \leq m$ such that, letting 
$$c_j = n_0 + | \{ 2n_0 \leq i < 2j : \pi (i) \in 2n \mbox{ and } \varphi_- (s_j) (i) = 1 \} | $$ 
and 
$$ d_j = 2 n_0 + | \{ 2n_0 \leq i < 2j : \pi (i) \in 2n  \} |$$ 
we have
    \begin{itemize}
     \item $|s_j| = j$,
        \item $s = s_{n_0} \subset s_{n_0 + 1} \subset ... \subset s_j \subset ... \subset s_m = t$,
        \item ${c_j \over d_j} \geq {1 \over 2}$.
    \end{itemize}
Suppose $s_j$ has been constructed. If either (case 1) both $\pi (2j)$ and $\pi(2j + 1)$ belong to $2n$ or (case 2) both do not belong to $2n$, 
we extend $s_j$ arbitrarily to $s_{j+1}$ (i.e. it does not matter whether $s_{j+1} (j)$ is $0$ or $1$). 
If (case 3) $\pi (2j) \in 2n$ and $\pi (2j + 1) \notin 2n$, we let $s_{j+1} (j) =1$, and if (case 4) $\pi (2j) \notin 2n$ and $\pi (2j + 1) \in 2n$, we let $s_{j+1} (j) =0$.

We need to check the last item, that is, ${c_{j+1} \over d_{j+1}} \geq {1 \over 2}$. By induction hypothesis, we know ${c_j \over d_j} \geq {1\over 2}$. 
Let $c_j = {d_j \over 2} + k_j$ with $k_j \geq 0$. In case 1 we have $c_{j+1} = c_j + 1$ and $d_{j+1} = d_j + 2$. Thus 
$${c_{j+1} \over d_{j+1}} = { {d_j + 2 \over 2 } + k_j \over d_j + 2} = {1 \over 2} + {k_j \over d_j + 2} \geq {1\over 2} $$ 
as required. In case 2, $c_{j+1} = c_j$ and $d_{j+1} = d_j$ and the conclusion is trivial. In cases 3 and 4, $c_{j+1} = c_j + 1$ and $d_{j+1} = d_j + 1$ and we get 
$${c_{j+1} \over d_{j+1}} = { {d_j + 1 \over 2 } + k_j + {1 \over 2} \over d_j + 1} = {1 \over 2} + {k_j + {1 \over 2} \over d_j + 1} \geq {1\over 2}. $$ 
This completes the construction.

Now simply note that for $j = m$, the assumptions $\pi [2 n_0] \subseteq 2n$ and $\pi^{-1} [2n] \subseteq 2m$ imply that $d_m = 2n$ and $c_m = |\pi [\varphi_- (t)] \cap 2n|$. Hence we obtain 
$${| \pi[\varphi_- (t)] \cap 2 n | \over 2n} \geq {1 \over 2}$$ 
as required.

\noindent \underline{Case 2. Arbitrary \textit{$r \in (0,1)$}}. (General case) 
As in the proof of Theorem~\ref{density-unbounding}, let $(\ell_b : b \in \omega)$ be a sequence such that ${\ell_b \over 2^b}$ converges to $r$ and $| r - {\ell_b \over 2^b}| <
{1 \over 2^b}$. Let $(i_b : b \in \omega)$ be such that $i_0 = 0$ and $i_{b+1} = i_b + 2^b$, and let $(j_b : b \in \omega)$ such that
$j_0 = 0$ and $j_{b+1} = j_b + 2^{2b}$. Partition the interval $J_b = [j_b, j_{b+1})$ into $2^b$ many intervals $I_b^a$, $a \in 2^b$, of length $2^b$.
Again let $w_b := \left( \begin{array}{c} 2^b \\ \ell_b \end{array}  \right)$, the number of $\ell_b$-element subsets of $2^b$, and
let $e_b : w_b \to \P (2^b)$ be a bijection from $w_b$ to the $\ell_b$-element subsets of $2^b$. Let $f \in\omom$ be the function with $f(n) = w_b$ whenever 
$i_b \leq n < i_{b+1}$. Work with the space of functions below $f$, $\prod f := \prod_{n\in\omega} f(n)$, instead of $\twoom$ (this is possible because
$(\twoom, \M ,\in) \equiv_T (\prod f, \M ,\in )$). For $x \in \prod f$ define $\varphi_- (x) \in \omoms$ such that for all $b \in \omega$ and all $a \in 2^b$,
\[ \varphi_- (x) \cap I^a_b = g^a_b [ e_b (x (i_b + a))] \]
where $g^a_b$ is the canonical bijection between $2^b$ and $I^a_b$ sending $k \in 2^b$ to the $k$-th element of $I^a_b$. (Note here that
$x (i_b + a) \in f (i_b + a) = w_b$.) It is clear that $d (\varphi_- (x)) = r$. Let
\[ \varphi_+ (\pi) = \left \{ x: d ( \pi [ \varphi_- (x)]) \mbox{ is distinct from } r \mbox{ and } \osc \right\} \]
for $\pi \in \Sym (\omega)$. Again, we only need to show that $\varphi_+ (\pi) \in \M$ and, again, this is done by proving that the set
\[ \left\{ x : \overline{d} ( \pi [ \varphi_- (x) ]) \geq r \mbox{ and } \underline{d} ( \pi [ \varphi_- (x) ]) \leq r \right\} \]
is comeager.

Given $s \in \prod^\leq f := \bigcup_{n \in\omega} \prod_{k < n} f(k)$ define $\varphi_- (s)$ as above. 
We claim that given $s \in \prod^\leq f$, $b_0 \in \omega$ and $\epsilon > 0$ there are $b_2 \geq b_1 \geq b_0$ and $t \supseteq s$ with
$t \in \prod^\leq f$ and $ | t | = i_{b_2}$ such that $\pi^{-1} [ j_{b_1} ] \sub j_{b_2}$ and
\[ { \left| \pi [ \varphi_- (t) ] \cap j_{b_1} \right| \over j_{b_1} } \geq r - \epsilon \]
and similarly with $\geq r - \epsilon$ replaced by $\leq r + \epsilon$. This will finish the proof.

To see the claim (for $\geq r - \epsilon$), assume without loss of generality that $|s| = i_{b_0}$. Also assume $b_0$ is so large that
\[ { |  \varphi_- (s) \cap j_{b_0} | \over j_{b_0} } \geq r - \epsilon \mbox{ and } {\ell_b \over 2^b } \geq r - \epsilon \mbox{ for all } b \geq b_0 \]
Choose $b_1 \geq b_0$ such that $\pi [ j_{b_0} ] \sub j_{b_1}$, and then choose $b_2 \geq b_1$ such that $\pi^{-1} [ j_{b_1} ] \sub j_{b_2}$.
Extend $s$ to $t$ with $|t| = i_{b_2}$ as follows. Recursively construct $s_k$ for $k$ with $i_{b_0} \leq k \leq i_{b_2}$ such that if
$k = i_b + a$ for some $b$ with $b_0 \leq b \leq b_2$ and $a \in 2^b$, letting
\[ c_k = | \varphi_- (s) \cap j_{b_0} | + | \{ j_{b_0} \leq \ell < j_b + a \cdot 2^b : \pi (\ell) \in j_{b_1} \mbox{ and } \ell \in \varphi_- (s_k) \} | \]
and
\[ d_k = j_{b_0} + | \{ j_{b_0} \leq \ell < j_b + a \cdot 2^b : \pi (\ell) \in j_{b_1} \} | \]
we have
\begin{itemize}
\item $| s_k | = k$,
\item $s = s_{i_{b_0}} \subset s_{i_{b_0} + 1} \subset ... \subset s_k \subset ... \subset s_{i_{b_2}} = t$,
\item ${ c_k \over d_k } \geq r - \epsilon$.
\end{itemize}
Note that the last item holds for $k = i_{b_0}$ by assumption. Suppose $s_k$ has been constructed for some $k = i_b + a$ with $b_0 \leq b < b_2$ and $a \in 2^b$.
We need to define $s_{k+1} (k) = s_{k+1} (i_b + a) \in f (i_b + a) = w_b$. Consider the set $z^a_b = \pi^{-1} [ j_{b_1} ] \cap I^a_b$. If this set has at least
$\ell_b$ elements choose any $\ell_b$-element subset $y^a_b$ of $z^a_b$. Otherwise let $y^a_b$ any $\ell_b$-element subset of $I^a_b$ containing $z^a_b$.
Let \[  s_{k+1} (i_b + a) := e_b^{-1} \left( (g^a_b)^{-1} [y^a_b] \right) \in w_b \]
So $\varphi_- (s_{k+1}) \cap I^a_b = y^a_b$. 

We need to check the last item, that is, ${c_{k+1} \over d_{k+1} } \geq r - \epsilon$. Note that, in terms of ${c_k \over d_k}$,  the smallest possible values for this quotient
are obtained \underline{either} if $z^a_b  = I^a_b$ in which case $c_{k+1} = c_k + \ell_b$ and $d_{k+1} = d_k + 2^b$ \underline{or} if $z^a_b = \emptyset$ in which
case $c_{k+1} = c_k$ and $d_{k+1} = d_k$. Therefore we obtain
\[ {c_{k+1} \over d_{k+1} } \geq \min \left\{   {c_k \over d_k} , { c_k + \ell_b \over  d_k + 2^b } \right\} \]
Since the latter value is between ${c_k \over d_k}$ and ${\ell_b \over 2^b}$, and both of these are $\geq r - \epsilon$, the first by inductive assumption and
the second by choice of $b_0$, we see that ${c_{k+1} \over d_{k+1} } \geq r - \epsilon$, as required. This completes the construction.

Now simply note that for $k = i_{b_2}$, the assumptions $\pi [ j_{b_0} ] \sub j_{b_1}$ and $\pi^{-1} [j_{b_1} ] \sub j_{b_2}$ imply
that $d_{i_{b_2}} = j_{b_1}$ and $c_{i_{b_2}} = | \pi [ \varphi_- (t) ] \cap j_{b_1} |$. Therefore we obtain
\[ { \left| \pi [ \varphi_- (t) ] \cap j_{b_1} \right| \over j_{b_1} } \geq r - \epsilon \]
as required. This finishes the proof of the theorem.
\end{proof}

The simple relational system $(\cc, [\cc]^{\aleph_0} , \in )$ satisfies $\dd (\cc, [\cc]^{\aleph_0} , \in) = \cc$ and ${\bb (\cc, [\cc]^{\aleph_0} , \in )} = \aleph_1$.

\begin{theorem} \label{density-continuum}
If $0 \in X$ or $1 \in X$ and $\osc \notin Y$, or if $\osc \in X$ and $0 \notin Y$ or $1 \notin Y$, then $( \cc, [\cc]^{\aleph_0} , \in ) \leq_T (D_X, \Sym (\omega) , R_Y)$.
\end{theorem}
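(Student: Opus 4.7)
The aim is to exhibit Tukey maps $\varphi_-:\cc\to D_X$ and $\varphi_+:\Sym(\omega)\to[\cc]^{\aleph_0}$ witnessing the reduction. By complementing $A$ in $\omega$ (which swaps density $r$ with $1-r$ and preserves $\osc$), it suffices to treat two cases: (a) $0\in X$ and $\osc\notin Y$; (b) $\osc\in X$ and $0\notin Y$.

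In case (a), I would fix an interval partition $\omega=\bigsqcup_n I_n$ with $|I_n|$ growing fast enough (say $|I_n|=2^n$), identify $\cc$ with the choice space $\prod_n I_n$, and set $\varphi_-(x)=A_x:=\{x(n):n\in\omega\}$ for $x\in\prod_n I_n$. Since $A_x\cap\bigcup_{k\leq n}I_k$ has at most $n+1$ elements while $|\bigcup_{k\leq n}I_k|=2^{n+1}-1$, we have $d(A_x)=0$, so $A_x\in D_X$. For $\pi\in\Sym(\omega)$, define
\[ \varphi_+(\pi):=\{x\in\textstyle\prod_n I_n:d(\pi[A_x])\in Y\setminus\{0\}\}, \]
so that $\varphi_-(x) R_Y \pi$ immediately forces $x\in\varphi_+(\pi)$. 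The content of the proof is then to show $|\varphi_+(\pi)|\leq\aleph_0$ for every $\pi$.

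To this end, fix $\pi$ and suppose $d(\pi[A_x])=r>0$. Writing $T_n(N):=I_n\cap\pi^{-1}[0,N)$, one has $|\pi[A_x]\cap N|=|\{n\in\omega:x(n)\in T_n(N)\}|$, and the convergence $|\pi[A_x]\cap N|/N\to r$ as $N\to\infty$ becomes a stringent asymptotic density constraint on the branch $x$ through the tree $\prod_n I_n$. A combinatorial argument, exploiting the fast growth of $|I_n|$ together with the identity $\sum_n|T_n(N)|=N$, would be used to show that the branches of $\prod_n I_n$ satisfying this convergence for any $r\in Y\setminus\{0\}$ form a set of cardinality at most $\aleph_0$; this is the key technical step.

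In case (b), I would take $A_x$ to alternate between very sparse and very dense blocks whose sizes are prescribed by $x$, producing $\underline d(A_x)=0$ and $\overline d(A_x)=1$, hence $A_x\in D_{\{\osc\}}\subseteq D_X$. The requirement that $d(\pi[A_x])\in Y$ and $d(\pi[A_x])\neq\osc$ then forces $\pi$ to ``rectify'' the oscillation of $A_x$ into a defined positive density, a strong constraint that, by a parallel analysis, pins $x$ down to at most countably many values. The main obstacle in both cases is precisely this countability of $\varphi_+(\pi)$: the naive observation that $\{x:d(\pi[A_x])=r\}$ is meager or measure zero in $\prod_n I_n$ is not enough, and one needs a finer tree-branching argument that uses the rigidity of the partition against the asymptotic density requirement to eliminate all but countably many branches.
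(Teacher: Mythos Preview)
Your proposal has a genuine, fatal gap: the countability of $\varphi_+(\pi)$ that you identify as ``the key technical step'' is in fact \emph{false} for the family $\{A_x:x\in\prod_n I_n\}$. Here is a counterexample in case~(a). Write $e_n=\min I_n$, let $\pi(e_n)=2n$, and let $\pi$ map $\omega\setminus\{e_n:n\in\omega\}$ order-preservingly onto the odd numbers. Fix an infinite co-infinite $S\subseteq\omega$ with $d(S)=1$, say $S=\omega\setminus\{2^k:k\in\omega\}$. For any $x$ with $x(n)=e_n$ for $n\in S$ and $x(n)\in I_n\setminus\{e_n\}$ for $n\notin S$, one checks directly that $|\pi[A_x]\cap N|=|S\cap\lceil N/2\rceil|+O(\log\log N)=N/2+o(N)$, so $d(\pi[A_x])=\tfrac12$. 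Since there are $\prod_{n\notin S}(|I_n|-1)=\cc$ such $x$, the set $\varphi_+(\pi)$ is uncountable whenever $\tfrac12\in Y$ (and the same construction with $\{2n:n\in\omega\}$ replaced by any fixed positive-density set handles other $Y$). The underlying problem is that your sets $A_x$ are \emph{not} pairwise almost disjoint---any $x,x'$ agreeing on infinitely many coordinates give $|A_x\cap A_{x'}|=\aleph_0$---so there is no obstruction to continuum many of them simultaneously acquiring the same positive density under a single permutation, and no tree-branching argument will produce one.

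The paper's proof sidesteps this entirely by taking $\varphi_-$ to enumerate an \emph{almost disjoint} family $\{A_\alpha:\alpha<\cc\}$ of subsets of a fixed density-$0$ set (for case~(a)), respectively an almost disjoint family of sets without density (for case~(b)). Then $\varphi_+(\pi):=\{\alpha:d(\pi[A_\alpha])\text{ exists and is }>0\}$ is countable by the elementary observation that in any pairwise almost disjoint family at most $k$ members can have density exceeding $1/k$. Your construction is easily repaired along the same lines: restrict $\varphi_-$ to a size-$\cc$ subfamily of $\prod_n I_n$ whose members are pairwise eventually different (e.g.\ $x_z(n)=$ the code of $z\upharpoonright n$ in $I_n$ for $z\in 2^\omega$), so that the corresponding $A_{x_z}$ are pairwise almost disjoint; then the one-line countability argument applies and the elaborate density analysis you sketch becomes unnecessary.
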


The assumptions in this theorem are mostly necessary: by Theorems~\ref{density-nonmeager2} (1) and~\ref{density-reaping}, if $\osc \in Y$ and $\osc \notin X$, then $\ddd_{X,Y} \leq \non (\M)$,
if $\osc \in Y$ and $0 \in Y$ and $1 \in Y$, then $\ddd_{X,Y} \leq \max \{ \rr, \non (\M) \}$, and if $0\notin X$ and $1\notin X$ and $0 \in Y$ and $1\in Y$, then $\ddd_{X,Y} \leq \rr$.
However, we do not know whether we can have $\ddd_{X,Y} = \cc$ in case $0 \notin X$ and $1\notin X$ and $\osc \notin X$, i.e., in case $X \sub (0,1)$, though
$\ddd_{X,Y} \leq \rr$ will follow in some cases from Theorem~\ref{density-reaping}.

\begin{corollary}   \label{density-continuum-cor}
Under the assumptions of Theorem \ref{density-continuum}, $\ddd_{X,Y} = \cc$ and $\ddd_{X,Y}^\perp \leq \aleph_1$.
\end{corollary}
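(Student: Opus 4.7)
The plan is to exhibit in each case an almost disjoint (AD) family $\{A_\alpha : \alpha < \cc\}$ in $D_X$ and set $\varphi_-(\alpha) = A_\alpha$, $\varphi_+(\pi) = \{\alpha < \cc : A_\alpha R_Y \pi\}$ (padded by a fixed countable subset of $\cc$ if necessary). The condition $\varphi_-(\alpha) R_Y \pi \Rightarrow \alpha \in \varphi_+(\pi)$ is tautological, so the content reduces to showing $|\varphi_+(\pi)| \leq \aleph_0$ for every $\pi \in \Sym(\omega)$.

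The engine is the super-additivity of lower density under almost disjoint union: for pairwise almost disjoint $B_1, \ldots, B_k \sub \omega$, $\underline{d}(B_1 \cup \cdots \cup B_k) \geq \sum_{i \leq k} \underline{d}(B_i)$, immediate from $\liminf(a_n + b_n) \geq \liminf a_n + \liminf b_n$ after stripping the finite overlaps. Thus in any AD family, at most $\lfloor 1/\epsilon \rfloor$ members can have lower density $\geq \epsilon$, so at most countably many have positive lower density. Applied to $\{\pi[A_\alpha] : \alpha < \cc\}$ (still AD since $\pi$ is a bijection), this gives: only countably many $\alpha$ satisfy $\underline{d}(\pi[A_\alpha]) > 0$. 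We now choose $\{A_\alpha\}$ so that $A_\alpha R_Y \pi$ forces $\underline{d}(\pi[A_\alpha]) > 0$. If $0 \in X$ and $\osc \notin Y$, take $\{A_\alpha\}$ AD of density $0$: then $A_\alpha R_Y \pi$ forces $d(\pi[A_\alpha])$ to exist (as $\osc \notin Y$) and be nonzero, hence positive. If $\osc \in X$ and $0 \notin Y$, take $\{A_\alpha\}$ AD of oscillating density: then $A_\alpha R_Y \pi$ forces $d(\pi[A_\alpha]) \in Y \sem \{\osc\} \sub (0,1]$, again positive. The cases $1 \in X$, $\osc \notin Y$ and $\osc \in X$, $1 \notin Y$ follow symmetrically by taking $A_\alpha = \omega \sem A'_\alpha$ with $\{A'_\alpha\}$ as in the corresponding prior case; then $\{A'_\alpha\} = \{\omega \sem A_\alpha\}$ is AD, and $A_\alpha R_Y \pi$ forces $d(\pi[A'_\alpha]) = 1 - d(\pi[A_\alpha])$ to exist and be positive, so the argument applies to $\{\pi[A'_\alpha]\}$.

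The only nontrivial construction is an AD family of $\cc$ sets with $\underline{d} = 0$ and $\overline{d} = 1$. Fix an interval partition $\omega = \bigsqcup_n [a_n, a_{n+1})$ with $a_n/a_{n+1} \to 0$, and fix the standard AD family $\{E_\alpha : \alpha < \cc\}$ of branches through the binary tree (coded so that each $E_\alpha$ meets every interval $[2^n - 1, 2^{n+1} - 1)$ in one point, whence $E_\alpha$ is infinite and has unbounded gaps). Put $A'_\alpha = \bigcup_{n \in E_\alpha} [a_n, a_{n+1})$; this family is AD since the blocks are finite. For $n \in E_\alpha$ the bound $d_{a_{n+1}}(A'_\alpha) \geq (a_{n+1} - a_n)/a_{n+1} \to 1$ forces $\overline{d}(A'_\alpha) = 1$, while along a gap $E_\alpha \cap [m_1, m_2] = \emptyset$ one has $d_{a_{m_2}}(A'_\alpha) \leq a_{m_1}/a_{m_2} \to 0$ as $m_2 - m_1 \to \infty$, yielding $\underline{d}(A'_\alpha) = 0$. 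AD families of density-$0$ sets are standard (a sparser variant of the same branch construction suffices). I foresee no serious obstacle.
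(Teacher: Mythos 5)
Your proof is correct and follows essentially the same route as the paper: exhibit an almost disjoint family in $D_X$ as the $\varphi_-$-image of $\cc$, and observe that for any fixed $\pi$ only countably many images $\pi[A_\alpha]$ (which remain almost disjoint) can have positive lower density, so that $\varphi_+(\pi)$ is countable. The only differences are cosmetic: you make explicit the superadditivity estimate that the paper leaves tacit behind ``can be at most countable,'' and you build an AD family of oscillating sets from scratch rather than leaving it as folklore.
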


Note that in general we can only prove $\ddd_{X,Y}^\perp \leq \aleph_1$ even though in natural cases $\ddd_{X,Y}^\perp = \aleph_1$ will hold.
By definition $\ddd_{X,Y}^\perp \geq 2$, and if for example $X = \{ 0 \}$ and $Y = ( {1 \over 2} , 1]$ then $\ddd_{X,Y}^\perp = 2$, as witnessed by two
disjoint sets of density $0$. Similarly, it is easy to see that if $X = \{ 0 \}$ and $Y = ( {1 \over 3} , 1]$, then $\ddd_{X,Y}^\perp = 3$, etc. So all finite numbers $\geq 2$
can be realized as $\ddd_{X,Y}^\perp$. We do not know whether $\ddd_{X,Y}^\perp = \aleph_0$ for some choices of $X$ and $Y$.

\begin{proof}[Proof of Theorem~\ref{density-continuum}] 
We need to define $\varphi_- : \cc \to D_X$ and $\varphi_+ : \Sym (\omega) \to [ \cc]^{\aleph_0}$ such that for all $\alpha \in \cc$ and $\pi \in
\Sym (\omega)$, if $\varphi_- (\alpha) R_Y \pi$ then $\alpha \in \varphi_+ (\pi)$.

First consider the first case. Assume without loss of generality that $0 \in X$. (The case $1 \in X$ is analogous: work with complements!)
Let $A$ be such that $d(A)=0$. Let $\{A_\alpha  :  \alpha < \cc \}$ be an almost disjoint  family of subsets of $A$. So $d(A_\alpha) = 0$ for all $\alpha  < \cc$.
Let $\varphi_- (\alpha) = A_\alpha$. Fix  a permutation $\pi$. Note that $\varphi_+ (\pi) := \{ \alpha : d (\pi [\varphi_- (\alpha)]) $ is defined and $> 0 \}$ 
can be at most countable. Since $\osc \notin Y$, $\varphi_- (\alpha) R_Y \pi$ implies $\alpha \in \varphi_+ (\pi)$.

Next consider the second case. Again assume without loss of generality that $0 \notin Y$. Let $\{ \varphi_- (\alpha) : \alpha < \cc \}$ be an almost 
disjoint family of subsets of $\omega$ without density (i.e., $d (\varphi_- (\alpha)) = \osc$). Again, $\varphi_+ (\pi) := \{ \alpha : d (\pi [\varphi_- (\alpha)]) $ is defined and $> 0 \}$ 
is at most countable. Since $0 \notin Y$, $\varphi_- (\alpha) R_Y \pi$ again implies $\alpha \in \varphi_+ (\pi)$, and the theorem is proved.
\end{proof}

Let $U\!R$ be the unreaping (or, unsplitting) relation on $[\omega]^\omega$, that is, $A\; U\!R \; B$ if $B \subseteq^* A$ or $B \cap A$ is finite.
This means that $A$ does not split $B$. $\dd ( [\omega]^\omega , [\omega]^\omega , U\!R)$ is the reaping number $\rr$ and $\bb ( [\omega]^\omega , [\omega]^\omega , U\!R)$
is the splitting number $\sss$.

\begin{theorem}   \label{density-reaping}
Assume $0\notin X$ and $1 \notin X$. Also assume
\begin{enumerate}
\item \underline{either} $0\in Y$ and $1\in Y$,
\item \underline{or} $\osc \notin X$ and there is a number $r \in ( 0,1]$ such that for all $x \in X$, both $r + x (1-r)$ and $x (1-r)$ belong to $Y$.
\end{enumerate}
Then $(D_X,  \Sym (\omega) , R_Y ) \leq_T ( [\omega]^\omega , [\omega]^\omega , U\!R)$.
\end{theorem}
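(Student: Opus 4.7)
The plan is to construct Tukey maps $\varphi_- : D_X \to [\omega]^\omega$ and $\varphi_+ : [\omega]^\omega \to \Sym(\omega)$ witnessing the reduction, i.e.\ such that $\varphi_-(A) \;U\!R\; C$ implies $A \, R_Y \, \varphi_+(C)$. I set $\varphi_-(A) := A$ for every $A \in D_X$. Given $C \in [\omega]^\omega$ with increasing enumeration $\{c_0 < c_1 < \cdots\}$, I pass to the density-zero infinite subset $C_0 := \{c_{2^k} : k \in \omega\}$. Fix once and for all a set $S \subseteq \omega$ with $d(S) = r$ and $\omega \setminus S$ infinite: in case (1) I take $r := 1$, $F := \{2^k : k \in \omega\}$, and $S := \omega \setminus F$; in case (2) I take the given $r$ and any density-$r$ set $S$ with infinite complement. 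Then $\pi_C := \varphi_+(C)$ is defined to be the permutation of $\omega$ obtained by concatenating the order-preserving bijections $C_0 \to S$ and $\omega \setminus C_0 \to \omega \setminus S$.

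The verification will rest on a simple density-transfer observation: if $\phi : P \to Q$ is an order-preserving bijection between $P, Q \subseteq \omega$ with $d(P) = p > 0$ and $d(Q) = q$, then any $B \subseteq P$ with $d(B) = \beta$ satisfies $d(\phi[B]) = \beta q/p$. This is immediate from $p_n \sim n/p$ and $q_n \sim n/q$ in the respective enumerations. Applied to $\pi_C \restriction (\omega \setminus C_0)$, where $p = 1$ and $q = 1-r$, the transfer multiplies densities by the factor $1-r$.

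Now suppose $\varphi_-(A) \;U\!R\; C$; since $C_0 \subseteq C$, either $C_0 \subseteq^* A$ or $C_0 \cap A$ is finite. Suppose first that $x := d(A)$ exists (which is automatic in case (2) because $\osc \notin X$). In the first subcase, $\pi_C[A \cap C_0] = S$ modulo finite, and $d(C_0) = 0$ implies $A \setminus C_0$ still has density $x$ in $\omega$, hence relative density $x$ in $\omega \setminus C_0$, so the transfer gives $d(\pi_C[A \setminus C_0]) = x(1-r)$; adding, $d(\pi_C[A]) = r + x(1-r)$. In the second subcase, $\pi_C[A]$ sits in $\omega \setminus S$ modulo finite and the same computation yields $d(\pi_C[A]) = x(1-r)$. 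Both values lie in $Y$ by hypothesis and differ from $x$ because $r > 0$ and $x \in (0,1)$; in case (1) they specialize to $1$ and $0$ respectively.

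The main subtlety I foresee is case (1) when $d(A) = \osc$: the formulas $r + x(1-r)$ and $x(1-r)$ are then meaningless, so I would argue directly that $\pi_C[A] \supseteq S$ modulo finite forces $d(\pi_C[A]) \geq d(S) = 1$, while $\pi_C[A] \subseteq \omega \setminus S = F$ modulo finite forces $d(\pi_C[A]) \leq d(F) = 0$, giving the desired value $1$ or $0$ independently of whether $d(A)$ exists. The remaining work is the careful verification of the transfer principle (including the degenerate case $q = 0$) and of the additivity of densities on the two disjoint pieces of $\pi_C[A]$.
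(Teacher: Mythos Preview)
Your proposal is correct and follows essentially the same approach as the paper. The paper also sets $\varphi_-(A)=A$; for case~(2) it passes to a density-zero subset $y'\subseteq y$ and lets $\varphi_+(y)$ be the concatenation of the order-preserving bijections $y'\to z$ and $\omega\setminus y'\to\omega\setminus z$ for a fixed set $z$ of density $r$, obtaining the same values $r+d(x)(1-r)$ and $d(x)(1-r)$ via the same relative-density computation. The only difference is organizational: the paper handles case~(1) separately and more directly (choosing any $\varphi_+(y)$ with $d(\varphi_+(y)[y])=1$, so that $y\subseteq^* x$ forces density $1$ and $y\cap x$ finite forces density $0$), whereas you subsume case~(1) into the case~(2) construction by taking $r=1$ and then argue the $\osc$ subcase by the containment $\pi_C[A]\supseteq^* S$ or $\pi_C[A]\subseteq^* F$. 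Both routes work; yours is a clean unification, while the paper's split makes case~(1) marginally shorter.
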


Note that for $r = 1$, (2) is a special case of (1). By Theorem~\ref{density-nonmeager2} (2)  and the consistency of $\rr < \non (\M)$, the assumption $0,1 \notin X$
is necessary. If $\osc \in X$, then $0,1 \in Y$ is also necessary by Theorem~\ref{density-continuum}. However, if $\osc \notin X$, condition (2) can be relaxed:
e.g., we know that $\ddd_{(0,1) , \{  \osc\} } \leq \rr$, see Proposition~\ref{osc-all} and Corollary~\ref{dd-summary}.

\begin{corollary}   \label{density-reaping-cor}
Under the assumptions of Theorem \ref{density-reaping}, $\mathfrak{dd}_{X,Y} \leq \mathfrak{r}$ and $\mathfrak{s} \leq \mathfrak{dd}_{X,Y}^\perp$.
\end{corollary}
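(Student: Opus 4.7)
The plan is to take $\varphi_-(A) := A$ throughout, so that the hypothesis $\varphi_-(A) \; U\!R \; B$ becomes the dichotomy: either $B \sub^* A$ (Case (a)) or $B \cap A$ is finite (Case (b)). All the work will go into defining $\varphi_+$, separately in the two cases of the theorem.

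For Case (1), where $0, 1 \in Y$, I will apply Proposition~\ref{density-permutation}(1) to each infinite-coinfinite $B$ to pick $\varphi_+(B) = \pi_B \in \Sym (\omega)$ with $d(\pi_B[B]) = 1$; automatically $d(\pi_B[B^c]) = 0$. In Case (a), $\pi_B[A] \supseteq^* \pi_B[B]$ yields $d(\pi_B[A]) = 1 \in Y \sem X$ (since $1 \notin X$); in Case (b), $A \sub^* B^c$ gives $\pi_B[A] \sub^* \pi_B[B^c]$, so $d(\pi_B[A]) = 0 \in Y \sem X$. This handles Case (1) uniformly, including the subcase $\osc \in X$.

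For Case (2), the subtle point is that for a general infinite $B$ the density $d(B)$ need not exist or be zero, and a direct construction of $\pi_B$ would give $d(\pi_B[A])$ corrupted by a $d(B)$-dependent correction term (possibly $\osc$) that need not lie in $Y$. I plan to sidestep this by first replacing $B$ by a canonical density-zero subset. Enumerating $B = \{b_0 < b_1 < \cdots\}$, set $B' := \{b_{2^k} : k \in \omega\}$, so that $|B' \cap n| \leq \log_2 n + 1$ and $d(B') = 0$; since $B' \sub B$, the reaping hypothesis descends: $B \sub^* A$ forces $B' \sub^* A$, and $B \cap A$ finite forces $B' \cap A$ finite. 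Next fix a set $M \sub \omega$ with $d(M) = r$ once and for all, and let $\varphi_+(B) = \pi_B$ be the unique permutation mapping $B'$ onto $M$ and $(B')^c$ onto $\omega \sem M$, each in increasing order. A standard enumeration argument shows that, since $d((B')^c) = 1$, the order-preserving bijection $(B')^c \to \omega \sem M$ sends any $Z \sub (B')^c$ with $d(Z) = \gamma$ to a set of density $\gamma(1-r)$.

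In Case (a), $B' \sub^* A$ with $d(A) = x$ gives $d(A \cap (B')^c) = x$ (because $d(B') = 0$), so the $(B')^c$-part of $\pi_B[A]$ has density $x(1-r)$; together with $\pi_B[A \cap B'] = M$ up to finitely many points (density $r$), we obtain $d(\pi_B[A]) = r + x(1-r) \in Y$, which differs from $x$ because $r(1-x) > 0$. In Case (b), $A \sub^* (B')^c$ has relative density $x$ there, whence $d(\pi_B[A]) = x(1-r) \in Y$, again $\neq x$ since $r > 0$. Both target values belong to $Y$ by hypothesis. The main obstacle -- ensuring that the density computation is insensitive to the possibly ill-behaved density of $B$ -- is resolved precisely by this replacement $B \rightsquigarrow B'$.
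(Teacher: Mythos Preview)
Your proposal is correct and follows essentially the same approach as the paper's proof of Theorem~\ref{density-reaping} (from which the corollary is immediate via the Tukey framework): $\varphi_-$ is the identity, and in Case~(1) $\varphi_+(B)$ sends $B$ to a set of density~$1$, while in Case~(2) one passes to a density-zero subset $B' \sub B$ and lets $\varphi_+(B)$ map $B'$ order-preservingly onto a fixed set of density~$r$ and $(B')^c$ onto its complement. The only difference is cosmetic: the paper says ``let $y' \sub y$ be a subset of density $0$'' without specifying it, whereas you give the explicit choice $B' = \{b_{2^k} : k \in \omega\}$ and spell out the density computations in more detail.
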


\begin{proof}[Proof of Theorem~\ref{density-reaping}]
We need to define $\varphi_-: D_X \to [\omega]^\omega$ and $\varphi_+ : [\omega]^\omega \to \Sym (\omega)$ such that 
for all $x \in D_X$ and $y \in [\omega]^\omega$, if $y \subseteq^* \varphi_- (x)$ or $y \cap \varphi_- (x)$ is finite, then $x R_Y \varphi_+ (y)$.

Assume first $0,1 \in Y$. 
Let $\varphi_- (x) = x$ for $x \in D_X$ and let $\varphi_+ (y)$ be such that $d (\varphi_+ (y) [y] ) = 1$ for $y \in [\omega]^\omega$. 
If $y \subseteq^* x$ then $d (\varphi_+ (y) [x] ) = 1$ and if $y \cap x$ is finite then $d (\varphi_+ (y) [x] ) = 0$. 
In either case $d(\varphi_+ (y) [x]) \neq d(x)$, and $x R_Y \varphi_+ (y)$ holds. 

Next assume $X \sub (0,1)$, and let $r$ be as required in (2). Again let $\varphi_- (x) = x$ for $x \in D_X$. Fix a set $z$ of density $r$.
For $y \in\omoms$, let $y' \sub y$ be a subset of density $0$ and let $\varphi_+ (y)$ be the unique permutation of $\omega$ that maps $y'$ to $z$ and
$\omega \sem y'$ to $\omega \sem z$, both in an order-preserving fashion. Note that $d (\omega \sem z) = 1-r$.

Now assume $y \sub^* x$ with $d(x) \in X$. Since $d (y') = 0$, the relative density of $x \sem y'$ in $\omega \sem y'$ is equal to $d(x)$ and, by definition of
$\varphi_+ (y)$, the relative density of $\varphi_+ (y) [x \sem y']$ in $\varphi_+ (y) [ \omega \sem y'] = \omega \sem z$ is $d(x)$ as well. Thus, as
$\varphi_+ (y) [y'] \sub^* \varphi_+ (y) [x]$, the density of $\varphi_+ (y) [x]$ is $r + d(x) (1-r) > d(x)$, which belongs to $Y$ by assumption. If, on the other
hand, $y \cap x$ is finite, the same reasoning shows the density of $\varphi_+ (y) [x]$ is $d(x) (1-r) < d(x)$. In either case, $x R_Y \varphi_+ (y)$ holds,
and the theorem is proved.
\end{proof}

As for the rearrangement numbers in~\cite[Theorem 5]{BBBHHL20}, for fixed $X \sub [0,1]$, the value of $\ddd_{X,Y}$ does not change as long as 
$\{ \osc \} \sub Y \sub \all$. This is based on the following lemma:

\begin{lemma}[{\cite[Lemma 4]{BBBHHL20}}]   \label{osc-lemma}
For any permutation $\pi \in \Sym (\omega)$ there exists a permutation $\sigma_\pi \in \Sym (\omega)$ such that
$\sigma_\pi [n] = n$ for infinitely many $n$ and $\sigma_\pi [n] = \pi [n]$ for infinitely many $n$.
\end{lemma}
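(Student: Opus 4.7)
The plan is a straightforward back-and-forth construction. I will inductively build a strictly increasing sequence $0 = n_0 < n_1 < n_2 < \cdots$ together with compatible bijections on the initial segments $n_k \subseteq \omega$, alternating between stages that achieve $\sigma_\pi[n_{2k}] = n_{2k}$ (the ``identity'' type) and stages that achieve $\sigma_\pi[n_{2k+1}] = \pi[n_{2k+1}]$ (the ``$\pi$'' type). The union of these partial maps will be the required permutation $\sigma_\pi \in \Sym(\omega)$.

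First, set $n_0 = 0$ and start with $\sigma_\pi$ empty, so vacuously $\sigma_\pi[n_0] = n_0$. Suppose at stage $2k$ that $\sigma_\pi$ has been defined as a bijection from $n_{2k}$ onto $n_{2k}$. To move to the odd stage I would choose $n_{2k+1} > n_{2k}$ large enough that $\pi^{-1}[n_{2k}] \subseteq n_{2k+1}$, equivalently $n_{2k} \subseteq \pi[n_{2k+1}]$; such $n_{2k+1}$ exists because $\pi^{-1}[n_{2k}]$ is finite. The target set $\pi[n_{2k+1}] \setminus n_{2k}$ then has the same cardinality $n_{2k+1} - n_{2k}$ as the domain interval $[n_{2k}, n_{2k+1})$, and I define $\sigma_\pi$ on this interval by any bijection onto that target. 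By construction, $\sigma_\pi[n_{2k+1}] = n_{2k} \cup (\pi[n_{2k+1}] \setminus n_{2k}) = \pi[n_{2k+1}]$.

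Symmetrically, at stage $2k+1$ with $\sigma_\pi$ a bijection from $n_{2k+1}$ onto $\pi[n_{2k+1}]$, I would pick $n_{2k+2} > n_{2k+1}$ large enough that $\pi[n_{2k+1}] \subseteq n_{2k+2}$ (possible since $\pi[n_{2k+1}]$ is finite), and extend $\sigma_\pi$ along $[n_{2k+1}, n_{2k+2})$ via any bijection onto $n_{2k+2} \setminus \pi[n_{2k+1}]$. Then $\sigma_\pi[n_{2k+2}] = \pi[n_{2k+1}] \cup (n_{2k+2} \setminus \pi[n_{2k+1}]) = n_{2k+2}$, closing the induction.

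Since $n_k \to \infty$, the resulting $\sigma_\pi$ is defined on all of $\omega$, and it is a bijection of $\omega$ because each extension was built as a bijection on its interval with disjoint images that together exhaust $\omega$. The even stages provide infinitely many $n$ with $\sigma_\pi[n] = n$ and the odd stages provide infinitely many $n$ with $\sigma_\pi[n] = \pi[n]$, as required. There is essentially no obstacle: the only thing to verify at each stage is the existence of a sufficiently large $n_{k+1}$, which is immediate from the finiteness of $\pi^{-1}[n_{2k}]$ and $\pi[n_{2k+1}]$.
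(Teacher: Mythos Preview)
Your proof is correct. Note that the paper does not actually prove this lemma: it is quoted verbatim from~\cite[Lemma 4]{BBBHHL20} and used as a black box, so there is no in-paper proof to compare against. Your back-and-forth construction is exactly the standard argument for this fact (and is essentially how it is done in~\cite{BBBHHL20}): alternately grow the partial permutation so that its image is an initial segment $n_{2k}$ at even stages and equals $\pi[n_{2k+1}]$ at odd stages, choosing $n_{k+1}$ large enough at each step so that the required containment holds. The cardinality bookkeeping and the verification that the union is a bijection of $\omega$ are handled correctly.
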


\begin{proposition} \label{osc-all}
$\ddd_{X,\{ \osc \} } = \ddd_{X, \all}$ for all choices of $X \sub [0,1]$. 
\end{proposition}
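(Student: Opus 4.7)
The plan is as follows. The inequality $\ddd_{X,\all}\leq \ddd_{X,\{\osc\}}$ is immediate from the monotonicity remarked on right after Definition~\ref{general-density-number}, since $\{\osc\}\sub \all$; so I focus on the nontrivial direction $\ddd_{X,\{\osc\}}\leq \ddd_{X,\all}$.

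Start with a witness $\Pi\sub\Sym(\omega)$ for $\ddd_{X,\all}$. The idea is to enrich $\Pi$ by adjoining, for each $\pi\in\Pi$, a carefully constructed $\sigma_\pi$ designed to interleave the behavior of the identity and of $\pi$ on initial segments. Concretely, I would apply Lemma~\ref{osc-lemma} to the permutation $\pi^{-1}$, obtaining $\tau_\pi$ with $\tau_\pi[n]=n$ for infinitely many $n$ and $\tau_\pi[n]=\pi^{-1}[n]$ for infinitely many $n$, and then set $\sigma_\pi:=\tau_\pi^{-1}$. A direct set-theoretic check shows there are infinite sets $I_\pi,J_\pi\sub\omega$ with $\sigma_\pi^{-1}[n]=n$ for $n\in I_\pi$ and $\sigma_\pi^{-1}[n]=\pi^{-1}[n]$ for $n\in J_\pi$. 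Let $\Pi':=\Pi\cup\{\sigma_\pi:\pi\in\Pi\}$; this has the same cardinality as $\Pi$.

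To verify that $\Pi'$ is a witness for $\ddd_{X,\{\osc\}}$, fix an infinite-coinfinite $A$ with $d(A)=r\in X$ and pick $\pi\in\Pi$ with $d(\pi[A])\neq r$. Using the elementary identity $|\sigma_\pi[A]\cap n|=|A\cap \sigma_\pi^{-1}[n]|$, one computes $d_n(\sigma_\pi[A])=d_n(A)$ for $n\in I_\pi$ and $d_n(\sigma_\pi[A])=d_n(\pi[A])$ for $n\in J_\pi$. If $d(\pi[A])=s\in[0,1]\sem\{r\}$, then along $I_\pi$ and $J_\pi$ the sequence $d_n(\sigma_\pi[A])$ converges to $r$ and to $s$ respectively, so $\underline{d}(\sigma_\pi[A])\leq\min(r,s)<\max(r,s)\leq\overline{d}(\sigma_\pi[A])$, yielding $d(\sigma_\pi[A])=\osc$; thus $\sigma_\pi\in\Pi'$ works. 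If instead $d(\pi[A])=\osc$, then $\pi$ itself already lies in $\Pi'$ and directly witnesses the $\{\osc\}$-requirement, so no new construction is needed in that case.

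The main obstacle to anticipate is that Lemma~\ref{osc-lemma} is stated in terms of images $\sigma_\pi[n]$, whereas asymptotic density measures intersections with initial segments $\{0,\dots,n-1\}$, which are naturally controlled by \emph{preimages}. The inversion trick $\sigma_\pi=\tau_\pi^{-1}$ is what converts the image condition produced by the lemma (applied to $\pi^{-1}$) into the preimage condition $\sigma_\pi^{-1}[n]=\pi^{-1}[n]$ needed for the density computation. Keeping $\pi$ itself in $\Pi'$ alongside $\sigma_\pi$ is essential too: it sidesteps any need to check that the full oscillation of $\pi[A]$ is detected along the sparse set $J_\pi$, a point that Lemma~\ref{osc-lemma} does not guarantee and which would otherwise be delicate.
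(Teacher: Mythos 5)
Your proof is correct and follows essentially the same route as the paper: keep $\Pi$ and augment it with ``hybrid'' permutations $\sigma_\pi$ produced from Lemma~\ref{osc-lemma}, then split into the two cases $d(\pi[A])=\osc$ (where $\pi$ itself already works) and $d(\pi[A])\in[0,1]\sem\{d(A)\}$ (where $\sigma_\pi$ forces oscillation). Where you go beyond the paper is in spelling out that Lemma~\ref{osc-lemma} controls \emph{images} $\sigma_\pi[n]$, while the computation of $d_n(\sigma_\pi[A])=|A\cap\sigma_\pi^{-1}[n]|/n$ needs control of \emph{preimages} $\sigma_\pi^{-1}[n]$; the condition $\sigma_\pi[n]=n$ automatically yields $\sigma_\pi^{-1}[n]=n$, but $\sigma_\pi[n]=\pi[n]$ does not by itself yield $\sigma_\pi^{-1}[n]=\pi^{-1}[n]$. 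Your remedy --- apply the lemma to $\pi^{-1}$ and take $\sigma_\pi:=\tau_\pi^{-1}$ --- is exactly the right way to turn the image condition into the needed preimage condition, and it makes fully rigorous a step that the paper compresses into ``by the properties of $\sigma_\pi$.'' This is a worthwhile clarification rather than a different argument.
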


\begin{proof}
Only $\ddd_{X, \{ \osc \} } \leq \ddd_{X,\all}$ needs proof. Let $\Pi$ be a witness for $\ddd_{X, \all}$ and show that $\Pi \cup \{  \sigma_\pi : \pi \in \Pi \}$ is
a witness for $\ddd_{X, \{ \osc \} }$. Let $x \in D_X$, and let $\pi \in \Pi$ be such that $d (\pi [x]) \neq d(x)$. If $d ( \pi [x]) = \osc$, we are done.
On the other hand, if $d (\pi [x]) \in [0,1]$, then, by the properties of $\sigma_\pi$, $d(\sigma_\pi [x]) = \osc$, and the proof is complete.
\end{proof}

Lemma~\ref{osc-lemma} is not helpful for proving the dual equality, and we do not know whether it holds. For rearrangement numbers it does, as shown by van der Vlugt  using an argument involving sequential composition of relations (see~\cite[Theorem 3.3.5]{vdV19}).

Let us summarize the results of this section about $\ddd_{X,Y}$ for the most interesting choices of $X$ and $Y$, namely, when either set is any of $\all$, $[0,1]$,
$(0,1)$, $\{ 0,1 \}$, or $\{ \osc \}$. This will subsume Corollary~\ref{density-nonmeager-cor2}.

\begin{corollary} \label{dd-summary}
\begin{enumerate}
\item If $\osc\in Y$, then $\ddd_{ [0,1], Y} = \ddd_{ \{ 0,1 \} , Y } = \non (\M)$.
\item If $\osc \notin Y$, then $\ddd_{ \all, Y} = \ddd_{ [0,1] ,Y } = \ddd_{ \{ 0,1 \} , Y}  = \cc$; also $\ddd_{ \{ \osc  \}, (0,1) } = \cc$.
\item $\cov (\N) \leq \ddd_{ (0,1) , \all} = \ddd_{ (0,1) , \{ \osc \} } \leq \min \{ \rr, \non (\M ) \}$.
\item $\max \{ \cov (\N), \cov (\M), \bb \} \leq \ddd_{(0,1), [0,1]} \leq \ddd_{(0,1), (0,1) } , \ddd_{(0,1), \{ 0,1 \} } \leq \rr$.
\item $\max \{ \bb, \cov (\M) \} \leq \ddd_{ \{ \osc \} , \all } = \ddd_{ \{ \osc \} , [0,1] } \leq \ddd_{ \{ \osc \} , \{ 0,1 \} } \leq \rr$.
\item $\max \{ \non (\M) , \cov (\M) \} \leq  \ddd_{ \all , \all } = \max \{ \non (\M) , \ddd_{\{ \osc \} , \all} \} \leq \max \{ \non (\M), \rr \}$.
\end{enumerate}
\end{corollary}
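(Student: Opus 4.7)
The plan is to assemble Corollary~\ref{dd-summary} from the section's Tukey-level theorems, using two easy monotonicity moves throughout: the built-in monotonicity $\ddd_{X',Y'} \leq \ddd_{X,Y}$ whenever $X' \subseteq X$ and $Y' \supseteq Y$, and Proposition~\ref{osc-all} to replace $Y = \{\osc\}$ by $Y = \all$ whenever $X \subseteq [0,1]$. For each of the six items I would match an upper bound with a lower bound, and then fuse the pieces through monotonicity.

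For the upper bounds: part (1) is immediate from Theorem~\ref{density-nonmeager2}(1) applied with $X = [0,1]$ or $\{0,1\}$ and any $Y$ containing $\osc$. For (2), Theorem~\ref{density-continuum} applies either through its first case (when $0 \in X$ or $1 \in X$, handling $\all$, $[0,1]$, and $\{0,1\}$) or through its second case (applied with $X = \{\osc\}$ and $Y = (0,1)$, which contains neither $0$ nor $1$). For (3), Proposition~\ref{osc-all} gives the middle equality, Theorem~\ref{density-nonmeager2}(1) supplies the bound $\non(\M)$, and Theorem~\ref{density-reaping}(1) applied to $Y = \all$ supplies $\rr$. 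In (4), monotonicity places $\ddd_{(0,1),[0,1]}$ below the other two; Theorem~\ref{density-reaping}(1) covers $Y = [0,1]$ and $\{0,1\}$, while for $Y = (0,1)$ case (2) of the same theorem applies with any $r \in (0,1)$, since both $r + x(1-r)$ and $x(1-r)$ lie in $(0,1)$ whenever $x \in (0,1)$. Part (5) is monotonicity combined with Theorem~\ref{density-reaping}(1) applied to $Y = \{0,1\}$, and (6) combines monotonicity with the bound just obtained in (5).

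For the lower bounds: Theorem~\ref{density-nonmeager2}(2) delivers $\non(\M)$ in (1) and (6); Theorem~\ref{density-covnull} delivers $\cov(\N)$ in (3) and (4); Theorems~\ref{density-unbounding} and~\ref{density-covmeager} deliver $\bb$ and $\cov(\M)$ in (4), (5), and (6). In each case the relevant side hypothesis is verified by inspection: $0 \in X$ or $1 \in X$ for the first, $X \cap [0,1] \neq \emptyset$ for the second, and $\osc \in X$ or $\osc \notin Y$ for the last two.

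The remaining content is the three equalities. The middle equality in (3) is exactly Proposition~\ref{osc-all}. The equality $\ddd_{\{\osc\},\all} = \ddd_{\{\osc\},[0,1]}$ in (5) is automatic: when $d(A) = \osc$, any witnessing $\pi$ must satisfy $d(\pi[A]) \in Y$ with $d(\pi[A]) \neq \osc$, so the effective constraint on $Y$ is $Y \cap [0,1]$, and $\all \cap [0,1] = [0,1]$. The equality $\ddd_{\all,\all} = \max\{\non(\M), \ddd_{\{\osc\},\all}\}$ in (6) uses the decomposition $\all = [0,1] \cup \{\osc\}$: a family $\Pi$ witnesses $\ddd_{\all,\all}$ exactly when it simultaneously witnesses $\ddd_{[0,1],\all} = \non(\M)$ (via part (1)) and $\ddd_{\{\osc\},\all}$, and the union of two minimal witnesses for these pieces witnesses the whole, forcing the maximum. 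The only real nuisance I anticipate is bookkeeping to verify the side hypotheses of Theorem~\ref{density-reaping}, in particular that its case (2) does apply to $Y = (0,1)$ in part (4); everything else is a transparent reduction to results already proved in the section.
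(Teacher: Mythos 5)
Your proof is correct and follows essentially the same route as the paper: assemble the six items from the section's Tukey-level theorems (via their corollaries), use Proposition~\ref{osc-all} for the middle equality in (3), and use the disjoint decomposition $\all = [0,1] \cup \{\osc\}$ for the equality in (6). One small point where you usefully fill in what the paper leaves implicit: the equality $\ddd_{\{\osc\},\all} = \ddd_{\{\osc\},[0,1]}$ in (5) genuinely does need the observation that when $X = \{\osc\}$ the relation $R_Y$ only sees $Y \setminus \{\osc\} = Y \cap [0,1]$, and your explicit justification of this is sound; the paper treats it as immediate from the cited corollaries without comment. Your side-condition checks (in particular that case~(2) of Theorem~\ref{density-reaping} applies with any $r \in (0,1)$ to give $\ddd_{(0,1),(0,1)} \leq \rr$) are all accurate.
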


\begin{proof}
By Corollaries~\ref{density-nonmeager-cor2}, \ref{density-covnull-cor}, \ref{density-unbounding-cor}, \ref{density-covmeager-cor}, \ref{density-continuum-cor},
and~\ref{density-reaping-cor}.

Note that for $\ddd_{ (0,1) , \all} = \ddd_{ (0,1) , \{ \osc \} } $ in (3) we use Proposition~\ref{osc-all}. For (6) use 
$\ddd_{ \all , \all } = \max \{ \ddd_{ [0,1], \all } , \ddd_{\{ \osc \} , \all} \}$, (1), and (5).
\end{proof}

Table \ref{table} and Diagram \ref{diagram} below show the relationship between
the cardinals $\ddd_{X,Y}$ for the above five choices of $X$ and
$Y$ and some of the classical cardinal invariants of the continuum.

\begin{table}[htbp]
    \centering
\begin{tabular}{|c|c|c|c|c|}
\hline
   \diagbox{$X$}{$Y$}  & $(0,1)$ & $\{0,1\}$ (or $[0,1]$) & $\mathrm{all}$ & $\{\mathrm{osc}\}$ \\
\hline
 \multirow{2}{*}{$(0,1)$} & $\mathfrak{b}, \mathrm{cov}(\mathcal{M}), \mathrm{cov}(\mathcal{N})$ & $\mathfrak{b}, \mathrm{cov}(\mathcal{M}), \mathrm{cov}(\mathcal{N})$ & $\mathrm{cov}(\mathcal{N})$ & $\mathrm{cov}(\mathcal{N})$ \\
 & $\mathfrak{r}$ & $\mathfrak{r}$ & $\mathfrak{r}, \mathrm{non}(\mathcal{M})$ & $\mathfrak{r}, \mathrm{non}(\mathcal{M})$ \\
\hline
  $\{0,1\}$ & $\mathfrak{c}$ & $\mathfrak{c}$ & $\mathrm{non}(\mathcal{M})$ & $\mathrm{non}(\mathcal{M})$ \\
\hline
  $[0,1]$ & $\mathfrak{c}$ & $\mathfrak{c}$ & $\mathrm{non}(\mathcal{M})$ & $\mathrm{non}(\mathcal{M})$ \\
\hline
  \multirow{2}{*}{$\mathrm{all}$}  & \multirow{2}{*}{$\mathfrak{c}$} & \multirow{2}{*}{$\mathfrak{c}$} & $\mathrm{non}(\mathcal{M}),\mathrm{cov}(\mathcal{M})$ & \multirow{2}{*}{-} \\

& & & $\max\{\mathrm{non}(\mathcal{M}),\mathfrak{r}\}$  & \\
\hline
  \multirow{2}{*}{$\{\mathrm{osc}\}$}  & \multirow{2}{*}{$\mathfrak{c}$} & $\mathfrak{b}, \mathrm{cov}(\mathcal{M})$ & $\mathfrak{b}, \mathrm{cov}(\mathcal{M})$ & \multirow{2}{*}{-} \\
  &  & $\mathfrak{r}$ & $\mathfrak{r}$ & \\
\hline
\end{tabular}
\vspace{5pt}
\caption{Upper and lower bounds for the cardinals in 
Corollary~\ref{dd-summary}} \label{table} \small \justifying \vspace{-10pt} $X$ is in the left column and $Y$ in the top row; the upper row of cells containing two rows gives lower bounds, and the lower row, upper bounds. The columns for $Y=\{0,1\}$ and $Y=[0,1]$ coincide, so we merged them into one. This means the corresponding cardinals have the same bounds, but it is unclear whether they coincide. 
\end{table}

We shall prove two independence results (Theorems~\ref{density-lessthanb} and~\ref{density-largerthancofnull}) in the next section
showing that some of the inequalities in the corollary are consistently strict, but a number of questions remain open (see in particular
Questions~\ref{zeroone-covnull} and~\ref{zeroone-reaping} in Section 6). In particular, we do not know whether any of the $(X,Y)$-density numbers is distinct
from all classical cardinal invariants.

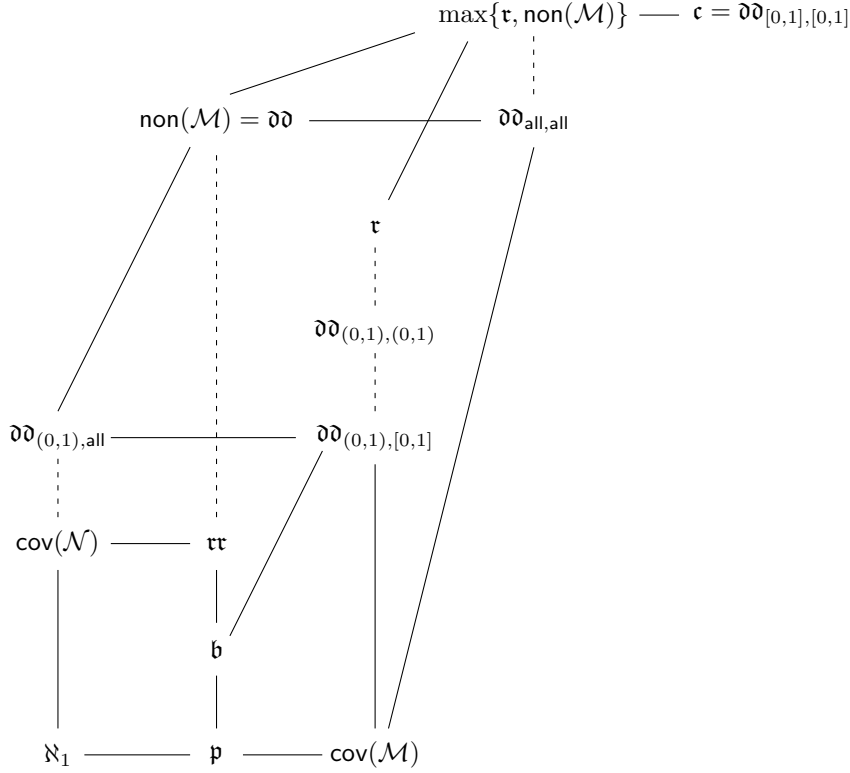
\begin{figure}[htbp]
\begin{picture}(300,300)(0,0)
\thinlines
\put(0,0){\makebox(40,20){$\aleph_1$}}
\put(60,0){\makebox(40,20){$\pp$}}
\put(60,40){\makebox(40,20){$\bb$}}
\put(60,80){\makebox(40,20){$\rrr$}}
\put(0,80){\makebox(40,20){$\cov (\N)$}}
\put(0,120){\makebox(40,20){$\ddd_{(0,1),\all}$}}
\put(120,0){\makebox(40,20){$\cov (\M)$}}
\put(120,120){\makebox(40,20){$\ddd_{(0,1),[0,1]}$}}
\put(120,160){\makebox(40,20){$\ddd_{(0,1),(0,1)}$}}
\put(120,200){\makebox(40,20){$\rr$}}
\put(60,240){\makebox(40,20){$\non (\M) = \ddd$}}
\put(180,240){\makebox(40,20){$\ddd_{\all,\all}$}}
\put(180,280){\makebox(40,20){$\max \{ \rr, \non (\M) \} $}}
\put(240,280){\makebox(100,20){$\cc = \ddd_{[0,1],[0,1]}$}}

\put(20,20){\line(0,1){60}}
\put(80,20){\line(0,1){20}}
\put(80,60){\line(0,1){20}}
\put(140,20){\line(0,1){100}}
\put(30,10){\line(1,0){40}}
\put(40,90){\line(1,0){30}}
\put(90,10){\line(1,0){30}}
\put(40,130){\line(1,0){70}}
\put(115,250){\line(1,0){65}}
\put(240,290){\line(1,0){15}}
\put(85,55){\line(1,2){35}}
\put(145,20){\line(1,4){55}}
\put(145,220){\line(1,2){30}}
\put(20,140){\line(1,2){50}}
\put(85,260){\line(3,1){70}}

\multiput(20,100)(0,5){5}{\line(0,1){2}}
\multiput(140,140)(0,5){5}{\line(0,1){2}}
\multiput(140,180)(0,5){5}{\line(0,1){2}}
\multiput(200,260)(0,5){5}{\line(0,1){2}}
\multiput(80,100)(0,5){28}{\line(0,1){2}}
\end{picture}
\vspace{-10pt}
\caption{Some cardinals}
\small \justifying
For simplicity, we consider here only the cases $X,Y = \all, (0,1), [0,1]$: cardinals grow as one moves right and up; straight lines
signify that strict inequality is consistent, dotted lines mean that we do
not know whether equality holds or strict inequality is consistent. Note
that $\ddd = \ddd_{[0,1], \all}$ and that $\cc$ is also equal to each
of $\ddd_{[0,1],(0,1)}$, $\ddd_{\all,[0,1]}$, and $\ddd_{\all,(0,1)}$.
\label{diagram}
\end{figure}



\section{Variations on the density number: consistency results}


Our first consistency result says that the unbounding number $\bb$ is not a lower bound of $\ddd_{ (0,1) , \all}$;
it is based on the fact that $\sigma$-centered forcing does not increase the latter cardinal.

\begin{theorem}  \label{density-lessthanb}
    ${\mathfrak{dd}}_{X,\all}  < \mathfrak{p}$ is consistent for all $X \sub (0,1)$. In particular, 
    $\mathfrak{dd}_{(0,1), \all} < \mathfrak{b}$ and $\mathfrak{dd}_{(0,1), \all} < \mathfrak{dd}$ are consistent.
\end{theorem}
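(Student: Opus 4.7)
The plan is to start from a ground model $V \models \mathrm{CH}$, fix in $V$ a witness $\Pi \sub \Sym(\omega)$ of size $\aleph_1$ for $\ddd_{X, \all}$, and perform a finite-support iteration of $\sigma$-centered forcings of length $\omega_2$ with the standard bookkeeping that forces $\mathrm{MA}(\sigma\text{-centered})$ in the extension. By Bell's theorem this gives $\pp = \cc = \aleph_2$ in $V^{P_{\omega_2}}$. The target is to show that $\Pi$ remains a witness throughout the iteration, so that $\ddd_{X, \all} \leq \aleph_1 < \pp$ in the extension. The ``in particular'' statements then follow at once from $\pp \leq \bb \leq \dd$.

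The heart of the argument is a single-step preservation lemma: if $P$ is $\sigma$-centered in $V$ and $\Pi$ witnesses $\ddd_{X, \all}$ in $V$, then $\Pi$ remains a witness in $V^P$. Suppose for contradiction that some $p \in P$ forces that a name $\dot A$ denotes an infinite-coinfinite set with $d(\dot A) = r \in X$ and $d(\pi[\dot A]) = r$ for every $\pi \in \Pi$. Write $P = \bigcup_n P_n$ with each $P_n$ centered. The idea is to exploit centeredness to extract a ground-model set $A^* \in D_X$ satisfying $d(\pi[A^*]) = d(A^*)$ for every $\pi \in \Pi$, contradicting that $\Pi$ was already a witness in $V$. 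Concretely, one works with the potential-membership sets $A^n_q := \{k \in\omega : \exists\, q' \leq q,\; q' \in P_n,\; q' \Vdash k \in \dot A\}$ computed in $V$; centeredness of $P_n$ guarantees that every finite subset of $A^n_q$ is simultaneously forceable into $\dot A$, and together with the tight density constraint $d(\dot A) = r \in (0,1)$ one distills a ground-model $A^*$ whose density and whose densities under ground-model permutations agree with the forced values for $\dot A$.

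Once the single-step lemma is in hand, the whole iteration is handled by a standard ccc-style induction: any infinite-coinfinite subset of $\omega$ in $V^{P_{\omega_2}}$ has a nice name supported on a countable subiteration and therefore appears in some intermediate model $V^{P_\alpha}$, where the lemma can be applied. The main obstacle is the distillation of $A^*$ in the single-step lemma: the ground-model approximations derived from the centered pieces must simultaneously lie in $D_X$ and be invariant under every $\pi \in \Pi$, which is delicate precisely because $X \sub (0,1)$ excludes the extreme densities $0$ and $1$, where by Theorem~\ref{density-nonmeager2}(2) we would in fact have $\non(\M) \leq \ddd_{X,\all}$ and the desired consistency would fail.
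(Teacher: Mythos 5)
Your overall forcing framework matches the paper's: start from CH, force with a finite support iteration of $\sigma$-centered posets to make $\pp$ large, and show that the $\aleph_1$ ground-model permutations remain a witness for $\ddd_{X,\all}$. (One small correction of emphasis: the paper does not try to preserve an arbitrary pre-chosen witness $\Pi$; it takes $\Pi = \Sym(\omega)^V$, of size $\aleph_1$ under CH, and shows that, for every name for a set of density $r\in(0,1)$, some ground-model permutation is forced to destroy that density. The needed permutation may depend on the name, which is fine since any ground-model permutation is available.)

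The genuine gap is in your single-step lemma. Your plan is to derive a contradiction by ``distilling'' a ground-model $A^*\in D_X$ with $d(\pi[A^*])=d(A^*)$ for every $\pi\in\Pi$ from the potential-membership sets $A^n_q$. This target is not attainable as stated. If $A^n_q$ is taken to be the union of all $k$ that some extension of $q$ in $P_n$ forces into $\dot A$, nothing controls its density; centeredness guarantees that finitely many of its elements can be forced in simultaneously, but the whole set can easily be cofinite. More fundamentally, there is no reason a single $A^*$ should be invariant under all of $\Pi$, and no mechanism in your sketch forces agreement between $d(\pi[A^*])$ and the value that $d(\pi[\dot A])$ is forced to take. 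The paper solves the problem quite differently: it fixes once and for all an interval partition $(I_n)$ with $|I_{n+1}|\geq n\sum_{i\leq n}|I_i|$, introduces $(j,k)$-sets (sets with $|A\cap I_n|/|I_n|\in[j/k,(j+1)/k]$ almost always) and the notion of a permutation \emph{disrupting} a $(j,k)$-set (it fixes the intervals and, infinitely often, pushes $A\cap I_n$ to an initial segment of $I_n$). For a $\sigma$-centered $\PP=\bigcup_m P_m$ and a $\PP$-name $\dot A$ for a $(j,k)$-set, it picks, for each $m$ and each interval $I_n$, one value $A_m\cap I_n$ that no condition in $P_m$ rules out; this works because each $I_n$ is finite and $P_m$ is centered, so only one candidate per $(m,n)$ is needed, keeping $A_m$ a $(j,k)$-set. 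Any $\pi$ disrupting all the $A_m$ is then forced to disrupt $\dot A$, which forces $\overline{d}(\pi[\dot A])=1$, hence destroys density. The argument is constructive (build a disrupting $\pi$ in $V$, a routine diagonalization against countably many $(j,k)$-sets), not a reductio. You have not produced an analogue of this interval-by-interval structure, and without it the ``distillation'' does not go through.

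A secondary but real issue is the limit step. Your ``standard ccc-style induction'' asserting that every name appears at an intermediate stage is too glib: a name for a subset of $\omega$ does factor through some $\PP_\alpha$ with $\alpha<\kappa$ (since $\kappa$ has uncountable cofinality), but $\alpha$ itself may have cofinality $\omega$, and one still has to push the $(j,k)$-set approximation across such a limit. This is precisely the content of the paper's Lemma~\ref{density-lessthanb-3}, which builds a decreasing fusion of tail conditions deciding $\dot A$ interval by interval and then combines it with the inductive hypothesis at the earlier coordinates. That work is not optional and your sketch does not supply it.
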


\begin{proof}
    Let $(I_n : n \in \omega)$ be an interval partition of $\omega$ such that $|I_{n+1}| \geq n \cdot \sum_{i \leq n} | I_i |$ for all $n$. Let $k\in\omega$ and $j$ such that $0 < j < k-1$. Say that $A \subseteq \omega$ is a {\em $(j,k)$-set} if $${ | A \cap I_n | \over |I_n|} \in \left[ {j\over k}, {j+1 \over k } \right]$$ for almost all $n \in \omega$. It is easy to see that if $A$ is a $(j,k)$-set then $\underline{d} (A) \leq {j+1 \over k}$ and $\overline{d} (A) \geq {j \over k}$. In particular, if $A$ has asymptotic density then $d(A) \in \left[ {j \over k }, {j + 1 \over k } \right]$. A permutation $\pi$ {\em disrupts} the $(j,k)$-set $A$ if $\pi$ fixes all $I_n$ and for infinitely many $n$, $\pi [ A \cap I_n]$ is an initial segment of $I_n$. Note that $\pi [A]$ is still a $(j,k)$-set. 

\begin{lemma}
    If $A$ is a $(j,k)$-set and $\pi$ disrupts $A$ then $\overline{d} ( \pi [A] ) = 1$. In particular, $\pi[A]$ does not have asymptotic density.
\end{lemma}

\begin{proof}
    Let $\varepsilon > 0$. Let $n > {k \over \varepsilon}$ such that $\pi [A \cap I_{n+1}]$ is an initial segment of $I_{n+1}$. Let $\ell = \max ( \pi [ A \cap I_n] ) + 1$. Then $${| \pi [A] \cap \ell | \over \ell} \geq { {j\over k} |I_{n+1}| \over \sum_{i\leq n} |I_i| + {j \over k} |I_{n+1}| } \geq { {j \over k} | I_{n+1} | \over \left({1 \over n} + {j \over k} \right) |I_{n+1}| } \geq {j \over \varepsilon + j} = {j (j - \varepsilon) \over j^2 - \varepsilon^2} \geq 1 - {\varepsilon \over j} $$ as required.
\end{proof}

\begin{lemma}   \label{density-lessthanb-2}
    Let ${\mathbb{P}}$ be a $\sigma$-centered forcing notion. Let $\dot A$ be a ${\mathbb{P}}$-name for a $(j,k)$-set. Then there are $(j,k)$-sets $(A_m : m \in \omega)$ such that if $\pi$ disrupts all $A_m$, then $\pi$ is forced to disrupt $\dot A$.
\end{lemma}

\begin{proof}
    Assume ${\mathbb{P}} = \bigcup_m P_m$ where all $P_m$ are centered. For each $m$ and $n$ let $A_m \cap I_n$ be such that no condition in $P_m$ forces that $\dot A \cap I_n \neq A_m \cap I_n$. Since there are only finitely many possibilities for this intersection the centeredness of $P_m$ implies that such a $A_m \cap I_n$ can indeed be found. Now assume that $\pi$ disrupts all $A_m$. Let $n_0 \in \omega$ and $p \in {\mathbb{P}}$. There is $m$ such that $p \in P_m$. Find $n \geq n_0$ such that $\pi [ A_m \cap I_n]$ is an initial segment of $I_n$. There is $q \leq p$ such that $q \forces \dot A \cap I_n = A_m \cap I_n$. So $q$ forces that $\pi [\dot A \cap I_n]$ is an initial segment of $I_n$.
\end{proof}

The following is a standard argument for finite support iterations of ccc forcing. We include the proof for the sake of completeness.

\begin{lemma}  \label{density-lessthanb-3}
    Let $\delta$ be a limit ordinal, and let $(\mathbb{P}_\alpha : \alpha \leq \delta)$ be a finite support iteration of ccc forcing such that for any $\alpha < \delta$ and any $\mathbb{P}_\alpha$-name $\dot A$ for a $(j,k)$-set there are $(j,k)$-sets $(A_m : m \in \omega)$ such that if $\pi$ disrupts all $A_m$, then $\pi$ is forced to disrupt $\dot A$. Then for any $\mathbb{P}_\delta$-name $\dot A$ for a $(j,k)$-set there are $(j,k)$-sets $(A_m : m \in \omega)$ such that if $\pi$ disrupts all $A_m$, then $\pi$ is forced to disrupt $\dot A$.
\end{lemma}

\begin{proof}
If $cf (\delta) > \omega$ there is nothing to show. So assume without loss of generality that $\delta = \omega$, and let $\dot A$ be a $\PP_\omega$-name for a 
$(j,k)$-set. Fix $m \in \omega$. In $V[G_m]$, where $G_m$ is $\PP_m$-generic over $V$, we may find a decreasing sequence of conditions $(p^\ell_m : \ell \in \omega )$
in the remainder forcing $\PP_{[m,\omega)}$ deciding $\dot A$, i.e., there is a $(j,k)$-set $A_m$ such that $p^\ell_m \forces \dot A \cap I_\ell = A_m \cap I_\ell$ for all
$\ell \in \omega$. (We may assume $p^0_m$ decides from which $I_n$ onwards $\dot A$ satisfies the condition of being a $(j,k)$-set, and then $A_m$ will automatically
be a $(j,k)$-set as well.) Back in the ground model $V$, let $(\dot p^\ell_m : \ell \in \omega)$ and $\dot A_m$ be $\PP_m$-names for $(p^\ell_m : \ell \in \omega )$
and $A_m$, respectively.

By assumption, for each $m$ there are $(j,k)$-sets $(A^n_m : n \in \omega )$ such that if $\pi$ disrupts all $A^n_m$, $n\in\omega$, then $\pi$
is forced to disrupt $\dot A_m$. We show that the family $(A^n_m : n,m \in \omega)$ is as required for $\dot A$. Assume $\pi$ disrupts all $A^n_m$, $n,m \in\omega$.
Let $p \in \PP_\omega$ and $\ell_0 \in \omega$ be arbitrary. We need to find $q \leq p$ and $\ell \geq \ell_0$ such that $\pi [ \dot A \cap I_\ell]$ is forced to be an initial
segment of $I_\ell$. Find $m$ such that $p \in \PP_m$. Find $p' \leq p$ in $\PP_m$ and $\ell \geq \ell_0$ such that $p' \forces `` \pi [ \dot A_m \cap I_\ell] $ is an
initial segment of $I_\ell$". We may assume $p'$ decides $\dot p^\ell_m$ as well, $p' \forces \dot p^\ell_m = p^\ell_m$. Let $q = p' \cup p_m^\ell$.
Then $q \forces \dot A \cap I_\ell = \dot A_m \cap I_\ell$ and thus $q$ also forces that $\pi [ \dot A \cap I_\ell]$ is  an initial
segment of $I_\ell$, as required.
\end{proof}

We are ready to complete the proof of the theorem. Assume CH, let $\kappa > \aleph_1$ be a regular cardinal, and let $({\mathbb{P}}_\alpha, \dot {\mathbb{Q}}_\alpha : \alpha < \kappa)$ 
be a finite support iteration of $\sigma$-centered partial orders forcing $\mathfrak{p} = \kappa$. It suffices to show that in the generic extension $V[G_\kappa]$,
$\mathfrak{dd}_{ (0,1), \all} = \aleph_1$, and that this is witnessed by the ground model permutations. Let $r \in (0,1)$ in $V[G_\kappa]$.
Let $\dot r$ be a $\PP_\kappa$-name for this real. We can find a condition $p \in {\mathbb{P}}_\kappa$ and numbers $k$ and $j$ with $0 < j < k-1$ 
such that $p$ forces $\dot r \in \left( {j\over k }, {j+1 \over k} \right)$. By strengthening $p$, if necessary, we may assume that for some $m\geq 3$, 
$$ p \forces \dot r \in \left[ {mj + 1 \over mk} , {mj + m -1 \over mk} \right]$$ 
Work below $p$. Let $\dot A$ be a $\mathbb{P}_\kappa$-name for a set of density $\dot r$. We first check that $p$ forces that $\dot A$ is a $(j,k)$-set.

To see this work in the generic extension with $p$ belonging to the generic filter. Suppose this fails and assume (without loss of generality) that there are infinitely many $n$ with ${|A \cap I_n| \over |I_n|} > {j+1 \over k}$. Let $n$ be such that ${|A \cap I_{n+1}| \over |I_{n+1}|} > {j+1 \over k}$. Then $${| A \cap ( \max (I_{n+1}) + 1 ) | \over \max (I_{n+1}) + 1} \geq { | A \cap I_{n+1}| \over \sum_{i \leq n} |I_i| + |i_{n+1}| } \geq { |A \cap I_{n+1} | \over \left( 1 + {1 \over n} \right) |I_{n+1}| } > {j+1 \over k} \cdot {1 \over {1 + {1\over n}}} $$
which is larger than ${mj + m - 1 \over mk}$ for large enough $n$, a contradiction.

By the two previous lemmas, there are $(j,k)$-sets $(A_m : m \in \omega)$ in the ground model such that if $\pi$ disrupts all $A_m$ then $p$ forces that $\pi$ disrupts $\dot A$. Let $\pi$ be a permutation satisfying this assumption. Thus $p$ forces that $\pi [\dot A]$ does not have asymptotic density (by the above it forces $\underline{d} (\pi [\dot A]) \leq {j +1 \over k}$ and $\overline{d} (\pi [\dot A] ) = 1$). This completes the argument showing that the ground model permutations witness $\mathfrak{dd}_{ (0,1), \all}  = \aleph_1$.
\end{proof}

\begin{theorem}   \label{density-largerthancofnull}
    $\mathfrak{dd}_{ (0,1) , [0,1] } > \cof (\mathcal N)$ is consistent.
\end{theorem}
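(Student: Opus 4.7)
My plan is to build, over a ground model of CH, a countable support iteration $(\mathbb{P}_\alpha, \dot{\mathbb{Q}}_\alpha : \alpha < \omega_2)$ of proper forcing so that in the final extension $\cof(\mathcal{N}) = \aleph_1$ while $\mathfrak{dd}_{(0,1),[0,1]} = \mathfrak{c} = \aleph_2$. At stage $\alpha$ the iterand $\dot{\mathbb{Q}}_\alpha$ should add a generic set $\dot{A}_\alpha \subseteq \omega$ with $d(\dot{A}_\alpha) = r_\alpha \in (0,1)$ which is \emph{density-indestructible} against permutations in the intermediate model $V^{\mathbb{P}_\alpha}$: whenever $\pi \in \Sym(\omega) \cap V^{\mathbb{P}_\alpha}$ and $d(\pi[\dot{A}_\alpha])$ exists, it equals $r_\alpha$. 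Combined with a standard bookkeeping/reflection argument—using the $\aleph_2$-cc of $\mathbb{P}_{\omega_2}$ to reflect every $\aleph_1$-sized family $\Pi \subseteq \Sym(\omega)$ down to some $V^{\mathbb{P}_\alpha}$ and then using $\dot{A}_\beta$ for some $\beta \geq \alpha$ to defeat $\Pi$—this forces $\mathfrak{dd}_{(0,1),[0,1]} \geq \aleph_2$.

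The heart of the construction is the choice of $\dot{\mathbb{Q}}_\alpha$. A naive first attempt is Bernoulli-$r$ random forcing: by the strong law of large numbers, $\mu_r(\{x \in 2^\omega : d(\pi[x]) = r\}) = 1$ for any fixed ground-model $\pi$, so the generic is automatically density-indestructible. However random forcing inflates $\cof(\mathcal{N})$, which we must keep at $\aleph_1$. I would therefore replace it by a Sacks/Silver-style perfect tree forcing whose conditions are perfect subtrees of $2^{<\omega}$ all of whose branches code density-$r_\alpha$ subsets of $\omega$, equipped with a fusion scheme that makes it proper and $\omega^\omega$-bounding, preserves outer Lebesgue measure (i.e.\ adds no random real), and nevertheless still forces indestructibility, or at least the weaker fact that $d(\pi[\dot{A}_\alpha])$ is either $r_\alpha$ or undefined ($\osc$); either outcome is acceptable since $\osc \notin [0,1]$. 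The $\omega^\omega$-bounding and null-preserving properties then lift to the $\omega_2$-iteration via the standard preservation theorems for countable support iterations of proper forcings (see~\cite[Chapter 6]{BJ95}), giving $\cof(\mathcal{N}) = \aleph_1$ in the extension.

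The main obstacle is realizing simultaneously, within a single iterand, density-indestructibility and the non-addition of random reals. The natural argument for indestructibility uses positive measure, which is precisely what drives random forcing to add random reals. Engineering a tree forcing whose fusion closes off all ground-model permutations $\pi$ uniformly—forcing $d(\pi[\dot{A}])$ into $\{r_\alpha, \osc\}$ while avoiding the addition of a random real—is the delicate technical step, likely demanding a careful interlocking of the tree structure with the Bernoulli measure so that indestructibility is realized via fusion rather than via a measure-one set argument. The remaining ingredients (preservation theorems, $\aleph_2$-cc bookkeeping, deducing $\cof(\mathcal{N}) = \aleph_1$) are standard once the iterand is in hand.
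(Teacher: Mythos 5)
Your high-level strategy is the right one, and you have correctly diagnosed the tension (random forcing gives easy density-indestructibility but wrecks $\cof(\mathcal{N})$; a tree forcing keeps $\cof(\mathcal{N})$ small but then the density argument is non-trivial). However, your proposal does not actually close the gap you identify: you conclude by saying that engineering a perfect-tree forcing whose fusion simultaneously kills ground-model permutations and avoids adding a random real is ``the delicate technical step,'' and you leave it there. That is precisely the content of the theorem, so this is a genuine gap, not a deferred routine verification.

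The paper's actual proof shows you do not need to design a bespoke forcing at all. It works in the standard Silver model (countable support iteration of plain Silver forcing $\mathbb{S}$ over CH), where $\cof(\mathcal{N})=\aleph_1$ is already known via the Sacks property, so the preservation side is free. The new work is entirely in the choice of \emph{name}: fix an interval partition $(I_n)$ with $|I_n|=2^n$, let $s_n$ be the alternating $0$-$1$ pattern on $I_n$ and $\bar s_n$ its complement, and define the derived set $\dot a = \bigcup_n \dot a_n$ where $\dot a_n$ is $s_n$ or $\bar s_n$ according to whether the Silver generic $\dot x$ has an even or odd number of $1$'s below $n$. Then $d(\dot a)=\tfrac{1}{2}$ always, and the crucial mechanism is that flipping a single undecided coordinate $n_0$ of a Silver condition flips the parity at every $n>n_0$, hence complements every block $\dot a_n$ for $n>n_0$. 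This is a purely combinatorial symmetry, not a measure-theoretic one, and it yields: whenever a condition forces $\frac{|\pi[\dot a]\cap k_0|}{k_0}\geq\tfrac12+r-\varepsilon$ for some ground-model $\pi$, the flipped condition forces $\frac{|\pi[\dot a]\cap k_0|}{k_0}\leq\tfrac12-r+\varepsilon$. Consequently $f\Vdash\overline d(\pi[\dot a])\geq\tfrac12+r$ implies $f\Vdash\underline d(\pi[\dot a])\leq\tfrac12-r$, so $d(\pi[\dot a])\in\{\tfrac12,\osc\}$ for every ground-model $\pi$, which is exactly the weaker form you correctly noted would suffice. So the insight you are missing is that the density constraint should be built into the \emph{name} (via parity and complementary alternating patterns), not into the forcing, after which ordinary Silver forcing and its known preservation theory do the rest; no fusion scheme, bookkeeping argument, or measure-preservation lemma specific to the problem is required.
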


Note that this establishes in particular that $\mathfrak{dd}_{ (0,1) , [0,1] } > \max \{ \bb, \cov (\N), \cov (\M) \}$ is consistent (this max is a lower bound of $\mathfrak{dd}_{ (0,1) , [0,1] }$
by item (4) of Corollary~\ref{dd-summary}). 


\begin{proof}
    This holds in the Silver model, that is, the model obtained by the $\omega_2$-stage countable support iteration of Silver forcing $\mathbb{S}$. 
    $\cof (\mathcal N) = \aleph_1$ is well-known (this is so because
    the countable support product of Silver forcing has the Sacks property~\cite[Lemma 24.2 and Related Result 132]{Ha17}, and the Sacks property implies that bases of the
    null ideal are preserved~\cite[Lemma 6.3.39]{BJ95}). We show $\mathfrak{dd}_{ \{ {1\over 2} \} , [0,1] } = \aleph_2 = \cc$. 

    Let $(I_n : n\in\omega)$ be an interval partition of $\omega$ such that $|I_n| = 2^n$. Let $s_n \in 2^{I_n}$ be an alternating sequence of zeros and ones, i.e., $$s_n ( \min (I_n)) = 0, s_n (\min (I_n) + 1) = 1 , s_n (\min (I_n) + 2) = 0, ...$$ Let $\bar s_n = 1 - s_n$.

    Recall that Silver forcing $\mathbb{S}$ consists of functions $f : \omega \to 2 $ with infinite codomain. The order is given by $g \leq f $ if $g \supseteq f$. If $G$ is $\mathbb{S}$-generic over $V$, $x = \bigcup \{ f \in \mathbb{S} : f \in G \}$ is the generic Silver real. Fix $n \in \omega$. If $| ( x \restriction n)^{-1} ( \{ 1 \} ) |$ is even let $a_n = s_n$, if it is odd let $a_n = \bar s_n$. Put $a = \bigcup_n a_n \in 2^\omega$ and think of $a$ as a subset of $\omega$. By construction $d(a) = {1\over 2}$. Let $\dot a$ be an $\mathbb{S}$-name for $a$.

    Let $f \in \mathbb{S}$ and let $\pi$ be a permutation in $V$. Suppose that for some $r > 0$, $f \forces \overline{d} ( \pi [ \dot a ]) \geq {1 \over 2} + r$. We show that $f \forces \underline{d} ( \pi [\dot a]) \leq {1\over 2} - r$. In particular, the ground model permutations are not a witness for $\mathfrak{dd}_{ \{ {1\over 2} \} , [0,1] }$, and  $\mathfrak{dd}_{ \{ {1\over 2} \} , [0,1] } = \aleph_2$ in the Silver model follows.

    Let $g \leq f$ and $\varepsilon > 0$ be arbitrary. Let $n_0 = \min ( \omega \setminus \mathrm{dom} (g))$. Let $k_0$ be so large that $\pi [ \bigcup_{i \leq n_0} I_i] \subseteq k_0$ and $\pi^{-1} [ \bigcup_{i \leq n_0} I_i] \subseteq k_0$ and such that for some $g' \leq g $, $$g' \forces { | \pi [ \dot a] \cap k_0 | \over k_0} \geq {1\over 2} + r - \varepsilon$$ Let $b_0 = \pi^{-1} [k_0] \cap (\omega \setminus k_0)$ and $b_1 = \pi^{-1} [ \omega \setminus k_0] \cap k_0$. Clearly $|b_0| = |b_1|$ and $\pi [b_0] \subseteq k_0 \setminus \bigcup_{i \leq n_0} I_i$ and $b_1 \subset k_0 \setminus \bigcup_{i \leq n_0} I_i$. Let $n_1$ be such that $b_0 , \pi[b_1] \subseteq \bigcup_{i \leq n_1} I_i$. We may assume $n_1 \subseteq \mathrm{dom} (g')$. This means that $g'$ decides $\bigcup_{i \leq n_1} \dot a_i = \dot a \restriction \bigcup_{i \leq n_1} I_i$. Now let $g''$ be such that $g'' \leq g$, $\mathrm{dom} (g'') = \mathrm{dom} (g')$, $g'' (n_0) = 1 - g' (n_0)$, and $g'' (k) = g' (k) $ for $k \in \mathrm{dom} (g') \setminus \{ n_0 \}$. Letting $c_0$ and $c_1$ be the values forced to $\dot a \restriction b_0$ and $\dot a \restriction b_1$ by $g'$, respectively, we see immediately that $g'' \forces ``\dot a \restriction b_0 = 1 - c_0$ and $\dot a \restriction b_1 = 1 - c_1$". Since the trivial condition forces $\pi [\dot a] \cap k_0 = [ ( \dot a \cap k_0) \cup \dot a \restriction b_0] \setminus \dot a \restriction b_1$, we see that $$g'' \forces { | \pi [\dot a] \cap k_0 | \over k_0} \leq {1\over 2} - r + \varepsilon$$ as required.
\end{proof}

Presumably a modification of this argument shows that $\mathfrak{dd}_{ \{ r \} , [0,1] } = \cc$ holds for any $r$ and not just ${1 \over 2}$ in the Silver model. 

We can simultaneously separate four of the density numbers:

\begin{theorem}    \label{four-density}
It is consistent that $\aleph_1 = \ddd_{ (0,1) , \all} < \ddd = \non 
(\M) < \cov (\M) = \ddd_{ (0,1) , [0,1] } = \rr < \ddd_{ [0,1] , [0,1] } = \cc$.
\end{theorem}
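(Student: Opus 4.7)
The plan is to construct a single forcing extension simultaneously realising all four strict inequalities, by combining the $\sigma$-centered iteration technique of Theorem~\ref{density-lessthanb} with a matrix iteration separating $\non(\M)$ from $\cov(\M)$ and $\cc$. Fix regular cardinals $\aleph_1 < \kappa_1 < \kappa_2 < \kappa_3$ with $\kappa_i^{\aleph_0} = \kappa_i$, starting from a model of GCH. The target extension $V[G]$ should satisfy $\non(\M) = \kappa_1$, $\cov(\M) = \rr = \kappa_2$, and $\cc = \kappa_3$, while being obtained by a finite support iteration of $\sigma$-centered forcings (so that the disruption framework of Lemmas~\ref{density-lessthanb-2} and~\ref{density-lessthanb-3} applies in its iterated form).

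Once such a model exists, the four density-cardinal equalities are quick to verify. First, $\ddd_{(0,1),\all} = \aleph_1$ follows because the ground model permutations disrupt every new $(j,k)$-set and thus every new set of density in $(0,1)$, exactly as in the proof of Theorem~\ref{density-lessthanb}. Next, $\ddd = \non(\M) = \kappa_1$ is automatic by Corollary~\ref{density-nonmeager-cor}. The value $\ddd_{(0,1),[0,1]}$ is squeezed to $\kappa_2$ by the ZFC sandwich $\cov(\M) \leq \ddd_{(0,1),[0,1]} \leq \rr$ of Corollary~\ref{dd-summary}(4). Finally, $\ddd_{[0,1],[0,1]} = \cc = \kappa_3$ by Corollary~\ref{density-continuum-cor}. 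The four strict inequalities then follow from $\aleph_1 < \kappa_1 < \kappa_2 < \kappa_3$.

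Producing the required model is the substantive work. The approach I would take is a two-dimensional matrix iteration in the spirit of Blass--Shelah (and its refinements due to Brendle--Fischer and Mej\'{\i}a) with $\kappa_2$ horizontal coordinates and an internal vertical iteration of length $\kappa_1$ at each coordinate. Along the vertical direction one iterates the eventually different real forcing (which is $\sigma$-centered and whose $\dd(\omom,\Phi_h,\in^\infty)$-style characterisation forces $\non(\M) \geq \kappa_1$), while horizontal transitions add Cohen reals to drive $\cov(\M)$ and $\rr$ up to $\kappa_2$; the matrix preservation theorems ensure that $\non(\M)$ is not overshot beyond $\kappa_1$. All iterands remain $\sigma$-centered, so Lemma~\ref{density-lessthanb-3} applies verbatim in this two-dimensional setting (possibly requiring a straightforward matrix generalisation).

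The main obstacle is the separation $\cc > \cov(\M)$, i.e.\ getting $\cc = \kappa_3$ without pushing $\cov(\M)$ past $\kappa_2$: any naive finite support $\sigma$-centered iteration of length $\kappa_3$ adds Cohen reals cofinally and thus forces $\cov(\M) = \cc = \kappa_3$. One therefore needs to append to the matrix iteration a further $\sigma$-centered product-style construction that adds $\kappa_3$ reals without introducing new Cohen reals and without adding new unreapable sets; verifying that such an extension preserves $\non(\M)$, $\cov(\M)$, and $\rr$ (and remains amenable to Lemma~\ref{density-lessthanb-2}) will be where essentially all the technical effort goes, drawing on standard preservation theorems for $\sigma$-centered matrix iterations.
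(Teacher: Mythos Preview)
Your high-level plan is right and matches the paper: a matrix iteration with $\sigma$-centered iterands, so that the disruption machinery of Lemmas~\ref{density-lessthanb-2} and~\ref{density-lessthanb-3} keeps $\ddd_{(0,1),\all} = \aleph_1$, followed by the ZFC squeezes of Corollary~\ref{dd-summary} to pin down the other three density cardinals. But your choice of iterands leaves a genuine gap. Eventually different forcing and Cohen forcing give $\non(\M) \geq \kappa_1$ and $\cov(\M) \geq \kappa_2$, but nothing in your construction produces an unreaped family of size $\kappa_2$, so you have no upper bound on $\rr$; the statement requires $\rr = \cov(\M)$, and without $\rr \leq \kappa_2$ the squeeze $\cov(\M) \leq \ddd_{(0,1),[0,1]} \leq \rr$ does not even pin $\ddd_{(0,1),[0,1]}$ to $\kappa_2$. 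The paper instead uses Mathias forcing $\MM_{\dot\U_\gamma}$ with ultrafilters $\dot\U_\gamma$ living in models along the vertical direction of the matrix: the Mathias generics $\{m_\gamma : \gamma < \lambda\}$ form an unreaped family of size $\lambda$, forcing $\rr \leq \lambda$, and then $\lambda \leq \cov(\M) \leq \ddd_{(0,1),[0,1]} \leq \rr \leq \lambda$ pins all three to $\lambda$. The same matrix bookkeeping gives $\sss \geq \kappa$ and hence $\non(\M) \geq \kappa$ via $\sss \leq \non(\M)$, so eventually different forcing is not needed at all; the bound $\non(\M) \leq \kappa$ comes from showing that the $\kappa$ Cohen reals added in the first column remain non-meager in the final model.

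Your perceived main obstacle is not one. The paper simply adds $\mu$ Cohen reals \emph{before} the matrix iteration: this sets $\cc = \mu$, the subsequent ccc iteration of size $<\mu$ preserves it, and $\cov(\M)$ ends up at $\lambda < \mu$ precisely because $\cov(\M) \leq \rr \leq \lambda$ as above. Your proposed fix --- a $\sigma$-centered product-style step of length $\kappa_3$ that adds no Cohen reals --- is both unnecessary and likely unworkable, since any finite support iteration or product of nontrivial posets adds Cohen reals at limit stages of countable cofinality, and these would be cofinal in $\kappa_3$.
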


\begin{proof}[Proof sketch] This is a standard argument, which we
will only sketch. Assume for simplicity GCH. Let $\aleph_1 < \kappa <
\lambda < \mu$ be cardinals with $\kappa$ and $\lambda$ being regular and
$\mu$ of uncountable cofinality. Let $f : \lambda \to \kappa$ be
a function such that $f^{-1} ( \{ \alpha \} ) $ is cofinal in
$\lambda$ for each $\alpha < \kappa$. First add $\mu$ Cohen reals to
force $\cc = \mu$.

Then perform a {\em matrix iteration}, that is, a two-dimensional
system of finite support iterations $\langle \PP_{\alpha,\gamma}, \dot \QQ_{\alpha,\gamma} : \gamma < \lambda \rangle$, $\alpha < \kappa$,
such that 
\begin{enumerate}
    \item $\PP_{\alpha,0}$ is the forcing adding $\alpha$ Cohen
    reals $(c_\beta : \beta < \alpha)$,
    \item $\PP_{\alpha , \gamma + 1} $ is the two-step iteration 
    $\PP_{\alpha, \gamma} \star \dot \QQ_{\alpha,\gamma} $, and if
    $\alpha < f(\gamma) $, then $\dot \QQ_{\alpha,\gamma}$ is a
    $\PP_{\alpha,\gamma}$-name for the trivial forcing, while if
    $\alpha \geq f(\gamma)$, then $\dot \QQ_{\alpha,\gamma}$ is a
    $\PP_{\alpha,\gamma}$-name for Mathias forcing $\MM_{\dot \U_\gamma}$
    where $\dot \U_\gamma$ is a $\PP_{f(\gamma), \gamma}$-name 
    for an ultrafilter in $V_{f(\gamma),\gamma}$,
    \item if $\gamma$ is a limit ordinal, then $\PP_{\alpha,\gamma}$
    is the direct limit of the $\PP_{\alpha,\delta}$, $\delta<\gamma$,
    \item $\PP_{\alpha,\gamma} \embed \PP_{\beta,\delta}$ for
    $\alpha \leq \beta$ and $\gamma\leq\delta$.
\end{enumerate}
As usual $V_{\alpha,\gamma}$ denotes the intermediate extension via $\PP_{\alpha,\gamma}$.
It is clear that this construction can be carried out. For
item (4) note that $\PP_{\alpha,\gamma} \embed \PP_{\alpha,\delta}$
is trivial because we are dealing with a standard iteration
while $\PP_{\alpha,\delta} \embed \PP_{\beta,\delta}$ is proved by
induction on $\delta < \lambda$. The basic step is obvious by (1), and
so is the successor step by (2). For the limit step use e.g.~\cite[Lemma 10]{BF11}.

We need to see that the $\PP_{\kappa,\lambda}$-generic extension 
$V_{\kappa,\lambda}$ is the required model.
Clearly $\ddd_{[0,1],[0,1]} = \cc = \mu$ is preserved
(see Corollary~\ref{dd-summary} (2)), and $\ddd_{(0,1), \all}=\aleph_1$
follows from the fact that all iterands are $\sigma$-centered and 
the techniques of the proof of Theorem~\ref{density-lessthanb}
(in particular Lemmas~\ref{density-lessthanb-2} and~\ref{density-lessthanb-3}).

Next, since $\langle \PP_{\kappa,\gamma} : \gamma < \lambda \rangle$
is a finite support iteration of length $\lambda$, $\cov (\M) 
\geq \lambda$ follows. On the other hand, the Cohen reals 
$\{ c_\alpha : \alpha < \kappa \}$ form a non-meager set of size $\kappa$
in the final extension so that $\non (\M) \leq \kappa$ holds.

To see the latter, it suffices to prove by induction on $\gamma \leq\lambda$
that \[\PP_{\alpha,\gamma} \times \CC_\alpha \embed \PP_{\alpha +1,
\gamma} \;\;\; (\star)\] for all $\alpha < \kappa$, where $\CC_\alpha$ adds the
$\alpha$-th Cohen real $c_\alpha$ in the initial step, that is,
$\PP_{\alpha + 1, 0} = \PP_{\alpha, 0} \times \CC_\alpha$. In particular,
for $\gamma = 0$, $(\star)$ is obvious. Next assume $\gamma =
\delta + 1$ is successor. If $\alpha < f (\delta)$, then 
$\PP_{\alpha,\gamma} = \PP_{\alpha,\delta}$ and $(\star)$ is 
immediate by induction hypothesis. If $\alpha \geq f(\delta)$,
then $\PP_{\beta, \gamma} = \PP_{\beta,\delta} \star \MM_{\dot
\U_\delta}$ for $\beta \in \{\alpha, \alpha + 1 \}$. Therefore
by induction hypothesis and the product lemma, we obtain 
\[ \PP_{\alpha,\gamma} \times \CC_\alpha = (\PP_{\alpha,\delta} \star \MM_{\dot
\U_\delta} )\times \CC_\alpha = (\PP_{\alpha,\delta} \times
\CC_\alpha) \star \MM_{\dot \U_\delta} \embed 
\PP_{\alpha + 1,\delta} \star \MM_{\dot \U_\delta} = 
\PP_{\alpha + 1,\gamma} \]
as required. Finally, if $\gamma$ is a limit ordinal,  a slight
modification of the argument of~\cite[Lemma 10]{BF11} works:
let $\{ (p_n,c_n) : n \in \omega \}$ be a maximal antichain in
$\PP_{\alpha, \gamma} \times \CC_\alpha$, and let $p$ be a condition
in $\PP_{\alpha + 1, \gamma}$. There is $\delta < \gamma$ such that
$p \in \PP_{\alpha + 1, \delta}$. The projection $\{ (p_n \re \delta,
c_n) : n \in \omega \}$ of the antichain to $\PP_{\alpha, \delta} \times
\CC_\alpha$ is a predense set. Therefore, by induction hypothesis,
there is $n$ such that $p$ and $(p_n \re \delta, c_n)$ are compatible
with common extension $q \in \PP_{\alpha + 1, \delta}$. It is
now obvious that $q$ is also compatible with $(p_n,c_n)$ in 
$\PP_{\alpha + 1,\gamma}$, and $(\star)$ follows.

For each $\gamma < \lambda$  let $m_\gamma$ be the Mathias-generic
added by $\MM_{\U_\gamma}$ over $V_{f(\gamma) , \gamma}$, and
note that $\{ m_\gamma : \gamma < \lambda \}$ is an unreaped family so 
that $\rr \leq \lambda$ follows. $\cov (\M) = \dd_{ (0,1), [0,1] } =
\rr = \lambda$ is now immediate by Corollary~\ref{dd-summary} (4).
Finally, if $A \sub [\omega]^\omega$ has size less than $\kappa$,
then there are $\gamma < \lambda$ and $\alpha < \kappa$ such that
$A \in V_{\alpha, \gamma}$. Choosing $\delta \geq \gamma$ such that
$f(\delta ) \geq \alpha$ we see that the Mathis generic $m_\delta$
is not split by any member of $A$. Thus $\sss \geq \kappa.$
Since $\sss \leq \non (\M)$ holds in ZFC, $\sss = \non (\M) =
\kappa$ follows, and the proof is complete.
\end{proof}



\section{Questions and final remarks}


As mentioned after Theorem~\ref{density-covnull}, one of the questions we could not answer is the following:

\begin{question}   \label{osc-covnull}
Is $\ddd_{ \{ \osc \} , \all} \geq \cov (\N)$?
\end{question}

This is really a problem about the random model: if random forcing adds a set without density such that for all ground model
permutations, the permuted set still has no density, then the answer is yes. If random forcing does not add such a set,
$\ddd_{ \{ \osc \} , \all} < \cov (\N)$ is consistent (and holds in the random model).

We know by item (3) of Corollary~\ref{dd-summary} that $\cov (\N) \leq \ddd_{ (0,1), \all } \leq \min \{ \rr, \non (\M) \}$ and by
Theorem~\ref{density-lessthanb} the second inequality can be consistently strict. However, we do not know whether 
one can separate $\ddd_{ (0,1), \all } $ from $\cov (\mathcal N)$.

\begin{question}   \label{zeroone-covnull}
Is $\cov (\mathcal N) < \ddd_{ (0,1), \all}$ consistent? Or are the cardinals equal?
\end{question}

Similarly, by (4) of Corollary~\ref{dd-summary}, we know $\max \{ \bb, \cov (\N), \cov (\M) \} \leq \ddd_{ (0,1), [0,1]} \leq \rr$ and by Theorem~\ref{density-largerthancofnull}
the first inequality is consistently strict, but we do not know the answer to the following:

\begin{question} \label{zeroone-reaping}
Is $\ddd_{ (0,1) , [0,1] } < \rr$ consistent? Or are the cardinals equal?
\end{question}

Note that by Theorem~\ref{density-nonmeager2}, $\ddd_{ \{ 0 \} , \all } = \ddd_{ \{ 1 \} , \all } = \non (\M)$. Also, it is easy to see that
$\ddd_{ \{ r \} , \all } = \ddd_{ \{ 1-r \} , \all } $ for any $r \in (0,1)$ (and similarly with $\all$ replaced by other natural choices like $[0,1]$, $ \{ 0,1 \}$, or $(0,1)$),
though these cardinals may be strictly smaller than $\non (\M)$. We do not know:

\begin{conjecture}
$\ddd_{ \{ r \} , \all }  = \ddd_{ \{ { 1\over 2}  \} , \all } $ for all $r \in (0,1)$ (and similarly with $\all$ replaced by $[0,1]$, $ \{ 0,1 \}$, or $(0,1)$).
\end{conjecture}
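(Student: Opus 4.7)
The plan is to try to establish the conjecture via a Tukey equivalence of relational systems. Setting $\A_{r,Y} := (D_{\{r\}}, \Sym (\omega), R_Y)$, the goal is to show $\A_{r, Y} \equiv_T \A_{1/2, Y}$ for all $r \in (0,1)$ and all four choices of $Y$, which will immediately yield the equality of cardinals (and of their duals). A first easy observation streamlines things: the complementation map $A \mapsto \omega \sem A$ is a bijection $D_{\{r\}} \to D_{\{1-r\}}$ satisfying $\pi [ \omega \sem A] = \omega \sem \pi [A]$, so $d(\pi [A]) = y$ iff $d(\pi [\omega \sem A]) = 1-y$. Since the four sets $\all, [0,1], \{ 0,1\}, (0,1)$ are all invariant under $y \mapsto 1-y$ (with $\osc \mapsto \osc$), this yields $\A_{r,Y} \equiv_T \A_{1-r,Y}$. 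Hence we may assume $r \leq 1/2$, and it remains to show both $\A_{r, Y} \leq_T \A_{1/2,Y}$ and $\A_{1/2, Y} \leq_T \A_{r,Y}$.

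For $\A_{1/2, Y} \leq_T \A_{r, Y}$, the natural attempt is a block construction: fix a rapidly growing interval partition $(I_n)_n$ and, given $B \in D_{\{1/2\}}$, define $\varphi_- (B) \cap I_n$ to be the first $\lfloor r|I_n| \rfloor$ elements of $B \cap I_n$ (using $r \leq 1/2 \leq s$, where $s = 1/2$). Then $d(\varphi_- (B)) = r$ for every $B \in D_{\{1/2\}}$. The simplest choice for $\varphi_+$ would be the identity on $\Sym (\omega)$, and one would try to show that if $d (\pi [ \varphi_- (B) ]) \neq r$ then $d (\pi [B]) \neq 1/2$. For the reverse direction $\A_{r,Y} \leq_T \A_{1/2,Y}$ one could take the dual approach: given $A \in D_{\{r\}}$, enlarge $A$ within each $I_n$ by adding $\lfloor (1/2 - r) |I_n| \rfloor$ canonically chosen elements of $I_n \sem A$ (say, the first such elements) to obtain $\varphi_- (A) \in D_{\{1/2\}}$, and hope that disruption of $\varphi_- (A)$ implies disruption of $A$.

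The main obstacle is that these identity-translation constructions are manifestly not robust: a permutation $\pi$ can exploit the local block structure of $\varphi_-$ in ways that have no global effect on $B$ (respectively, on $A$). For instance, in the first construction, a $\pi$ fixing the "first elements" of each $B \cap I_n$ in a nontrivial pattern but permuting the remaining elements of $B$ freely can easily be arranged to disrupt $\varphi_- (B)$ while preserving $d (\pi [B]) = 1/2$. Similar pathologies defeat the second construction. A workable Tukey reduction therefore seems to require either a genuinely non-local definition of $\varphi_-$ (one that is, in some sense, permutation-equivariant with respect to the class of permutations approximately preserving density $r$), or a non-trivial $\varphi_+$ that extends $\pi$ to a carefully modified permutation before applying it to $B$. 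I do not currently see how to do either; this is precisely why the statement is offered as a conjecture.

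A fallback is to attempt to pin down $\ddd_{\{r\}, Y}$ exactly in terms of classical cardinals (the bounds in Theorems~\ref{density-covnull}, \ref{density-unbounding}, \ref{density-covmeager}, \ref{density-reaping} are all uniform in $r \in (0,1)$, which is consistent with — but does not force — $r$-independence), but this subsumes the presently open Questions~\ref{zeroone-covnull} and~\ref{zeroone-reaping}. A weaker partial result which may be more tractable is to verify, in specific forcing models where these cardinals can be computed, that $\ddd_{\{r\}, Y}$ takes the same value for all $r \in (0,1)$; for example, verifying $\ddd_{\{r\}, \all} = \aleph_1$ for all $r \in (0,1)$ in the $\sigma$-centered iteration of Theorem~\ref{density-lessthanb} (whose proof is already uniform in $r$) supports the conjecture in that model, and analogous model-by-model verifications may clarify what a general Tukey-level reduction would have to look like.
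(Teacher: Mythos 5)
This statement is labeled a \emph{conjecture} in the paper, and the paper contains no proof of it; it appears among the open problems of the final section. Your proposal is therefore not being measured against a hidden argument. What you do establish — the Tukey equivalence $\A_{r,Y} \equiv_T \A_{1-r,Y}$ via complementation, allowing one to assume $r \leq 1/2$ — is correct, and is in fact exactly the observation the authors make in the sentence immediately preceding the conjecture (``it is easy to see that $\ddd_{\{r\},\all} = \ddd_{\{1-r\},\all}$ for any $r \in (0,1)$''). Beyond that, your diagnosis of why the naive block-by-block $\varphi_-$ fails is sound: a permutation is free to rearrange the ``first $\lfloor r|I_n|\rfloor$ elements of $B\cap I_n$'' in a way that destroys the density of the thinned set while the compensating action on the rest of $B\cap I_n$ keeps $d(\pi[B]) = \tfrac12$, so the identity on $\Sym(\omega)$ cannot serve as $\varphi_+$. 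Your fallback suggestions (pinning down exact values, or model-by-model verification starting with the $\sigma$-centered iteration of Theorem~\ref{density-lessthanb}) are consistent with what the paper actually proves, and you are right that the known ZFC bounds from Section~3 are uniform in $r$ and hence cannot distinguish the cardinals. In short: there is no gap in your reasoning to point out, only the open problem itself; your proposal correctly stops where it must.
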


A more sweeping conjecture would be:

\begin{conjecture}
Assume $X , X' \sub \all$ are such that $X \sem (0,1) = X' \sem (0,1)$, and let $Y$ be arbitrary. Then   $\ddd_{X,Y}  = \ddd_{ X' , Y} $.
\end{conjecture}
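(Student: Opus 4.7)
The plan is to establish the Tukey equivalence $(D_X, \Sym(\omega), R_Y) \equiv_T (D_{X'}, \Sym(\omega), R_Y)$. The inclusion $X \sub X'$ gives $D_X \sub D_{X'}$, so any witness for $\ddd_{X', Y}$ automatically witnesses $\ddd_{X, Y}$ and hence $\ddd_{X, Y} \leq \ddd_{X', Y}$ is free. We may thus assume WLOG that $X \sub X'$ with $X' \sem X \sub (0,1)$ and focus on the reverse reduction. A routine one-element-at-a-time induction further reduces us to the case $X' = X \cup \{r\}$ for a single $r \in (0,1) \sem X$: given a witness $\Pi$ for $\ddd_{X,Y}$, we aim to build $\Pi' \supseteq \Pi$ of the same cardinality that also disturbs every infinite-coinfinite set of density exactly $r$.

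The concrete aim is then to design maps $\varphi_- : D_{\{r\}} \to D_X$ and $\varphi_+ : \Sym(\omega) \to \Sym(\omega)$ (possibly into a countable union of copies of $\Sym(\omega)$, enlarging $\Pi$ by a factor of $\aleph_0$) such that whenever $\pi$ disturbs $\varphi_-(A)$ with $d(\pi[\varphi_-(A)]) \in Y$, some associated $\varphi_+(\pi)$ disturbs $A$ with $d(\varphi_+(\pi)[A]) \in Y \sem \{r\}$. Natural candidates for $\varphi_-$ depend on the shape of $X$: if $X \cap (0,1) \neq \emptyset$, fix $s \in X \cap (0,1)$ and, on blocks of exponentially growing lengths, alternate a compressed copy of $A$ with a reference set whose relative density is tuned so that $\varphi_-(A)$ has total density $s$, adapting the block combinatorics from the proof of Theorem~\ref{density-covmeager}; if $\{0,1\} \cap X \neq \emptyset$, thin $A$ to a sparse (or fill its complement to a very full) encoding on fast-growing blocks; and if $\osc \in X$, interleave $A$ with its complement on blocks of rapidly varying lengths to produce a canonically oscillating encoding.

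The main obstacle is the lack of equivariance between the encoding $\varphi_-$ and the action of $\Sym(\omega)$. A permutation $\pi$ can freely mix the $A$-coded portion of $\varphi_-(A)$ with the reference/padding portion, so knowing that $\pi$ disturbs $\varphi_-(A)$'s density only controls a combined average and does not cleanly factor through any disturbance of $A$ itself. A partial remedy exploits Lemma~\ref{osc-lemma}: arrange $\varphi_-$ so that disturbances split into two kinds (modifications of the $A$-coded blocks versus modifications of the padding blocks), and enlarge $\Pi'$ by block-respecting variants of each $\pi \in \Pi$ to handle the first kind. But controlling both kinds simultaneously seems to demand a substantially more intricate combinatorial construction than anything in Sections 2 and 3.

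If the direct Tukey approach stalls, a complementary strategy is to pin down $\ddd_{X, Y}$ and $\ddd_{X', Y}$ as the same specific ZFC cardinal via refined upper and lower bounds, exploiting the Bartoszy\'nski-style characterization of Theorem~\ref{bartoszynski-characterization} and the framework of Section 3. Given the difficulty and the unresolved status of the closely related Questions~\ref{zeroone-covnull} and~\ref{zeroone-reaping}, one should not rule out the possibility that the conjecture is consistently false; a parallel forcing attack in the spirit of Theorem~\ref{density-largerthancofnull}, aiming to separate $\ddd_{\{r\}, Y}$ from $\ddd_{\{s\}, Y}$ for distinct $r, s \in (0,1)$, would be a worthwhile companion investigation.
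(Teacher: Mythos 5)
The statement you were asked to prove is presented in the paper as an open \emph{conjecture}: the authors provide no proof and merely speculate that $\ddd_{X,Y}$ should be completely independent of $X \cap (0,1)$, offering only circumstantial evidence (the easy symmetry $\ddd_{\{r\},Y} = \ddd_{\{1-r\},Y}$, and the fact that all of the ZFC bounds in Section~3 fail to distinguish different subsets of $(0,1)$). Your proposal correctly identifies the free inclusion $\ddd_{X,Y} \le \ddd_{X',Y}$ when $X \sub X'$, surveys plausible Tukey-style attacks for the reverse, and honestly concludes that none of them closes the gap. That assessment is exactly right, and declining to claim a proof is the correct outcome here; there is no proof in the paper for you to be compared against.

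One cautionary note on a step you call ``routine'': reducing to $X' = X \cup \{r\}$ by adding one point of $(0,1)$ at a time is not harmless, since $X' \sem X$ can have cardinality up to $\cc$, and enlarging the witness family by a factor of $\aleph_0$ at each of uncountably many stages destroys all cardinality control. What one would actually need is a single Tukey reduction $(D_{X'}, \Sym(\omega), R_Y) \le_T (D_X, \Sym(\omega), R_Y)$ handling every $r \in X' \sem X$ uniformly. Beyond that, the obstacle you single out---that the encoding $\varphi_-$ is not equivariant under the $\Sym(\omega)$-action, so a density disturbance of $\varphi_-(A)$ need not factor through any disturbance of $A$---is indeed the crux of why this remains open, and your suggestion of a forcing attack aiming to separate $\ddd_{\{r\},Y}$ from $\ddd_{\{s\},Y}$ is a reasonable complementary direction that the paper likewise leaves unexplored.
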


This would mean $\ddd_{X,Y}$ is completely independent of the intersection $X \cap (0,1)$. It probably depends on $Y \cap (0,1)$, at least on
its size, though we have not pursued  this (see Theorem~\ref{cof} for a related result).

A  related  question is how many of the density numbers can be (consistently) simultaneously distinct. We do have models with four values (Theorem~\ref{four-density}).

\begin{question}
Can five or more density numbers of the form $\ddd_{X,Y}$ be simultaneously distinct? Can infinitely many be simultaneously distinct?
\end{question}



As for the density number, there are several natural variants of the rearrangement number, and some of them have been considered in the literature~\cite{BBBHHL20}.
Let us introduce a general framework similar to Definition~\ref{general-density-number}. We use again the symbol $\osc$, and write $\sum_n a_n = \osc$ if the series
$\sum_n a_n$ diverges by oscillation. Let $\all = \RR \cup \{ + \infty, - \infty, \osc \}$.

\begin{definition}   \label{general-rearrangement-number}
Assume $X, Y \sub \all$ are such that $X \neq Y$ and for all $x \in X$ there is $y \in Y$ with $y \neq x$. The {\em $(X,Y)$-rearrangement number} $\rrr_{X,Y}$ is the
smallest cardinality of a family $\Pi \sub \Sym (\omega)$ such that for every p.c.c. series $\sum_n a_n$ with $\sum_n a_n \in X$ there is $\pi \in \Pi$ such that
$\sum_n a_{\pi (n) } \in Y$ and $\sum_n a_{\pi (n)} \neq \sum_n a_n$.
\end{definition}

Again, $X' \sub X$ and $Y'  \supseteq Y$ obviously imply $\rrr_{X' , Y'} \leq \rrr_{X,Y}$. Also $\rrr = \rrr_{\RR,\all}$ and $\rrr ' =\rrr_{\RR \cup \{ \pm \infty \}, \all } ( = \non (\M))$, where $\all = \RR \cup\{\pm \infty\}\cup \{\osc\}$.
In~\cite{BBBHHL20}, the two cardinals $\rrr_i = \rrr_{ \RR , \{ \pm \infty \} }$ and $\rrr_f = \rrr_{\RR,\RR}$ were thoroughly investigated, and it was proved that 
both are above the dominating number $\dd$ and consistently strictly below the continuum $\cc$~\cite[Theorems 12, 29, and 38]{BBBHHL20}. With respect to the analogy of the table in Section 2,
$\rrr$ would correspond to $\ddd_{ (0,1), \all}$, $\rrr_i$ to $\ddd_{ (0,1), \{ 0,1 \} } $, and $\rrr_f$ to $\ddd_{(0,1), (0,1)}$, respectively. However,
$\rrr$ is above $\bb$ while $\ddd_{ (0,1), \all}$ is consistently below $\bb$ (Theorem~\ref{density-lessthanb}); also, as mentioned above, $\rrr_i$ and $\rrr_f$ are above $\dd$ while
$\ddd_{ (0,1), \{ 0,1 \} } $ and $\ddd_{(0,1), (0,1)}$ are below $\rr$ (Corollary~\ref{dd-summary} (4)) with $\rr < \dd$ being consistent (this holds in the Miller 
and the Blass-Shelah models, see~\cite[11.9]{Bl10} and~\cite{BS87}).
Why does the analogy break down?

The rough answer is that while we can replace a series converging to a value $r$ by a series with the same positive and negative terms in the same order, and
thus converging to the same value $r$, but containing long intervals of zeroes (this is called ``padding with zeroes" in~\cite{BBBHHL20}), there is no
corresponding operation for infinite-coinfinite sets $A$. Padding with zeroes should correspond to introducing elements belonging neither to $A$ nor to its complement.
This suggests we should consider the relative density of $A$ in some larger set $B$, with $\omega \sem B$ playing the role of the set of padded zeroes.

Let $A \sub B \sub \omega$. Define the {\em lower relative density of $A$ in $B$}
\[ \underline{d}_B (A) = \lim\inf_{n \to \infty} { |A \cap n| \over |B \cap n | } = \lim\inf_{n \to \infty} { d_n (A) \over d_n (B) } \]
and the {\em upper relative density of $A$ in $B$}
\[ \overline{d}_B (A) = \lim\sup_{n \to \infty} { |A \cap n| \over |B \cap n | } = \lim\sup_{n \to \infty} { d_n (A) \over d_n (B) } \]
If $\underline{d}_B (A) = \overline{d}_B (A)$, the common value $d_B (A)$ is the {\em relative density of $A$ in $B$}.
As for density, the interesting case is when both $A$ and $B \sem A$ are infinite. As in Section 3, say $d_B (A) = \osc$ if
$\underline{d}_B (A) < \overline{d}_B (A)$, and let $\all = [0,1] \cup \{ \osc \}$.

\begin{definition}
Assume $X,Y \sub \all$ are such that $X \neq \emptyset$ and for all $x \in X$ there is $y \in Y$ with $y \neq x$. The {\em $(X,Y)$-relative density number} 
$\ddd_{X,Y}^{\rel}$ is the smallest cardinality of a family $\Pi \sub \Sym (\omega)$ such that for every $A \sub B \sub \omega$ with $A$ and $B \sem A$
both infinite and with $d_B (A) \in X$ there is $\pi \in \Pi$ such that $d_{\pi [B]} ( \pi [A] ) \in Y$ and $d_{\pi [B]} ( \pi [A] ) \neq d_B (A)$.
\end{definition}

Let us provide the Tukey framework for these cardinals: for $X,Y$ as in the definition, consider triples $(D_X^\rel , \Sym (\omega) , R_Y^\rel)$
where $D_X^\rel$ is the collection of pairs $(A,B)$ such that $A \sub B \sub \omega$ with both $A$ and $B \sem A$ infinite and $d_B (A) \in X$,
and the relation $R_Y^\rel$ is given by $(A,B) R_Y^\rel \pi$ if $d_{\pi [B]} ( \pi [A] ) \in Y$ and $d_{\pi [B]} ( \pi [A] ) \neq d_B (A)$,
for $(A,B) \in D_X^\rel$ and $\pi \in \Sym (\omega)$. Then $\ddd_{X,Y}^\rel = \dd ( D_X^\rel , \Sym (\omega) , R_Y^\rel)$, and we let 
$\ddd_{X,Y}^{\rel, \perp} = \bb( D_X^\rel , \Sym (\omega) , R_Y^\rel)$.

\begin{proposition} \label{reldensity-nonmeager}
\begin{enumerate}
\item $( D_X , \Sym (\omega) , R_Y) \leq_T ( D_X^\rel , \Sym (\omega) , R_Y^\rel)$ for any $X,Y$.
\item $( D_X^\rel , \Sym (\omega) , R_Y^\rel) \leq_T (\M ,\twoom, \not\ni)$ if $\osc \in Y$ and $\osc \notin X$.
\end{enumerate}
\end{proposition}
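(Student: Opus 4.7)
For part (1), I will use the obvious embedding $D_X \hookrightarrow D_X^\rel$ via $A \mapsto (A, \omega)$. Set $\varphi_-(A) = (A, \omega)$ for $A \in D_X$ and let $\varphi_+: \Sym(\omega) \to \Sym(\omega)$ be the identity. Since $d_\omega = d$, the condition $d(A) \in X$ yields $(A, \omega) \in D_X^\rel$. Now if $(A, \omega) R_Y^\rel \pi$, then $d_{\pi[\omega]}(\pi[A]) = d(\pi[A]) \in Y$ and differs from $d_\omega(A) = d(A)$, which is precisely $A R_Y \pi$. This gives the Tukey reduction.

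For part (2), the plan is to generalize the first part of the proof of Theorem~\ref{density-nonmeager} from density to relative density. Working with the meager ideal on $\Sym(\omega)$ (homeomorphic to $\omom$), let $\varphi_+$ be the identity on $\Sym(\omega)$, and for $(A, B) \in D_X^\rel$ let $\varphi_-(A, B) = \bigcup_{k, \ell \in \omega} F_{k, \ell}^{A, B}$, where
\[ F_{k,\ell}^{A,B} = \left\{ \pi \in \Sym(\omega) : \left(\forall n \geq \ell \; \frac{|\pi[A] \cap n|}{|\pi[B] \cap n|} \geq \frac{1}{k} \text{ or } \forall n \geq \ell \; \frac{|\pi[A] \cap n|}{|\pi[B] \cap n|} \leq 1 - \frac{1}{k}\right) \right\}. \]
(Since $B$ is infinite, $|\pi[B] \cap n| > 0$ for all sufficiently large $n$, so the ratios are well defined where they matter.) I then verify two things.

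First, each $A_{k,\ell}^{A,B} := \Sym(\omega) \sem F_{k,\ell}^{A,B}$ is open and dense, so $\varphi_-(A, B) \in \M$. Openness follows exactly as in Theorem~\ref{density-nonmeager}: membership in $A_{k,\ell}^{A,B}$ is witnessed by existence of two integers $n_1, n_2 \geq \ell$ with the ratios on opposite sides of the thresholds, and this depends only on $\pi$ restricted to $\pi^{-1}[\max(n_1,n_2)]$. For density, given $\pi \in \Sym(\omega)$ and $m \in \omega$, I must extend $\pi \re m$ to a full permutation $\pi' \in A_{k,\ell}^{A,B}$. Using that both $A$ and $B \sem A$ are infinite, I construct $\pi'$ in stages: (i) beyond $\pi \re m$, map fresh unused domain elements of $A$ bijectively onto the next large block of free codomain positions, driving $|\pi'[A] \cap n_1|/|\pi'[B] \cap n_1|$ above $1 - 1/k$ at some $n_1 \geq \ell$; (ii) then map fresh unused elements of $B \sem A$ bijectively onto the next block, driving the ratio below $1/k$ at some $n_2 \geq \ell$; (iii) complete to a bijection of $\omega$ by standard back-and-forth. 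Because each block's contribution swamps both numerator and denominator additively, the ratios at $n_1$ and $n_2$ can be pushed arbitrarily close to $1$ and $0$ respectively, placing $\pi'$ in $A_{k,\ell}^{A,B}$.

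Second, suppose $\pi \notin \varphi_-(A, B)$. Then for every $k$ and every $\ell$ the ratio $|\pi[A] \cap n|/|\pi[B] \cap n|$ falls below $1/k$ for some $n \geq \ell$ and rises above $1 - 1/k$ for some $n \geq \ell$. Hence $\underline{d}_{\pi[B]}(\pi[A]) = 0$ and $\overline{d}_{\pi[B]}(\pi[A]) = 1$, so $d_{\pi[B]}(\pi[A]) = \osc$. Since $\osc \in Y$ by hypothesis and $d_B(A) \in X$ with $\osc \notin X$, the value $d_{\pi[B]}(\pi[A]) = \osc$ lies in $Y$ and differs from $d_B(A)$. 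Therefore $(A, B) R_Y^\rel \pi = \varphi_+(\pi)$, completing the Tukey reduction.

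The main obstacle is the density step in verifying that $A_{k,\ell}^{A,B}$ is dense; the analogous step was declared ``clear'' in Theorem~\ref{density-nonmeager}, but here one must simultaneously control both the numerator and the denominator of the ratio. The hypothesis that both $A$ and $B \sem A$ are infinite is exactly what makes the two-stage swamping construction go through.
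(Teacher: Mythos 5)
Your proof is correct and takes the same route as the paper: the paper treats (1) with exactly your $\varphi_-(A)=(A,\omega)$ and identity $\varphi_+$, and dismisses (2) in one line as ``analogous to the first part of the proof of Theorem~\ref{density-nonmeager}'', which is precisely the transcription you carried out (replacing the denominator $n$ by $|\pi[B]\cap n|$). Your two-stage swamping argument for density, plus the observation that the gap positions are bounded by a constant while the blocks can be taken arbitrarily long, is the right way to spell out what the paper leaves implicit.
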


In particular, $\ddd_{X,Y} \leq \ddd_{X,Y}^{\rel}$ always holds.

\begin{corollary}
 Suppose $X,Y$ are such that $\osc\not\in X$, $X\cap\{0,1\}\neq\emptyset$ and $\osc\in Y$. 
 Then $\ddd_{X,Y}^\rel=\non(\M)$.
    
    \end{corollary}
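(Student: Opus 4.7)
The plan is simply to sandwich $\ddd_{X,Y}^\rel$ between $\non(\M)$ and $\non(\M)$, using the two clauses of Proposition~\ref{reldensity-nonmeager} together with part~(2) of Theorem~\ref{density-nonmeager2} (passed through Bartoszy\'nski's characterization, Theorem~\ref{bartoszynski-characterization}). No new Tukey map needs to be constructed; the corollary is purely an exercise in chaining.

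For the upper bound $\ddd_{X,Y}^\rel\leq\non(\M)$, note that the hypotheses $\osc\notin X$ and $\osc\in Y$ are exactly what clause~(2) of Proposition~\ref{reldensity-nonmeager} asks for. That clause supplies the Tukey reduction $(D_X^\rel,\Sym(\omega),R_Y^\rel)\leq_T (\M,\twoom,\not\ni)$, and taking dominating numbers on both sides yields $\ddd_{X,Y}^\rel\leq\dd(\M,\twoom,\not\ni)=\non(\M)$.

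For the lower bound $\non(\M)\leq\ddd_{X,Y}^\rel$, I would first use the hypothesis $X\cap\{0,1\}\neq\emptyset$ (so either $0\in X$ or $1\in X$) to invoke clause~(2) of Theorem~\ref{density-nonmeager2}: for any $h\in\omom$ with $|h(n)|\geq 2^n+n+1$, one has $(\omom,\Phi_h,\in^\infty)\leq_T (D_X,\Sym(\omega),R_Y)$. Applying $\dd$ on both sides and using Theorem~\ref{bartoszynski-characterization} gives $\non(\M)=\dd(\omom,\Phi_h,\in^\infty)\leq\ddd_{X,Y}$. Now compose with clause~(1) of Proposition~\ref{reldensity-nonmeager}, which gives $(D_X,\Sym(\omega),R_Y)\leq_T(D_X^\rel,\Sym(\omega),R_Y^\rel)$ (hence $\ddd_{X,Y}\leq\ddd_{X,Y}^\rel$), and the lower bound follows.

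Combining both directions yields $\ddd_{X,Y}^\rel=\non(\M)$. There really is no main obstacle here beyond verifying that the hypotheses in the statement line up correctly with the three inputs: $\osc\notin X$ and $\osc\in Y$ are used only for the upper bound, while $X\cap\{0,1\}\neq\emptyset$ is used only for the lower bound. One could equally well state and prove a dual version, $\ddd_{X,Y}^{\rel,\perp}=\cov(\M)$, by taking the corresponding $\bb$'s in the same two Tukey reductions.
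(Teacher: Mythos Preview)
Your proof is correct and follows exactly the intended route: the paper places this corollary immediately after Proposition~\ref{reldensity-nonmeager} with no separate proof, and your chaining of Proposition~\ref{reldensity-nonmeager}(2) for the upper bound and Proposition~\ref{reldensity-nonmeager}(1) together with Theorem~\ref{density-nonmeager2}(2) and Theorem~\ref{bartoszynski-characterization} for the lower bound is precisely what is meant.
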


\begin{proof}[Proof of Proposition~\ref{reldensity-nonmeager}]
For (1), take $\varphi_+$ to be the identity function, $\varphi_+ (\pi)=\pi$, and
$\varphi_- ( A ) = ( A, \omega )$, for $A \in D_X$.

The proof of (2) is analogous to the first part of 
the proof of Theorem~\ref{density-nonmeager}. 
\end{proof}

The next two theorems (and their corollaries) should be seen as analogues of the results $\bb \leq \rrr$ and $\dd \leq \rrr_i, \rrr_f$
about rearrangement numbers~\cite[Theorems 11 and 12]{BBBHHL20}.

\begin{theorem} \label{reldensity-unbounding}
$(\omom, \omom , \not\geq^*) \leq_T ( D_X^\rel , \Sym (\omega) , R_Y^\rel)$ for any $X,Y$.
\end{theorem}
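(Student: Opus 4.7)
The plan is to use a very sparse set $B$ whose elements are indexed by a fast-growing sequence driven by $g$, to place $A$ inside $B$ by selecting those indices lying in a fixed auxiliary set $S \sub \omega$ realising some prescribed value $x \in X$, and then to exploit the fact that when $\pi$ is dominated by $g$, $\pi$ can only displace each element of $B$ by at most one ``level'' in the associated block structure. This perturbs the counting functions of $A$ and $B$ by at most a bounded additive constant, which is enough to keep the relative density completely unchanged, regardless of~$Y$.

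Fix any $x \in X$. Choose $S \sub \omega$ with $S$ and $\omega \sem S$ both infinite such that if $x \in [0,1]$ then $d(S) = x$, and if $x = \osc$ then $\underline{d}(S) = 0$ and $\overline{d}(S) = 1$. Define $\varphi_+ (\pi) (n) = \max \{ \pi (k) , \pi^{-1} (k) : k \leq n \} + 2^n$, exactly as in the proof of Theorem~\ref{density-unbounding}. For $g \in \omom$ with $g(n) \geq 2^n$ for almost every $n$, define $(i_n^g)_{n \in \omega}$ as in the proof of Theorem~\ref{density-unbounding} (so that $i_{n+1}^g = g(i_n^g)$ for $n \geq n_0$), set $B^g = \{ i_n^g : n \in \omega \}$, $A^g = \{ i_n^g : n \in S \}$, and let $\varphi_- (g) = (A^g, B^g) \in D_X^\rel$ (with an arbitrary fallback definition for other $g$). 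Since $|B^g \cap i_n^g| = n$ and $|A^g \cap i_n^g| = |S \cap n|$, and both counts are constant on each half-open interval $(i_n^g, i_{n+1}^g]$, one checks directly that $d_{B^g}(A^g) = x$.

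Next suppose $g \geq^* \varphi_+(\pi)$; then for $n$ sufficiently large we have $i_{n+1}^g > \max \{ \pi (k) , \pi^{-1} (k) : k \leq i_n^g \}$. Applied at index $n$, this gives $\pi (k) < i_{n+1}^g$ for all $k \leq i_n^g$; applied at index $n - 1$, it gives $\pi (k) \geq i_{n-1}^g$ for all $k \geq i_n^g$. Combining these, $\pi ( i_n^g ) \in [ i_{n-1}^g , i_{n+1}^g )$ eventually, and a direct counting argument yields $| \pi [ B^g ] \cap i_n^g | \in \{ n , n+1 \}$ together with $| \pi [ A^g ] \cap i_n^g | \in \{ |S \cap n| , |S \cap n| + 1 \}$ for large $n$. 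For arbitrary $m$ with $i_n^g \leq m < i_{n+1}^g$, monotonicity of cardinality in $m$ yields $| \pi [ B^g ] \cap m | = n + O(1)$ and $| \pi [ A^g ] \cap m | = |S \cap n| + O(1)$, so
\[ \left| \frac{ | \pi [ A^g ] \cap m |  }{ | \pi [ B^g ] \cap m | } -  \frac{ |S \cap n| }{ n }  \right|  = O(1/n) \]
uniformly in $m \in [ i_n^g , i_{n+1}^g )$. The $\liminf$ and $\limsup$ of the left-hand ratio as $m \to \infty$ therefore coincide with those of $|S \cap n| / n$: both equal $x$ if $x \in [0,1]$, and $0$ and $1$ respectively if $x = \osc$. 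In either case $d_{\pi [ B^g ]} ( \pi [ A^g ] ) = d_{B^g} ( A^g )$, so $\varphi_- ( g ) R_Y^\rel \pi$ fails, and this conclusion holds regardless of $Y$.

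The delicate point is preserving the \emph{entire} relative density value in all cases, including $x = \osc$: using a very sparse $B$ whose counting function $n \mapsto | B^g \cap i_n^g | = n$ grows linearly in the block index rather than exponentially in the block magnitude is what makes the perturbation introduced by $\pi$ purely additive and bounded, so that the fine oscillation pattern encoded by $S$ (and not merely the set of possible relative densities) is transferred unchanged to $\pi[A^g]$ inside $\pi[B^g]$.
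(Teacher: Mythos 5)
Your argument is correct and follows essentially the same route as the paper's: both build a very sparse set from $g$ whose elements a $\varphi_+(\pi)$-dominated $g$ forces $\pi$ to leave essentially in place, which preserves relative density. The paper organizes the same idea by factoring through the auxiliary relational system $(\omoms, \Sym(\omega), \centernot P)$ coming from the $\bb=\jj$ argument of~\cite{BBBHHL20} --- showing the dominated permutation preserves the relative order of the sparse set and deducing density preservation from that --- whereas you carry out the bounded-perturbation estimate on the counting functions directly in a single step.
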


\begin{corollary}
    $\bb\leq\ddd_{X,Y}^{\rel}$ for any $X,Y$. 
\end{corollary}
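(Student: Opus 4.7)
The plan is to imitate the first half of the proof of Theorem~\ref{density-unbounding}, but to take advantage of the extra flexibility that relative density affords: the sparse set $B_g$ can be used to transfer the behavior of an arbitrary subset of $\omega$ into the relative density of $A_g$ in $B_g$ verbatim, avoiding the case analysis that was necessary for Theorem~\ref{density-unbounding}. To this end, fix once and for all $r\in X$ together with an auxiliary $S\sub\omega$ such that $d(S)=r$ if $r\in[0,1]$ and $\underline d(S)<\overline d(S)$ if $r=\osc$.

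Define $\varphi_+(\pi)(n)=\max\{\pi(k),\pi^{-1}(k):k\le n\}+2^n$, exactly as in Theorem~\ref{density-unbounding}. For $g\in\omom$ with $g(n)\ge 2^n$ eventually, recursively set $i^g_0=0$ and $i^g_{n+1}=2^{i^g_n}$ for $n<n_0$, $i^g_{n+1}=g(i^g_n)$ for $n\ge n_0$, with $n_0$ minimal such that $g(n)\ge 2^n$ for all $n\ge n_0$. Put $B_g=\{i^g_n:n\in\omega\}$, $A_g=\{i^g_n:n\in S\}$, and $\varphi_-(g)=(A_g,B_g)$. Since $|A_g\cap i^g_n|/|B_g\cap i^g_n|=|S\cap n|/n$, we have $d_{B_g}(A_g)=r\in X$, so $\varphi_-(g)\in D_X^\rel$. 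For other $g$ the value of $\varphi_-(g)$ is irrelevant.

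The key step is to show that if $g\ge^*\varphi_+(\pi)$, then $d_{\pi[B_g]}(\pi[A_g])=r=d_{B_g}(A_g)$, so that $\varphi_-(g)R_Y^\rel\pi$ fails independently of $Y$. The choice of $\varphi_+$ guarantees, for all sufficiently large $k$, that $\pi[\{0,\dots,i^g_k\}]\sub[0,i^g_{k+1})$ and $\pi^{-1}[\{0,\dots,i^g_k\}]\sub[0,i^g_{k+1})$. Enumerate $\pi[B_g]=\{y_0<y_1<\cdots\}$ and let $\sigma\in\Sym(\omega)$ be the unique bijection with $y_j=\pi(i^g_{\sigma(j)})$. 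The two inclusions, translated by counting how many $y_j$ fall below $i^g_{k+1}$, yield $|\sigma(j)-j|\le 1$ for all large $j$. For $m\in(y_{j-1},y_j]$ both $|\pi[B_g]\cap m|=j$ and $|\pi[A_g]\cap m|=|\sigma^{-1}(S)\cap j|$ are constant, so the relative density we need is determined by the upper and lower densities of $\sigma^{-1}(S)$ in $\omega$.

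The main obstacle is then the elementary but crucial observation that an almost-identity permutation preserves every subset's lower and upper densities: if $|\sigma(j)-j|\le 1$ eventually, then $\sigma[\{0,\dots,n-1\}]$ differs from $\{0,\dots,n-1\}$ by at most two elements, so $\bigl||\sigma^{-1}(S)\cap n|-|S\cap n|\bigr|\le 2$ for all $n$, and hence $\underline d(\sigma^{-1}(S))=\underline d(S)$ and $\overline d(\sigma^{-1}(S))=\overline d(S)$. This handles both the case $r\in[0,1]$ (where $\sigma^{-1}(S)$ still has density $r$) and the case $r=\osc$ (where $\sigma^{-1}(S)$ still oscillates between the same liminf and limsup values as $S$), completing the uniform argument and showing $d_{\pi[B_g]}(\pi[A_g])=r$ as required.
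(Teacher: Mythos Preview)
Your argument is correct and self-contained, but it takes a somewhat different route from the paper's. The paper factors the reduction through an intermediate relational system $(\omoms,\Sym(\omega),\centernot P)$, where $A\,P\,\pi$ means that $\pi$ preserves the relative order of $A$ except on a finite set; the first reduction $(\omom,\omom,\not\geq^*)\leq_T(\omoms,\Sym(\omega),\centernot P)$ is lifted from the proof of $\bb=\jj$ in~\cite{BBBHHL20}, using $f_\pi(n)$ chosen so that $\pi(x)<\pi(y)$ whenever $x\le n<f_\pi(n)\le y$. With this choice, $g\geq^* f_\pi$ forces $\pi$ to preserve the order of $B_g=\{g^k(0)\}$ \emph{exactly} past some point, so the induced permutation $\sigma$ is eventually the identity and $d_{\pi[B_g]}(\pi[A^{B_g}])=d_{B_g}(A^{B_g})$ is immediate.

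Your choice $\varphi_+(\pi)(n)=\max\{\pi(k),\pi^{-1}(k):k\le n\}+2^n$, borrowed from Theorem~\ref{density-unbounding}, is slightly weaker: it only pins $\pi(i^g_n)$ into the window $(i^g_{n-1},i^g_{n+1})$ for large $n$, so consecutive elements of $B_g$ may still swap under $\pi$. You then compensate with the extra step showing that $|\sigma(j)-j|\le 1$ eventually and that an almost-identity permutation preserves lower and upper densities. This is correct (the bound really is $\le 1$ once $j$ exceeds $\max\{\sigma^{-1}(m):m<n_0\}$, as one checks by counting $|\{m:\pi(i^g_m)<\pi(i^g_n)\}|$), though the passage ``translated by counting how many $y_j$ fall below $i^g_{k+1}$'' is terse and would benefit from a line or two of justification.

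In short: both proofs build the same kind of sparse set $B_g$ by iterating $g$, but the paper's $f_\pi$ yields exact order preservation and a cleaner finish, while your $\varphi_+$ leaves room for adjacent transpositions, which you handle with an additional elementary argument. Your version has the advantage of being self-contained and of treating $r\in[0,1]$ and $r=\osc$ uniformly via the fixed template set $S$.
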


\begin{proof}[Proof of Theorem~\ref{reldensity-unbounding}]

From ~\cite{BBBHHL20}, recall the definition of the cardinal $\jj$ and the proof that $\bb=\jj$. There, the argument for $\bb\leq\jj$ can be easily modified to give a reduction $(\omom, \omom , \not\geq^*) \leq_T( \omoms, \Sym (\omega), \centernot P )$, where $A P\pi$ means that $\pi$ does not change the relative order of members of $A$ except possibly for finitely many elements (in this case, we say that \emph{$\pi$ preserves $A$}).
Indeed, for each $\pi\in\Sym (\omega)$, let $\psi_+(\pi)= f_\pi\in\omom$ be
such that $n<f_\pi(n)$ and $\pi(x)<\pi(y)$ for all $x\leq n<f_\pi(n)\leq y$.
Also, for each strictly increasing $g\in\omom$, put $\psi_-(g)=A_g=\{g(0),g(g(0)),\ldots ,g^k(0),\ldots\}$. If $g$ is not of this form,
choose some strictly increasing $g^\prime>g$ and
put $\psi_-(g)=\psi_-(g^\prime)$.
Then the implication 
$g\geq^* f_\pi\implies A_g P\pi$ holds, so
$(\psi_-,\psi_+)$ gives the desired Tukey reduction.

Now, it suffices to show that $( \omoms, \Sym (\omega), \centernot P )\leq_T( D_X^\rel , \Sym (\omega) , R_Y^\rel)$. To this end, first let $\varphi_+$ be the identity on $\Sym(\omega)$.  To 
define $\varphi_-:\omoms\to D_X^\rel$, fix, for each $B\in\omoms$, a set
$A^B\subseteq B$ such that $d_B(A^B)\in X$, and put $\varphi_-(B)=(A^B,B)$.
Suppose $B P\pi$. Then $\pi$ preserves $B$, so
$d_{\pi(B)}(\pi(A^B))=d_B(A^B)$, and 
$\varphi_-(B) R_{Y}^{\rel}\pi$ fails. This completes the proof of the theorem.
\end{proof}

\begin{theorem} \label{reldensity-dominating}
If $\osc \notin Y$ and $X \cap [0,1] \neq \emptyset$, then $(\omom, \omom, \leq^* ) \leq_T ( D_X^\rel , \Sym (\omega) , R_Y^\rel)$.
\end{theorem}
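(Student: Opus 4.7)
The strategy is to exhibit a Tukey reduction by constructing $\varphi_-:\omom\to D_X^\rel$ and $\varphi_+:\Sym(\omega)\to\omom$ such that $\varphi_-(f)\,R_Y^\rel\,\pi$ implies $f\leq^*\varphi_+(\pi)$. Equivalently, whenever $f(n)>\varphi_+(\pi)(n)$ for infinitely many $n$, the relative density of $\varphi_-(f)$ should be either preserved by $\pi$ at its original value $r\in X\cap[0,1]$ or forced to oscillate; in the latter case the hypothesis $\osc\notin Y$ still yields $\neg(\varphi_-(f)\,R_Y^\rel\,\pi)$.

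Fix $r\in X\cap[0,1]$ and treat the principal case $r\in(0,1)$ (the endpoints $r=0$ and $r=1$ will be analogous, with $A_f$ sparse or co-sparse inside $B_f$). Set $\varphi_+(\pi)(n):=\min\{m:\pi[n]\cup\pi^{-1}[n]\subseteq m\}$, the ``spread'' function of $\pi$. For $f\in\omom$ strictly increasing (which we may assume), recursively define $i^f_0=0$ and $i^f_{n+1}=f(i^f_n)+1$, and partition $\omega$ into the blocks $I^f_n=[i^f_n,i^f_{n+1})$. Within each $I^f_n$ we will run the block-and-code construction of Case~2 of Theorem~\ref{density-unbounding} (the integers $\ell_c$ with $\ell_c/2^c\to r$, the bijections $e_c$, and the code sets $E^c_v$), but with $B_f$ now placed as a controlled sparse subset of $I^f_n$ rather than filling it, and with $A_f\subseteq B_f$ chosen via the same combinatorial codes so that $d_{B_f}(A_f)=r$ and $(A_f,B_f)\in D_X^\rel$.

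To verify the reduction we will replay the big/small/neither trichotomy of Theorem~\ref{density-unbounding} Case~2, relativized to $B_f$. At each block boundary $i^f_n$ satisfying $f(i^f_n)>\varphi_+(\pi)(i^f_n)$ (the recursion for $i^f_n$ will be tuned to produce infinitely many of these), $\pi$ sends $[0,i^f_n)$ into $[0,i^f_{n+1})$ and conversely, so the partial ratios $|\pi[A_f]\cap m|/|\pi[B_f]\cap m|$ at the corresponding witness points $m^\pi_{i^f_n}$ are controlled. If $\pi$ is neither big nor small on the code sets $E^c_v\cap B_f$, these ratios converge to $r$, yielding $d_{\pi[B_f]}(\pi[A_f])\in\{r,\osc\}$; if $\pi$ is big (respectively small), the ratios along the subsequences indexed by $n(c)$ and $n'(c)$ separate by a fixed margin, yielding $d_{\pi[B_f]}(\pi[A_f])=\osc$. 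In every case, $R_Y^\rel$ fails because $\osc\notin Y$.

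The main obstacle will be bridging the gap between the $\dd$-level Tukey hypothesis $f\not\leq^*\varphi_+(\pi)$---which only supplies $f(n)>\varphi_+(\pi)(n)$ infinitely often---and the block-level statement needed, namely $f(i^f_n)>\varphi_+(\pi)(i^f_n)$ for infinitely many $n$. Closing this gap will require coordinating the recursion defining $i^f_n$ with the internal sparsity of $B_f$ within each block, so that every sufficiently large excursion of $f$ over $\varphi_+(\pi)$ is propagated to a useful block boundary. The extra freedom provided by the relative density setting---letting the padding $\omega\setminus B_f$ absorb the local shifts of $\pi$---is exactly what upgrades the $\bb$-level Tukey reduction of Theorem~\ref{reldensity-unbounding} to the $\dd$-level reduction stated here.
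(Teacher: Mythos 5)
Your proposal goes down a very different and much longer road than the paper's, and the gap you explicitly flag --- propagating $f(n)>\varphi_+(\pi)(n)$ from an arbitrary $n$ to a block boundary $i^f_n$ --- is a real one that your sketch does not close. But the paper sidesteps it entirely, and the resolution is not ``coordinating the recursion with internal sparsity''; the block-and-code machinery of Theorem~\ref{density-unbounding} Case 2 is simply not needed here. The paper's $\varphi_+$ is the order-preserving function $f_\pi$ from the proof of Theorem~\ref{reldensity-unbounding} (satisfying $\pi(x)<\pi(y)$ whenever $x\leq n<f_\pi(n)\leq y$), which is strictly stronger than your spread function $\min\{m:\pi[n]\cup\pi^{-1}[n]\subseteq m\}$ and is what makes the ratio count tractable; and $\varphi_-(g)=(A_g,B_g)$ reuses $B_g=\{g(0),g^2(0),\ldots\}$ from that same proof together with any fixed $A_g\subseteq B_g$ with $d_{B_g}(A_g)\in X$ --- no $\ell_c$, no $e_c$, no $E^c_v$.

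Here is the step you are missing. Write $B_g=\{a_0<a_1<\cdots\}$. If $g\not\leq^* f_\pi$ there are infinitely many $n$ with $g(n)>f_\pi(n)$, and since each interval $[a_k,a_{k+1})$ is finite, infinitely many $k$ contain such an $n$. For such an $n\in[a_k,a_{k+1})$, monotonicity of $g$ gives $a_{k+1}=g(a_k)\leq g(n)<g(a_{k+1})=a_{k+2}$, while $g(n)>f_\pi(n)$ and the defining property of $f_\pi$ give $\pi(a_j)<\pi(a_{j'})$ for all $j\leq k$ and $j'\geq k+2$; only $a_{k+1}$ escapes control. Taking $m:=\min\pi[\{a_{j'}:j'\geq k+2\}]$ one gets $|\pi[B_g]\cap m|\in\{k+1,k+2\}$ and $|\pi[A_g]\cap m|\in\{c_k,c_k+1\}$ with $c_k=|A_g\cap\{a_0,\ldots,a_k\}|$; since $c_k/(k+1)\to d_{B_g}(A_g)$, the ratio $|\pi[A_g]\cap m|/|\pi[B_g]\cap m|$ converges to $d_{B_g}(A_g)$ along these $k$. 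Hence $d_{\pi[B_g]}(\pi[A_g])$ equals $d_{B_g}(A_g)$ or $\osc$, and $R_Y^\rel$ fails because $\osc\notin Y$. In particular the big/small/neither trichotomy is not needed at all: for relative density it suffices to find one subsequence of ratios pinned at the original value, whereas Theorem~\ref{density-unbounding} must actively manufacture oscillation --- a genuinely harder task, and the reason the trichotomy appears there. So while your high-level intuition (relative density lets padding absorb local shifts, upgrading $\bb$ to $\dd$) is sound, the execution you sketch both misses the simple fix for the propagation gap and imports heavy machinery the relative setting makes unnecessary.
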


If $X = \{ \osc \}$ and $0 \notin Y$ or $1 \notin Y$, this also 
holds by Theorem~\ref{density-continuum}. We do not know 
whether this is still true if $X = \{ \osc \}$ and $\{ 0,1 \} \sub Y$.

\begin{corollary}
 If $\osc \notin Y$ and $X \cap [0,1] \neq \emptyset$, then $\dd\leq\ddd_{X,Y}^{\rel}$.
\end{corollary}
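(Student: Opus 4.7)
The plan is to adapt the argument for $\dd \leq \rrr_i$ from~\cite[Theorem 12]{BBBHHL20} to the relative-density setting, where the role of ``padding with zeros'' in a conditionally convergent series is played by the elements of $\omega\setminus B$. Fix $r\in X\cap[0,1]$. When $r\in\{0,1\}$ the desired Tukey reduction follows by composing Theorem~\ref{density-continuum} (which applies since $\osc\notin Y$) with Proposition~\ref{reldensity-nonmeager}(1) and the elementary reduction $(\omom,\omom,\leq^*)\leq_T(\cc,[\cc]^{\aleph_0},\in)$. So the substantive case is $r\in(0,1)$.

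Define $\varphi_+(\pi)(n)=\max\{\pi(k),\pi^{-1}(k):k\leq n\}+1$, so that $\pi[\{0,\dots,n\}]\cup\pi^{-1}[\{0,\dots,n\}]\subseteq[0,\varphi_+(\pi)(n))$. Given $g\in\omom$, assumed WLOG to be strictly increasing with $g(n)>n$, recursively set $i_0=0$ and $i_{n+1}=g(i_n+n)$. Let $J_n=[i_n,i_n+n)$, $B_g=\bigcup_n J_n$, and choose $A_g\cap J_n$ to be a $\lfloor rn\rfloor$-element subset arranged in a Beatty-like pattern (e.g.\ $\{i_n+\lfloor j/r\rfloor:\lfloor j/r\rfloor<n\}$), so that any initial segment of $J_n$ has $A_g$-proportion approaching $r$. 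A routine Cesàro computation yields $d_{B_g}(A_g)=r$, and we set $\varphi_-(g)=(A_g,B_g)\in D_X^\rel$.

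Suppose $\varphi_-(g)R_Y^\rel\pi$, and derive $g\leq^*\varphi_+(\pi)$. Arguing by contradiction, assume $W=\{n:g(i_n+n)>\varphi_+(\pi)(i_n+n)\}$ is infinite. For $n\in W$, slowness gives
\[ \pi\bigl[\textstyle\bigcup_{j\leq n}J_j\bigr]\subseteq[0,i_{n+1}) \quad\text{and}\quad \pi(c)\geq i_n+n \text{ for all } c\geq i_{n+1}. \]
The heart of the argument is the following claim: along an infinite subset of $W$, there exist scales $m_n$ such that $|\pi[A_g]\cap m_n|/|\pi[B_g]\cap m_n|\to r$. Granted the claim, $r$ is a limit point of the ratio function in both directions, whence $\underline{d}_{\pi[B_g]}(\pi[A_g])\leq r\leq\overline{d}_{\pi[B_g]}(\pi[A_g])$, so $d_{\pi[B_g]}(\pi[A_g])\in\{r,\osc\}$. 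Since $\osc\notin Y$, we would get $d_{\pi[B_g]}(\pi[A_g])=r$, contradicting $\varphi_-(g)R_Y^\rel\pi$.

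The principal obstacle is the claim itself. The ``extras''---images $\pi(c)$ for $c\in\bigcup_{j>n}J_j$ that land in the gap $[i_n+n,i_{n+1})$---can a priori be arranged adversarially by $\pi$, and their count is only bounded by the (potentially huge) quantity $i_{n+1}-i_n-n$, so one cannot simply make them negligible compared to $N_n=\binom{n+1}{2}$. Instead, one must exploit simultaneously the structured spread of $A_g$ within each $J_n$ and the combinatorial constraints that slowness at $i_n+n$ places on $\pi$'s distribution of extras across later blocks, selecting $m_n$ by a pigeonhole-type argument on the sorted positions of $\pi[B_g]\cap[0,i_{n+1})$ so that the contributions of extras average out to proportion close to $r$ along a subsequence. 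This is the technically delicate part, directly paralleling the multi-scale partial-sum analysis used in the rearrangement-theorem counterpart.
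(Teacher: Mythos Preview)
Your proof has a genuine gap in the main case $r\in(0,1)$: you correctly identify the ``extras'' obstacle but do not actually resolve it. The hand-wave to a ``pigeonhole-type argument on the sorted positions of $\pi[B_g]\cap[0,i_{n+1})$'' does not work, because $\pi$ is essentially unconstrained on $\bigcup_{j>n}J_j$ beyond the lower bound $\pi(c)\geq i_n+n+1$. In particular, $\pi$ may map arbitrarily many elements of $A_g\cap\bigcup_{j>n}J_j$ (and none of $(B_g\setminus A_g)\cap\bigcup_{j>n}J_j$) into $[i_n+n+1,i_{n+1})$, interleaving them with $\pi[\bigcup_{j\leq n}J_j]$ however it likes; the Beatty structure inside each $J_j$ gives no control over which elements of $J_j$ get sent early by $\pi$. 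So for every candidate scale $m\in(i_n+n,i_{n+1}]$ the ratio $|\pi[A_g]\cap m|/|\pi[B_g]\cap m|$ can be pushed well away from $r$. There is also a smaller issue: from $g\not\leq^*\varphi_+(\pi)$ you cannot conclude that the failure occurs at the specific points $i_n+n$, so $W$ need not be infinite as stated.

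The paper avoids the obstacle entirely by a much sparser choice of $B_g$: take $B_g=\{g^k(0):k\in\omega\}$, the set of iterates of $g$, and pick any $A_g\sub B_g$ with $d_{B_g}(A_g)\in X$. The companion function $\varphi_+(\pi)=f_\pi$ is the one from the proof of Theorem~\ref{reldensity-unbounding}, chosen so that $x\leq n<f_\pi(n)\leq y$ implies $\pi(x)<\pi(y)$. Then whenever $g(m)>f_\pi(m)$, the image under $\pi$ of $B_g\cap[0,m]$ lies entirely below the image of $B_g\cap[g(m),\infty)$, and since $B_g$ has at most one element in each interval $[g^k(0),g^{k+1}(0))$ there are no ``extras'' to worry about: at the scale $\max\pi[B_g\cap[0,m]]+1$ the ratio $|\pi[A_g]\cap\cdot|/|\pi[B_g]\cap\cdot|$ equals exactly the original ratio at the corresponding level of $B_g$. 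Hence along a subsequence the ratio tends to $d_{B_g}(A_g)$, so $d_{\pi[B_g]}(\pi[A_g])$ is either this value or $\osc$, and since $\osc\notin Y$ the relation $R_Y^\rel$ fails. The moral is that putting whole intervals into $B_g$ is exactly what creates the difficulty; a single point per level makes it disappear.
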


\begin{proof}[Proof of Theorem~\ref{reldensity-dominating}]
 The case $\osc\notin X$ clearly suffices, and an argument analogous to the one in ~\cite{BBBHHL20} for
 $\dd\leq\rrr_{fi}$ works. We provide the necessary adaptations to get a Tukey reduction.

 So, suppose $\osc\notin X$. For $g\in\omom$ and $\pi\in\Sym(\omega)$, let
 $\varphi_-(g)=(A_g, B_g)$ and $\varphi_+(\pi)=f_{\pi}$, where $B_g$ and $f_{\pi}$ 
 are as in the last proof, and $d_{B_g}(A_g)\in X$.
 Now, to get the implication 
 $(A_g, B_g)R_Y^\rel \pi\implies g\leq^* f_{\pi}$, note that if $g\not\leq^* f_{\pi}$
 then the sequence $\left( { |\pi[A_g] \cap n| \over |\pi[B_g] \cap n | }: n\in\omega \right)$ has a subsequence converging to
 $d_{B_g}(A_g)$. So $(A_g, B_g)R_Y^\rel \pi$ fails, and this completes the proof.
\end{proof}





The above results suggest the question of how far 
the analogy between $\rrr_{X,Y}$ and $\ddd_{X,Y}^{\rel}$
goes. In particular, one could ask:

\begin{question}   \label{ddversusrr}
Is $\rrr=\ddd_{(0,1),\all}^{\rel}$?
\end{question}

The point is that both cardinals have the same lower bounds 
$\bb$ and $\cov (\N)$ and the same upper bound $\non (\M)$
(see~\cite{BBBHHL20} and Theorem~\ref{density-covnull},
Proposition~\ref{reldensity-nonmeager}, and Theorem~\ref{reldensity-unbounding}). Analogously one may ask whether $\rrr_i = \ddd_{(0,1) ,
\{ 0,1 \} }^{\rel}$ or $\rrr_f = \ddd_{(0,1) , ( 0,1 ) }^{\rel}$.


We may also look at the similarity or nonsimilarity between
rearrangement numbers and relative density numbers  by
considering the set of permutations that leave all conditionally
convergent series (all relative densities, resp.) fixed.
The former have been studied in a number of papers (e.g.~\cite{Ag55}
or~\cite{GGRR88}), while a connection between the two has been
established by Garibay, Greenberg, Resendis, and Rivaud~\cite{GGRR88}. 
But, alas, they have a different notion of relative density!
Let $\{ b_i : i \in \omega \}$ be the increasing enumeration of $B$.
Say that $A \sub B$ has {\em strong relative density} $r$ in $B$,
$sd_{B} (A) = r$, if given any $\varepsilon > 0$ there is $N$
such that if $m - n > N$, then 
\[ \left| {  | A \cap \{ b_n, b_{n+1}, ... , b_{m-1} \} | 
\over m - n } - r \right| < \varepsilon\]
Note that $sd_B (A) = r$ implies $d_B (A) = r$ but not vice-versa.
Say that $\pi \in \Sym (\omega)$ {\em preserves c.c. series} 
({\em preserves (strong) density}, resp.) if given any
conditionally convergent series $\sum a_n$ (any sets $A \sub B$
such that the (strong) relative density $(s)d_B (A)$ exists,
resp.), $\sum_n a_{\pi (n)} = \sum a_n$ ($(s)d_{\pi [B] }
(\pi [A]) = (s)d_B (A)$, resp.) holds. We need some more
combinatorial notions:

\begin{definition}
    \begin{enumerate}
        \item For finite subsets $M,N \sub \omega$ write $M < N$
        if $\max (M) < \min (N)$.
        \item \cite[Definition 1.2]{GGRR88} Two sets $M = \{ m_0 < ... < m_k\}$
        and $N = \{ n_0 < ... < n_k \}$ of natural numbers of the
        same size are {\em collated} if $m_0 <
        n_0 < m_1 < ... < m_k < n_k$.
        \item \cite[Definition 1.3]{GGRR88} $\pi \in \Sym (\omega)$
        {\em satisfies condition} $\mathrm{A}$ if there exists $k \in \omega$ 
        such that whenever $M$ and $N$ are collated and $\pi[M] <
        \pi [N]$ then $|N| = |M| < k$.
        \item $\pi \in \Sym (\omega)$ {\em satisfies condition} $\mathrm{B}$ if
        there exists $k \in \omega$ such that for any $M , N $
        with $|M| = |N| $, $M < N$, and $\pi[N] < \pi[M]$, we have
        $|M| = |N| < k$.
    \end{enumerate}
\end{definition}

It is easy to see that the permutations satisfying condition $\mathrm{B}$ form
a subgroup of $\Sym (\omega)$. On the other hand, inverses of
permutations with condition $\mathrm{A}$ do not necessarily have condition 
$\mathrm{A}$~\cite[Example 1.8]{GGRR88}. Furthermore, if $\pi$ satisfies 
condition $\mathrm{B}$ it also satisfies condition $\mathrm{A}$, while it is easy to see
there are $\pi$ such that both $\pi$ and $\pi^{-1}$ satisfy condition
$\mathrm{A}$ but not condition $\mathrm{B}$. The main result of~\cite{GGRR88} is:

\begin{theorem}[Garibay, Greenberg, Resendis, and Rivaud] For $\pi \in \Sym (\omega)$, the following are
equivalent:
\begin{enumerate}
    \item $\pi$ satisfies condition $\mathrm{A}$,
    \item there exists $k \in \omega$ such that for every $n$,
    $\pi^{-1} [n]$ is a union of at most $k$ intervals,
    \item $\pi^{-1}$ preserves c.c. series,
    \item $\pi$ preserves strong density.
\end{enumerate}
\end{theorem}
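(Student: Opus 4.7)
My plan is to establish the equivalence by closing two cycles that share (2) as a common vertex: first the classical cycle (1) $\Leftrightarrow$ (2) $\Rightarrow$ (3) $\Rightarrow$ (1) on the c.c.\ series side, then the equivalence (2) $\Leftrightarrow$ (4) which carries the new strong-density content.

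For (1) $\Leftrightarrow$ (2), the maximal intervals of $\pi^{-1}[n]$ and the collated pairs on which $\pi$ inverts order measure the same thing. Given $\pi^{-1}[n]$ with $j+1$ maximal intervals $I_0<\cdots<I_j$, choosing $b_s\in I_s$ and $a_s$ in the gap between $I_s$ and $I_{s+1}$ produces collated sets $M=\{b_0,\ldots,b_{j-1}\}$ and $N=\{a_0,\ldots,a_{j-1}\}$ of size $j$ with $\pi[M]<\pi[N]$. Conversely, from collated $M,N$ of size $j+1$ with $\pi[M]<\pi[N]$, setting $n=\max\pi[M]+1$ forces $\pi^{-1}[n]$ to split into at least $j+1$ maximal intervals separated by the elements of $N$. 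Thus the bound in condition $\mathrm{A}$ matches the interval bound in (2) up to a shift by $1$.

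For (2) $\Rightarrow$ (3), I would compare $S_n=\sum_{i<n}a_i$ with $S'_n=\sum_{i\in\pi^{-1}[n]}a_i$. Their difference is supported on $n\,\triangle\,\pi^{-1}[n]$, which decomposes into at most $2k+1$ integer intervals: the at most $k+1$ gaps of $\pi^{-1}[n]$ inside $n$, plus the at most $k$ portions of $\pi^{-1}[n]$ lying above $n$. Because $\pi$ is a bijection, any fixed initial segment below $N_0$ is eventually contained in both $n$ and $\pi^{-1}[n]$, so for large $n$ the minima of all these error intervals exceed $N_0$. Cauchy's criterion applied to each of the boundedly many intervals gives $|S_n-S'_n|\to 0$. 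For the contrapositive (3) $\Rightarrow$ (1), I would extract collated $(M_k,N_k)$ with $|M_k|\geq k$ and $\pi[M_k]<\pi[N_k]$, arranged in disjoint clusters along $\omega$, and set $a_i=+1/\sqrt{k}$ on $M_k$, $a_i=-1/\sqrt{k}$ on $N_k$, $a_i=0$ elsewhere. Within each cluster positive and negative terms alternate and cancel, so the original series converges to $0$; but after the $\pi^{-1}$-rearrangement each block of $k$ positives is swept entirely before its negative counterpart, so partial sums excurse by roughly $\sqrt{k}\to\infty$, producing oscillation.

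The heart of the theorem, and the step I expect to be hardest, is the equivalence (2) $\Leftrightarrow$ (4). For (2) $\Rightarrow$ (4), given $A\subseteq B$ with $sd_B(A)=r$, a block $\pi[B]\cap[b'_n,b'_m)$ of $\pi[B]$ has $\pi$-preimage $\pi^{-1}[b'_m]\setminus\pi^{-1}[b'_n]$, a union of at most $2k$ integer intervals $J_1,\ldots,J_s$; on those $J_i$ with $|B\cap J_i|>N$ the strong-density hypothesis forces $|A\cap J_i|/|B\cap J_i|$ within $\varepsilon$ of $r$, while the remaining small $J_i$ contribute total error at most $2kN/(m-n)$, vanishing as $m-n\to\infty$, and a weighted-average bound yields $sd_{\pi[B]}(\pi[A])=r$. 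For (4) $\Rightarrow$ (2), I would again argue contrapositively: from the failure of (2) select collated $(M_k,N_k)$ with $\pi[M_k]<\pi[N_k]$ and $|M_k|=k$ in disjoint clusters, then build $A\subseteq B$ out of very long ``neutral'' intervals in which $A$ has relative density exactly $r$, with $M_k\cup N_k$ sandwiched between them, so that $sd_B(A)=r$ is maintained on the source side. On the image side, the collated inversion produces blocks of $\pi[B]$ in which the $\pi[M_k]$ and $\pi[N_k]$ contributions are temporally separated, and by choosing $A\cap(M_k\cup N_k)$ asymmetrically (for instance, $M_k\subseteq A$ while $N_k\cap A=\emptyset$) the ratios along two different subsequences of block endpoints are pushed apart, so $sd_{\pi[B]}(\pi[A])$ fails to exist. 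The main obstacle is calibrating the lengths of the neutral intervals against the growth of $k$ so that the neutral contributions do not swamp the detectable imbalance while strong density is still guaranteed on the source; this is the delicate point where the ``strong'' in strong density is genuinely used, since ordinary density would allow too much averaging to recover $r$ in the image.
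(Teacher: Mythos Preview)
The paper does not prove this theorem: it is quoted from \cite{GGRR88}, and the paper only remarks afterwards that the equivalence of (2) and (3) is originally due to Agnew~\cite{Ag55} while (1) $\Leftrightarrow$ (2) ``is relatively easy to see (see also~\cite[Proposition 2.2]{GGRR88})''. So there is no in-paper proof to compare your argument against; you have supplied one where the authors simply cite the literature.

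That said, your sketch is essentially correct, and the parts (1) $\Leftrightarrow$ (2) $\Leftrightarrow$ (3) follow the classical Agnew route. One simplification is available in your (4) $\Rightarrow$ (2): the neutral intervals and the ``delicate calibration'' you worry about are unnecessary. If you simply take $B = \bigcup_k (M_k \cup N_k)$ and $A = \bigcup_k M_k$, then the collated condition means that the increasing enumeration of $B$ is \emph{exactly} alternating between $A$ and $B \setminus A$, both within each cluster and across cluster boundaries, so $sd_B(A) = \tfrac{1}{2}$ on the nose with no error terms. On the image side, provided you recursively select the pairs so that $M_{k+1} \cup N_{k+1}$ lies above $M_k \cup N_k$ and $\pi[M_{k+1} \cup N_{k+1}]$ lies above $\pi[M_k \cup N_k]$ (achievable by the same trimming you use for (3) $\Rightarrow$ (1)), the enumeration of $\pi[B]$ within the $k$-th cluster consists of $k$ consecutive elements of $\pi[A]$ followed by $k$ consecutive elements of $\pi[B] \setminus \pi[A]$, so windows of length $k$ witness densities $1$ and $0$ and $sd_{\pi[B]}(\pi[A])$ cannot exist. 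Your closing remark that the ``strong'' in strong density is essential here is on target: for ordinary relative density the correct combinatorial condition is the strictly stronger condition~$\mathrm{B}$, which is exactly the content of the theorem that follows in the paper.
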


The equivalence of (2) and (3) is originally due to Agnew~\cite{Ag55},
and the equivalence of the two combinatorial conditions (1) and
(2) is relatively easy to see (see also~\cite[Proposition 2.2]{GGRR88}).

\begin{theorem}
    For $\pi \in \Sym (\omega)$, the following are equivalent:
    \begin{enumerate}
        \item $\pi$ satisfies condition $\mathrm{B}$,
        \item $\pi$ preserves density.
    \end{enumerate}
\end{theorem}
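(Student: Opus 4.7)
The plan is to prove the two directions of the equivalence separately: $(1) \Rightarrow (2)$ by a direct counting argument, and $(2) \Rightarrow (1)$ by contrapositive, constructing from a failure of condition $\mathrm{B}$ an explicit $A \subseteq B$ that breaks density preservation.

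For $(1) \Rightarrow (2)$, the main point is to translate condition $\mathrm{B}$ into a uniform bound on $|L_n|$, where $L_n = \{i < n : \pi(i) \geq n\}$. Setting $R_n = \{i \geq n : \pi(i) < n\}$, the pair $(L_n,R_n)$ satisfies $|L_n| = |R_n|$, $L_n < R_n$, and $\pi[R_n] < n \leq \pi[L_n]$, so condition $\mathrm{B}$ with constant $k$ forces $|L_n| < k$; the converse is similar, since any $M < N$ with $|M|=|N|=j$ and $\pi[N] < \pi[M]$ forces $N \subseteq R_{\min \pi[M]}$ as soon as $j \geq k$. This yields $|\pi^{-1}[n] \triangle n| < 2k$, and hence $\bigl| |S \cap \pi^{-1}[n]| - |S \cap n| \bigr| < 2k$ for every $S \subseteq \omega$. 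For $A \subseteq B$ with $d_B(A) = r$ (so $|B \cap n| \to \infty$), we then conclude
\[\frac{|\pi[A] \cap n|}{|\pi[B] \cap n|} \;=\; \frac{|A \cap \pi^{-1}[n]|}{|B \cap \pi^{-1}[n]|} \;=\; \frac{|A \cap n| + O(1)}{|B \cap n| + O(1)} \;\longrightarrow\; r,\]
which is precisely $d_{\pi[B]}(\pi[A]) = r$.

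For $(2) \Rightarrow (1)$, assume condition $\mathrm{B}$ fails and, for each $k$, extract $M_k, N_k$ with $|M_k|=|N_k|=j_k \to \infty$, $M_k < N_k$, and $\pi[N_k] < \pi[M_k]$. A Ramsey-type thinning lets us impose a uniform interleaving pattern on the pairs; most cleanly, we may assume $N_k < M_{k+1}$ in $\omega$ and $\pi[M_k] < \pi[N_{k+1}]$ in the image. One then puts $B = \bigcup_k(M_k \cup N_k) \cup P$ for carefully placed padding $P$, and $A = \bigcup_k M_k \cup A_P$ with $A_P \subseteq P$ of relative density $\tfrac12$, arranged so that $d_B(A) = \tfrac12$; on the image side the pattern $\pi[N_k],\,\pi[M_k],\,\pi[N_{k+1}],\ldots$ then causes the running ratio $|\pi[A] \cap n|/|\pi[B] \cap n|$ to drop below $\tfrac12$ at $n = \max \pi[N_k]+1$ by an amount proportional to $j_k$ divided by the cumulative total, while returning to $\tfrac12$ at $n = \max \pi[M_k]+1$, producing oscillation.

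The main obstacle will be the simultaneous requirement that $P$ be large and balanced enough to force $d_B(A)$ to converge, yet not so large or so intertwined with the $\pi$-image of the pairs that the reversal contributions on the image side are washed out. Since we have no a priori control over how $\pi$ acts on $P$, the padding must be selected via further thinning, likely by choosing $j_k$'s that grow geometrically and drawing $P$ from intervals sandwiched between $N_k$ and $M_{k+1}$ in $\omega$ (and whose $\pi$-images lie between $\pi[M_k]$ and $\pi[N_{k+1}]$), so that both the $B$-side and $\pi[B]$-side running sums have the expected asymptotics. This bookkeeping is the technical heart of the argument and parallels the super-exponential growth arguments underlying the lower-bound constructions for $\rrr_i$ and $\rrr_f$ in \cite{BBBHHL20}.
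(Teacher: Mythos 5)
Your argument for $(1)\Rightarrow(2)$ is correct and is in fact cleaner and more direct than the paper's. The paper proves this implication by contrapositive, extracting from a failure of density preservation a sequence of collated pairs of unbounded size; you instead observe that condition $\mathrm{B}$, applied to the pair $L_n=\{i<n:\pi(i)\geq n\}$ and $R_n=\{i\geq n:\pi(i)<n\}$, yields a uniform bound $|L_n|=|R_n|<k$, hence $\bigl||S\cap\pi^{-1}[n]|-|S\cap n|\bigr|<k$ for every $S$, and density preservation follows from $|\pi[S]\cap n|=|S\cap\pi^{-1}[n]|$ together with $|B\cap n|\to\infty$. This buys a quantitative statement (a uniform $O(1)$ error) and also shows, essentially for free, that condition $\mathrm{B}$ is equivalent to the combinatorial condition ``$\pi^{-1}[n]\triangle n$ is uniformly bounded,'' which is the natural analogue of the Agnew--Garibay et al. characterization.

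Your sketch of $(2)\Rightarrow(1)$, however, has a genuine gap, and it is not merely a matter of unfinished bookkeeping: the specific choice $A\cap(M_k\cup N_k)=M_k$ cannot work, no matter how the padding $P$ is placed. With your interleaving ($M_k<N_k<P_k<M_{k+1}$ in $\omega$ and $\pi[N_k]<\pi[M_k]<\pi[P_k]<\pi[N_{k+1}]$ in the image, and $A_P$ of relative density $\tfrac12$ in $P$), the quantity $|A\cap n|-\tfrac12|B\cap n|$ rises by $\tfrac12 j_k$ as $n$ crosses $M_k$ and falls by $\tfrac12 j_k$ as $n$ crosses $N_k$, with no other $B$-elements in between; so at $n=\max M_k+1$ the domain ratio deviates from $\tfrac12$ by about $j_k/(2c_k)$, where $c_k$ is the cumulative $B$-count. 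On the image side, the deviation of $|\pi[A]\cap m|/|\pi[B]\cap m|$ at $m=\max\pi[N_k]+1$ is also about $j_k/(2c_k)$, with the \emph{same} cumulative count $c_k$, because padding images lie strictly after $\pi[M_k]$. The two oscillations are of identical order: if you make $j_k$ large enough (relative to $c_k$) to break $d_{\pi[B]}(\pi[A])$, you simultaneously break $d_B(A)$, and if you make $j_k$ small enough to save $d_B(A)$, the image ratio also converges to $\tfrac12$. The construction is perfectly symmetric, so it proves nothing. The paper breaks this symmetry differently: it puts exactly half of each of $M_i$ and $N_i$ into $A$, so that $d_B(A)=\tfrac12$ genuinely holds, but it chooses \emph{which} half of $N_i$ goes into $A$ so that $\pi[A\cap N_i]$ is front-loaded (an initial segment of $\pi[N_i]$ of size $\ell_i$), supplemented by an interleaved block to keep both the domain and the $N_i$-internal ratios close to $\tfrac12$. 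The image ratio then spikes in the interior of $\pi[N_i]$ without a matching spike in the domain, because the domain interleaving is balanced while the image ordering is skewed. That asymmetry---choose $A\cap N_i$ balanced in the natural order but skewed in the $\pi$-order---is the idea your proposal is missing, and without it the padding bookkeeping you flag as the ``technical heart'' cannot be made to close.
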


\begin{proof}
(1) $\Longrightarrow$ (2): Assume $\pi$ does not preserve density.
So there are $A \sub B \sub \omega$ such that $r: = d_B (A)$
is defined and $d_{\pi [B]} ( \pi [A])$ is distinct (possibly
undefined). Without loss of generality $\overline{d}_{\pi [B]}
(\pi [A] ) > d_B (A) = r$. Let $s > r$ be such that for infinitely
many $k$, ${ | \pi[A] \cap k | \over | \pi [B] \cap k | } \geq s$.
Let $\varepsilon : = { s-r \over 2} $, and choose $\ell^*$ such
that for all $\ell \geq \ell^*$, ${ |A \cap \ell | \over |B \cap \ell 
|} < r + \varepsilon$. Let $(k_i : i \in \omega)$ be an
increasing enumeration of $k$ with ${ | \pi[A] \cap k | \over | \pi [B] \cap k | } \geq s$
and such that $| \pi [B] \cap k_0 | \geq | B \cap \ell^*|$.
Let $(\ell_i : i \in \omega )$ and $(m_i : i \in \omega)$
be such that $m_i = | B \cap \ell_i | = | \pi [B] \cap k_i |$.
Since 
\[ { | \pi[A] \cap k_i | \over m_i } \geq s > { r+s \over 2} >
{ | A \cap \ell_i | \over m_i }\]
there must be $A_i \sub A \setminus \ell_i$ of size $> \varepsilon
\cdot  m_i$ such that $\pi [A_i] \sub k_i$ and $B_i \sub B \cap 
\ell_i$ of the same size such that $\pi [B_i] \sub \omega \setminus
k_i$. Hence $B_i < A_i$ and $\pi [A_i] < \pi [B_i]$ and condition $\mathrm{B}$ fails.

(2) $\Longrightarrow$ (1): Recursively define $(k_i : i \in \omega)$
such that $k_0 = 2$ and $k_{i+1} = 2^{i+1} \sum_{j \leq i} k_j$.
Let $\ell_i = \sum_{j\leq i} k_j$; so $\ell_0 = 0$ and $k_{i+1} =
2^{i+1} \cdot \ell_{i+1}$. Assume $\pi$ does not satisfy 
condition $\mathrm{B}$. Then there are finite subsets $N_i \sub \omega$
and $M_i \sub \omega$ such that $|N_i| = |M_i| = k_i$,
$M_i < N_i < M_{i+1}$, and $\pi[N_i] < \pi [M_i] < \pi [ N_{i+1}]$,
$i \in \omega$.
Let $A_i \sub N_i$ of size $\ell_i$ such that $\pi [A_i] $ is
an initial segment of $\pi [N_i]$. Let $B_i \sub N_i \setminus
A_i$ be arbitrary of size $\ell_i$. Let $k_0' = k_0 = 2$ and
$k_i' = (2^i - 2) \ell_i = k_i - 2 \ell_i$ for $i > 0$.
Let $(n_{i,j} : j < k_i')$ be the increasing enumeration of
$N_i \setminus (A_i \cup B_i)$ and let $(m_{i,j} : j < k_i)$
be the increasing enumeration of $M_i$. Let $B = \bigcup_{i
\in \omega} (N_i \cup M_i)$ and
\[ A = \bigcup_{i \in \omega} \left( A_i \cup \left\{ n_{i,2j} :
j < {k_i' \over 2} \right\} \cup \left\{ m_{i,2j} : j < {k_i \over 2} \right\}
\right) \]

We first check that $d_B (A) = {1 \over 2}$. Indeed, let $m \in \omega$.
Let $i$ be minimal such that $M_i < m$. If also $N_i < m$, we easily 
see that
\[ {1 \over 2 } \leq {| A \cap m| \over | B \cap m| } \leq { |B \cap m
| + 2\over 2 | B \cap m | } \]
Otherwise let $\bar m_i = \min N_i$ and note that $|B \cap \bar m_i|
= 2 \ell_i + k_i = (2^i + 2) \ell_i$ and, similarly, $|A \cap \bar m_i|
= (2^{i-1} + 1) \ell_i$. Then we have:
\[ { (2^{i-1} +1) \ell_i + { |N_i \cap m| -\ell_i \over 2 }
\over (2^i + 2 ) \ell_i + |N_i \cap m| } \leq
{  |A\cap m| \over | B \cap m| } \leq
{ (2^{i-1} +1) \ell_i + { |N_i \cap m| +\ell_i +2 \over 2 }
\over (2^i + 2 ) \ell_i + |N_i \cap m| } \]
Clearly, the upper and lower bounds converge in both
cases to ${1 \over 2}$ as $i$ goes to $\infty$.

On the other hand, it is easy to see that, letting $\bar a_i = \max \pi 
[A_i] + 1$ and $\bar n_i = \min  \pi [M_i]$, 
\[ { | \pi [A] \cap \bar a_i | \over | \pi [B] \cap \bar a_i |} =
{2 \ell_i \over 3 \ell_i} = {2 \over 3} \]
and
\[ { |\pi [A] \cap \bar n_i | \over | \pi [B] \cap \bar n_i | } =
{ \ell_i + {k_i \over 2} \over 2\ell_i + k_i } = {1 \over 2} \]
so that $\overline{d}_{\pi[B]} (\pi [A]) \geq {2 \over 3}$ 
and $\underline{d}_{\pi[B]} (\pi [A]) \leq {1 \over 2}$. Thus,
$\pi$ does not preserve density.
\end{proof}

These results show that the set of permutations preserving c.c. series and density
are actually distinct. We have no idea whether this means that we can also
distinguish the two concepts, rearrangement of c.c. series and of relative
density, on the level of cardinal invariants 
(see Question~\ref{ddversusrr}).


Still regarding the analogy between $\rrr_{X,Y}$
and $\ddd^\rel_{X,Y}$, in both cases it is natural
to expect these cardinals to be big if $Y$ is 
small in some sense. In the case of the former, one
has:

\begin{theorem}\label{cof}
If $Y \subseteq \mathbb{R}$ and $|Y|<\mathfrak{c}$, then $\mathfrak{rr}_{\RR,Y}  = \mathfrak{c}$. 
\end{theorem}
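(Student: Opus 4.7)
The plan is to split into the trivial bound $\rrr_{\RR,Y} \leq \cc$ and the substantive bound $\rrr_{\RR,Y} \geq \cc$. For the first, by the standing hypothesis on $(\RR, Y)$ we have $|Y| \geq 2$; so given any c.c.\ series $\sum_n a_n$ with sum $s$, pick any $y \in Y \sem \{s\}$ and use Riemann's theorem to find $\pi \in \Sym(\omega)$ with $\sum_n a_{\pi(n)} = y$. Thus $\Sym(\omega)$ itself witnesses $\rrr_{\RR, Y}$, and $\rrr_{\RR,Y} \leq |\Sym(\omega)| = \cc$.

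For the lower bound I would use a ``shift'' argument. Fix $\Pi \sub \Sym(\omega)$ with $|\Pi| < \cc$. Start from any fixed c.c.\ series $\sum_n a_n$ with sum $s$, and for each $d \in \RR$ form the one-term perturbation $\sum_n a^d_n$ where $a_0^d := a_0 + d$ and $a_n^d := a_n$ for $n \geq 1$; this is c.c.\ with sum $s + d$. For each $\pi \in \Sym(\omega)$ let $T(\pi) \in \RR \cup \{\pm\infty, \osc\}$ denote the value (or oscillation type) of $\sum_n a_{\pi(n)}$. The key observation is that, because $(a^d_n)$ differs from $(a_n)$ in a single coordinate, the partial sums of $\sum_n a^d_{\pi(n)}$ eventually exceed those of $\sum_n a_{\pi(n)}$ by exactly $d$; hence the value of the rearranged perturbed series is $T(\pi) + d$, with the conventions $\pm\infty + d = \pm\infty$ and $\osc + d = \osc$.

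Now $\pi \in \Pi$ witnesses the defining property of $\rrr_{\RR, Y}$ against $\sum_n a^d_n$ precisely when $T(\pi) + d \in Y$ and $T(\pi) + d \neq s + d$. Since $Y \sub \RR$, the first condition forces $T(\pi) \in \RR$, and the second reduces to $T(\pi) \neq s$ and $d \in Y - T(\pi)$. Therefore the set of ``bad'' $d$'s is
\[ B \;:=\; \bigcup_{\pi \in \Pi,\; T(\pi) \in \RR \sem \{s\}} \bigl(Y - T(\pi)\bigr), \]
a union of at most $|\Pi|$ translates of $Y$, of cardinality at most $|\Pi| \cdot |Y| < \cc$ (using $\kappa \cdot \lambda = \max(\kappa,\lambda)$ in the infinite case and triviality in the finite case). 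Pick any $d \in \RR \sem B$; then $\sum_n a^d_n$ is a c.c.\ series that $\Pi$ fails to witness, so $\Pi$ is not a witness for $\rrr_{\RR, Y}$. Since $|\Pi| < \cc$ was arbitrary, $\rrr_{\RR, Y} \geq \cc$.

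There is no real obstacle; the only substantive idea is that perturbing a single term by $d$ shifts \emph{all} rearrangement values uniformly by $d$, which decouples the action of $\Pi$ from the target set $Y$ and reduces the problem to the impossibility of covering $\RR$ by fewer than $\cc$ translates of a set of size $<\cc$.
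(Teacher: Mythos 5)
Your proof is correct and takes essentially the same approach as the paper: both generate a $\cc$-indexed family of c.c.\ series by adding an absolutely convergent perturbation to a fixed one (yours a single-term shift $a_0 \mapsto a_0+d$, the paper's a general summable series $\sum_n c^t_n$) and then show by a cardinality count that fewer than $\cc$ permutations cannot handle them all. The paper organizes the count as a pigeonhole on the target value in $Y$ followed by injectivity of $t\mapsto\pi^t$ on each fiber; your ``bad set'' $B$ with $|B|\leq|\Pi|\cdot|Y|<\cc$ is the same calculation viewed from the other side, and your explicit treatment of the trivial upper bound $\rrr_{\RR,Y}\leq\cc$ is a small bonus the paper leaves implicit.
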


\begin{proof}

Fix $Y \subseteq \mathbb{R}$ and let $\mathcal{F}$ be a family witnessing $\rrr_{\RR,Y}$. 
Also, let $\sum_n a_n$ be a c.c. series, say converging to $a$. 

For each $t \in \mathbb{R}$, let $\sum_n c_n^t$ be a series converging absolutely to $t-a$. Putting $a^t_n = a_n + c_n^t$, we get that $\sum_n a^t_n$ is a c.c. series converging to $t$. 
For each $t \in \mathbb{R}$, we can choose $\pi^t \in \mathcal{F}$ and $x^t \in Y$, $x^t \neq t$, such that $\sum_n a^t_{\pi^t (n)}$ converges to $x^t$. 

Now, $\mathbb{R} = \bigcup_{x \in Y} \{t \in \mathbb{R} : x^t=x\}$. Since $|Y|< \mathfrak{c}$, for every $\kappa < \mathfrak{c}$ there is $s \in Y$ such that the set $S_s :=\{t \in \mathbb{R}: x^t=s\}$ has cardinality strictly larger than $\kappa$. 

Let us show that the function $t\mapsto \pi^t$ is injective when restricted to any $S_s$,
which guarantees that $\mathcal{F}$ has cardinality $\mathfrak{c}$.

Indeed, let $t_1, t_2 \in S$ (i.e., $x^{t_i}=s$) and suppose $\pi^{t_1} = \pi^{t_2} =: \pi$. Then 
$$\left(\sum_n a_{\pi(n)} \right)+t_1-a=\sum_n a^{t_1}_{\pi(n)}=s=\sum_n a^{t_2}_{\pi(n)}= \left(\sum_n a_{\pi(n)} \right)+t_2-a$$
so $t_1=t_2$.
\end{proof}

It does not appear obvious how to
adapt the above line of reasoning to $\ddd_{(0,1),Y}^\rel$. Indeed, the fact that one can add an
absolutely convergent series to an arbitrary series without
essentially changing any information related to the
convergence of the latter, even after rearranging the terms,
is essential in the above proof. The set of 
infinite-coinfinite subsets of $\omega$ having asymptotic 
density lacks this structure. 

\begin{question}
Suppose $Y\subseteq(0,1)$
is such that $|Y|<\cc$. Is $\ddd^\rel_{(0,1),Y}=\cc$?
\end{question}

Still, one could reestablish the analogy by considering
the following: for a real sequence $(a_0, a_1,a_2,\dots)$, let
$$\mu(a_0 , a_1,a_2,\ldots)=\lim_{n\to\infty}\frac{a_0+\ldots+a_{n-1}}{n}$$
be its {\em asymptotic mean}, if the limit exists. The sequences
that have the same asymptotic mean regardless of the order
of the terms are the ones that converge. The following is
analogous to Riemann's Rearrangement Theorem:

\begin{theorem} \label{riemann-mean}

Let $r=(r_0,r_1,r_2,\ldots)$ be a sequence of real numbers having asymptotic mean.
Then,  there is $\pi\in S_\omega$ such that
 $\mu(r_{\pi(0)}, r_{\pi(1)},r_{\pi(2)},\ldots)=m$ if, and only if

$$\liminf(r)\leq m \leq\limsup(r).$$

\end{theorem}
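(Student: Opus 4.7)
The proof splits into the two directions of the biconditional.

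\textit{Necessity} is straightforward: if $\mu(r_{\pi(n)})=m$, then the standard Cesàro-style inequality gives $\liminf_n r_{\pi(n)} \leq m \leq \limsup_n r_{\pi(n)}$, and since for any real $c$ the set $\{n : r_{\pi(n)} > c\}$ is the $\pi^{-1}$-image of $\{n : r_n > c\}$ and hence is simultaneously infinite, one has $\limsup r_{\pi(\cdot)} = \limsup r$ and likewise for $\liminf$. Thus $\liminf r \leq m \leq \limsup r$. The argument adapts verbatim to the values $m = \pm \infty$.

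For \textit{sufficiency}, first dispense with the degenerate case: if $\liminf r = \limsup r$ then $(r_n) \to \ell = m$, every permutation still converges to $\ell$ (since $\pi^{-1}$ of any finite set is finite), and any $\pi$ works. Otherwise $\liminf r < \limsup r$, and I construct $\pi$ greedily in stages. For $m$ strictly interior, fix $\delta>0$ so that $B := \{n : r_n > m+\delta\}$ and $S := \{n : r_n < m - \delta\}$ are both infinite; at stage $k$ I first perform a \emph{priority step}, mapping the next position to the smallest index not yet in $\mathrm{ran}(\pi)$ (ensuring surjectivity), and then perform as many \emph{adjustment steps} as necessary: while $\Sigma_n := \sum_{i<n} r_{\pi(i)} > mn$, take the next unused element of $S$; while $\Sigma_n < mn$, take the next unused element of $B$. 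Endpoint cases $m = \liminf r$ or $m = \limsup r$ (and the infinite analogues) require only minor modifications: whichever of $B,S$ might fail to be infinite is replaced by the set of ``near-$m$'' indices $\{n : |r_n - m| < \varepsilon\}$ for $\varepsilon \to 0$, which is infinite by the liminf/limsup hypothesis, and for $m=\pm\infty$ one side is dropped entirely.

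The \textit{main obstacle} is proving the mean convergence $\Sigma_n / n \to m$ when $(r_n)$ is unbounded, because a single priority step at stage $k$ can jolt $\Sigma_n - mn$ by the large amount $r_k - m$. The key input here is that $\mu(r)$ existing finitely forces $r_n / n \to 0$ (a Kronecker-type identity following from $r_n = (n+1)\mu_{n+1} - n\mu_n$), so the jolt is $o(k)$. Choosing $B$- and $S$-adjustment indices with $|r_n - m|$ bounded -- which is immediate when $(r_n)$ is bounded, and otherwise requires reserving the indices of very large $|r_n - m|$ for priority steps and using only the bounded-value subfamilies, which remain infinite for $\delta$ large enough -- each adjustment alters $\Sigma_n - mn$ by at least $\delta$ in the correcting direction, so $o(k)$ adjustments suffice to restore $|\Sigma_n - mn|$ to $O(1)$ after each stage. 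Consequently the total number of positions assigned through stage $k$ is $k + o(k)$, the error $\Sigma_n - mn$ stays in a window of width $o(n)$, and $\Sigma_n/n \to m$ as required.
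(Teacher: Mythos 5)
Your greedy priority/adjustment construction is a genuinely different route from the paper's, which extracts subsequences of $r$ converging to $\liminf r$ and $\limsup r$, interleaves them Riemann-style to get mean $m$, and then inserts the remaining terms of $r$ into a sufficiently sparse set of positions via Lemma~\ref{sparsesets}; and the observation that finite $\mu(r)$ forces $r_n/n\to 0$ is a nice ingredient that the paper never isolates. However, your handling of the unbounded case contains a false claim, namely that after ``reserving the indices of very large $|r_n-m|$ for priority steps,'' the ``bounded-value subfamilies'' of $B$ and $S$ ``remain infinite for $\delta$ large enough.'' Take $r_n=2^k$ when $n=2^{2^k}$ (for $k\geq 1$) and $r_n=0$ otherwise; then $\mu(r)=0$, $\liminf r=0$, $\limsup r=+\infty$, yet for any finite $m>0$ the set $B=\{n:r_n>m+\delta\}$ consists of indices carrying pairwise distinct powers of two, so every bounded-value subfamily of $B$ is finite. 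Your error estimate then collapses, because every $B$-adjustment you are forced to use is itself unbounded and overshoots by an amount comparable to the running index.

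In fact no rearrangement of that $r$ has any asymptotic mean in $(0,\infty)$: listing the nonzero terms of $\pi[r]$ in order of position as $w_1,w_2,\ldots$ and setting $W_n=\sum_{i\leq n}w_i$, convergence of the Cesàro means to some $m\in(0,\infty)$ forces $w_{n+1}/W_n\to 0$, yet $w_{n+1}>W_n$ whenever $w_{n+1}$ is a new maximum among the $w_i$, which happens infinitely often. So the theorem as stated is actually false for unbounded $r$, and this is not a defect of your proof alone: the paper's ``clearly $\lim=m$'' for the Riemann-style interleaving is equally unjustified when $\limsup r=+\infty$, since adding one large term of $v$ at position $n$ overshoots the mean by roughly $v_k/n$, which need not tend to zero. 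Both your argument and the paper's become correct once one assumes $r$ bounded (your construction then works, and your endpoint handling amounts to a re-encoding of the dilution idea in Lemma~\ref{sparsesets}); the honest fix is to add a boundedness hypothesis to the statement, not to try to patch either proof without one.
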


We use the fact that if a real sequence has asymptotic mean and the terms of another 
real sequence are put in a sufficiently sparse set of indices, 
then the mean does not change:

\begin{lemma}  \label{sparsesets}

Let $(x_0,x_1,\ldots)$ and $(y_0,y_1,\ldots)$ be sequences of real numbers such that $\mu(x_0,x_1,\ldots)=m$. Then there is a set 
$A=\{a_0<a_1<\ldots\}\subseteq\omega$ such that, if 
$\omega\setminus A=\{b_0<b_1<\ldots\}$ and $(z_0,z_1,\ldots)$ is
defined by

\[ z_k = \left\{ \begin{array}{ll} {x_n} & \mbox{ if } k = b_n \\
    {y_n} & \mbox{ if } k = a_n \end{array} \right. \]
then $\mu(z_0,z_1,\ldots)=m$.
    
\end{lemma}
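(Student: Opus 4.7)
The plan is to make $A=\{a_0<a_1<\ldots\}$ so sparse that the values $y_n$ inserted at positions $a_n$ contribute negligibly to the Ces\`aro averages of $(z_k)$, no matter how wildly the $y_n$ grow. Writing $S_n=y_0+y_1+\ldots+y_{n-1}$, I would construct the $a_n$ recursively, requiring at stage $n$ that $a_n>a_{n-1}$ and
\[ a_n \;\geq\; 2^n\bigl(|S_{n+1}|+n+1\bigr). \]
This is possible at every stage since the right-hand side depends only on finitely many of the $y_j$. Set $s_N:=|A\cap N|$, so that $s_N=n+1$ precisely when $a_n<N\leq a_{n+1}$, and let $T_n:=x_0+\ldots+x_{n-1}$, so that $T_n/n\to m$ by hypothesis.

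Next, I would split the partial sum as
\[ \frac{1}{N}\sum_{k<N} z_k \;=\; \frac{N-s_N}{N}\cdot\frac{T_{N-s_N}}{N-s_N} \;+\; \frac{S_{s_N}}{N}, \]
and verify each piece behaves correctly. When $s_N=n+1$, the growth clause gives both $s_N/N\leq (n+1)/a_n\leq 2^{-n}$ and $|S_{s_N}|/N\leq |S_{n+1}|/a_n\leq 2^{-n}$. Since $A$ is infinite, $n\to\infty$ as $N\to\infty$, so the second summand above tends to $0$, while $(N-s_N)/N\to 1$ and $N-s_N\to\infty$. The Ces\`aro hypothesis on $(x_n)$ then forces the first summand to tend to $1\cdot m=m$, and combining both pieces yields $\mu(z_0,z_1,\ldots)=m$.

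The only genuinely delicate point is that the $y_n$ are not assumed to be bounded or even to have a well-defined mean, so a bare density-zero choice of $A$ would not suffice; the clause $a_n\geq 2^n|S_{n+1}|$ is designed precisely to neutralize any growth of $|S_n|$ and is what makes the second summand of the decomposition vanish. Once this clause is in place, the rest is a routine decomposition of a Ces\`aro average, and I expect no further obstacles.
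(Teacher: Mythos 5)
Your argument is correct and follows essentially the same route as the paper: both decompose the Ces\`aro average of $(z_k)$ into an $x$-part and a $y$-part and then choose $A$ so sparse that the $y$-contribution $S_{s_N}/N$ and the ratio $s_N/N$ both tend to zero, so that the $x$-part carries the limit. The paper leaves the sparsity condition implicit (``choose $\ell(N)$ to grow so slowly that \dots''), whereas your explicit recursion $a_n\geq 2^n\bigl(|S_{n+1}|+n+1\bigr)$ simply spells out a concrete valid instance of that choice and verifies the required bounds.
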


\begin{proof}

Let the $N$-th partial sum of $(z_0,z_1,\ldots)$ be
$$\sum_{k=0}^{N-1} z_k=\sum_{k=0}^{n-1} x_k+\sum_{k=0}^{\ell-1} y_k.$$
Note that
$$\frac{1}{N}\sum_{k=0}^{N-1} z_k-\frac{1}{n}\sum_{k=0}^{n-1} x_k=\frac{1}{N}\sum_{k=0}^{\ell-1} y_k-\frac{\ell}{N}\left(\frac{1}{n}\sum_{k=0}^{n-1} x_k\right).$$

Choosing the indices of $A$ amounts to choosing $\ell=\ell(N)$ for
each $N\in\omega$. The above equality shows that the conclusion of the Lemma will hold if $\ell(N)$ is 
such that 
$$\lim_{N\to\infty}\frac{1}{N}\sum_{k=0}^{\ell-1} y_k=\lim_{N\to\infty}\frac{\ell}{N}=0.$$
It is clearly possible to choose $\ell(N)$ to grow so slowly that
the above is true.
\end{proof}

\begin{proof}[Proof of Theorem~\ref{riemann-mean}]

The ``only if" part is clear.

Now, suppose $s=\liminf(r)$, $S=\limsup(r)$, and $s\leq m\leq S$.

If $m=s$, fix a subsequence $(x_0,x_1\ldots)$ of $r$
such that $\lim_{n\to\infty}x_n=s$ and let the remaining terms of $r$ be $(y_0,y_1\ldots)$. The sequence $(z_0,z_1\ldots)$ given
by the above Lemma is a rearrangement of $r$ with mean equal to $s$. The
case $m=S$ is treated similarly.

Now, suppose $s<m<S$. Let
$u=(u_0,u_1\ldots)$ and $v=(v_0,v_1,\ldots)$ be subsequences
of $r$ converging to $s$ and $S$ respectively. Without loss
of generality, suppose the indices of $u$ and $v$ are disjoint and the remaining terms of $r$ form the subsequence $z=(z_0,z_1\ldots)$.

First, we form a sequence $(x_0,x_1\ldots)$ using only terms from $u$ and $v$, in a way that is similar to 
the traditional proof of Riemann's Theorem:
Always respecting the order, keep adding terms from $v$ until 
the first time $\frac{x_0+\ldots+x_{n-1}}{n}>m$. This is possible,
since $v$ converges to $S>m$, Then keep adding terms from
$u$ until the first time $\frac{x_0+\ldots+x_{l-1}}{l}<m$, which
is possible, since $x$ converges to $s<m$. Proceeding in this way, clearly
$\lim_{n\to\infty}\frac{x_0+\ldots+x_{n-1}}{n}=m$.

Now, put the remaining terms $z_1,z_2\ldots$ in the
sparse set of indices $A$ given by the Lemma to get a permuted sequence $c=(c_0,c_1\ldots)$ such that $\mu(c)=m$.
\end{proof}

\begin{definition}
Let $\mathcal{D}$ be the set of sequences of real numbers having an asymptotic
mean. $\mathfrak{mm}$
is the minimal cardinality of a family $\mathcal{F}\subseteq \Sym(\omega)$
such that, for every $a\in\mathcal{D}$, there is $\pi\in\mathcal{F}$ such that $\mu(\pi[a])\neq\mu(a)$.
Analogously, one can define $\mathfrak{mm}_{X,Y}$ as above.
\end{definition}

The above definition clearly implies $\mathfrak{mm}\geq\ddd$. An argument similar to the first (easier) part of the proof of Theorem ~\ref{density-nonmeager} gives $\mathfrak{mm}\leq\non(\mathcal{M})$, so 
 $\mathfrak{mm}=\ddd=\non(\mathcal{M})$. Still, one can define the variants $\mathfrak{mm}_{X,Y}$, using the obvious definitions, and ask how
these cardinal behave in comparison to $\ddd_{X,Y}$. For instance, in direct analogy to
Theorem \ref{cof}, one has:

\begin{theorem}
If $Y \subseteq \mathbb{R}$ and $|Y|<\mathfrak{c}$, then $\mathfrak{mm}_{\mathbb{R},Y}  = \mathfrak{c}$. 
\end{theorem}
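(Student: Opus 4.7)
The proof will closely mirror that of Theorem~\ref{cof}, with ``c.c.\ series'' replaced by ``sequences admitting an asymptotic mean'' and ``absolutely convergent series'' replaced by ``convergent sequences''. The role that padding with absolutely convergent corrections plays in Theorem~\ref{cof} is here played by adding a constant (or more generally, a convergent) correction sequence, since the mean of a convergent sequence equals its limit and is invariant under permutations, so adding such a sequence shifts the mean predictably and in a way stable under rearrangement.

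Concretely, fix once and for all a sequence $r = (r_0, r_1, \ldots)$ with $\mu(r) = m$ and $\liminf r < \limsup r$, so that by Theorem~\ref{riemann-mean} some permutation changes the mean of $r$. For each $t \in \mathbb{R}$, define $r^t = (r_0 + (t-m), r_1 + (t-m), \ldots)$; then $\mu(r^t) = t$ and $\liminf r^t < \limsup r^t$, so $r^t$ is admissible for the definition of $\mathfrak{mm}_{\mathbb{R}, Y}$. The key observation is that for every $\pi \in \Sym(\omega)$, the constant sequence $(t-m, t-m, \ldots)$ is unaffected by $\pi$, and by linearity of asymptotic means (whenever the relevant means exist), $\mu(\pi[r^t])$ exists if and only if $\mu(\pi[r])$ exists, and in that case
\[ \mu(\pi[r^t]) = \mu(\pi[r]) + (t - m). \]

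Now let $\mathcal{F} \subseteq \Sym(\omega)$ witness $\mathfrak{mm}_{\mathbb{R}, Y}$. For each $t \in \mathbb{R}$, pick $\pi^t \in \mathcal{F}$ with $\mu(\pi^t[r^t]) \in Y$ and $\mu(\pi^t[r^t]) \neq t$, and set $x^t := \mu(\pi^t[r^t]) \in Y$. By the key observation, $\mu(\pi^t[r])$ exists and equals $x^t - (t - m)$. Thus, exactly as in the proof of Theorem~\ref{cof}, if $t_1, t_2$ are such that $\pi^{t_1} = \pi^{t_2} = \pi$ and $x^{t_1} = x^{t_2} = s$, then
\[ \mu(\pi[r]) + (t_1 - m) = s = \mu(\pi[r]) + (t_2 - m), \]
whence $t_1 = t_2$. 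So the assignment $t \mapsto (\pi^t, x^t)$ is injective from $\mathbb{R}$ into $\mathcal{F} \times Y$, yielding $|\mathcal{F}| \cdot |Y| \geq \mathfrak{c}$; since $|Y| < \mathfrak{c}$ by hypothesis, we conclude $|\mathcal{F}| \geq \mathfrak{c}$, and the reverse inequality $\mathfrak{mm}_{\mathbb{R}, Y} \leq \mathfrak{c}$ is trivial.

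There is essentially no obstacle here: the only point requiring a moment's thought is the linearity/permutation-invariance underlying the key observation, which is immediate because $(t - m, t - m, \ldots)$ is constant (so any rearrangement of it still has mean $t - m$) and asymptotic means are additive when both summands exist. The proof is thus a direct transcription of the reasoning in Theorem~\ref{cof} into the setting of Cesàro means.
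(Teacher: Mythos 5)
Your proof is correct and takes essentially the same approach as the paper: the paper also transports the argument of Theorem~\ref{cof} by adding to a fixed sequence, for each $t$, a sequence converging to $t-a$; you use the constant sequence $(t-m, t-m, \ldots)$, which is a special case that makes the permutation-invariance of the correction term immediate rather than needing a sentence of justification.
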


\begin{proof}

The argument is analogous to the one in the proof of Theorem~\ref{cof}. We point out the necessary adaptations.

Fix $Y \subseteq \mathbb{R}$ and let $\mathcal{F}$ be a family witnessing $\mathfrak{mm}_{\RR,Y}$. 
Also, let $(a_0,a_1\ldots)$ be a sequence of real numbers, with asymptotic mean equal to $a$. 

For each $t \in \mathbb{R}$, let $(c_0^t,c_1^t\ldots)$ be sequence of real numbers converging to $t-a$. Putting $a^t_n = a_n + c_n^t$, we get that $(a_0^t,a_1^t\ldots)$ has asymptotic mean equal to $t$. 
For each $t \in \mathbb{R}$, we can choose $\pi^t \in \mathcal{F}$ and $x^t \in Y$, $x^t \neq t$, such that $(a^t_{\pi^t (0)},a^t_{\pi^t (1)}\ldots)$ has mean equal to $x^t$. 

Now, the proof proceeds exactly like in the proof of Theorem~\ref{cof}.
\end{proof}







\end{document}